\makeatletter \renewcommand{\@algocf@capt@plain}{above} \makeatother				
\let\@@magyar@captionfix\relax\makeatother 		
\newcommand{\norm}[2]{\bigl\| #1 \bigr\| _{#2}}
\newcommand{\pairing}[2]{\bigl\langle #1 , #2 \bigr\rangle}
\newcommand{\argmin}{\mathop{\mathrm{argmin}}}
\newcommand{\iprd}[2]{\left( #1 , #2 \right)}
\newcommand{\LL}{\mathcal{L}}
\newcommand{\LLinv}{{\mathcal{L}^{-1}}}
\newcommand{\lap}{\Delta}
\newcommand{\HH}{\mathbb{H}}
\newcommand{\RR}{\mathbb{R}}
\newcommand{\NN}{\mathbb{N}}
\newcommand{\ZZ}{\mathbb{Z}}
\theoremstyle{plain}
\newtheorem{thm}{Theorem}[section]
\newtheorem{prop}[thm]{Proposition}
\newtheorem{cor}[thm]{Corollary}
\newtheorem{lem}[thm]{Lemma}
\theoremstyle{definition}
\newtheorem{defn}[thm]{Definition}
\newtheorem{rmk}[thm]{Remark}
\newcommand{\ermk}{\hfill\ensuremath{\blacksquare}}
\newcommand{\goesto}{\rightarrow}
\newcommand{\half}{\frac{1}{2}}
\newcommand{\littleo}[1]{\operatorname{o}\bigl( #1 \bigr)}
\newcommand{\bigo}[1]{\operatorname{O} \bigl( #1 \bigr)}
\newcommand{\oneover}[1]{\frac{1}{#1}}
\newcommand{\emb}{\hookrightarrow}
\newcommand{\per}{{\textup{per}}}
\newcommand{\diff}{{\textup{d}}}
\newcommand{\fraki}{{\mathfrak{i}}}
\newcommand{\sz}{{\sqrt{s}}}
\newcommand{\x}{{\mathbf{x}}}
\newcommand{\gridfn}{\mathbb{H}_N }
\newcommand{\bfm}{{\mathbf{m}}}
\newcommand{\bfr}{{\mathbf{r}}}
\newcommand{\bfs}{{\mathbf{s}}}
\newcommand{\gm}{{\Omega_N}} 
\newcommand{\muhat}{{\hat \mu}}
\newcommand{\Lhat}{{\hat L}}
\DeclareMathOperator{\identity}{id}
\numberwithin{equation}{section}
\begin{document}
\bibliographystyle{plainnat}
	
\title[Preconditioned accelerated gradient descent]{Preconditioned accelerated gradient descent methods for locally Lipschitz smooth objectives with applications to the solution of nonlinear PDEs}

\author{Jea-Hyun Park}
\email[J.-H. Park]{jpark79@vols.utk.edu}

\author{Abner J. Salgado}
\email[A.J. Salgado]{asalgad1@utk.edu}

\author{Steven M. Wise}
\email[S.M. Wise]{ swise1@utk.edu}

\address{Department of Mathematics, The University of Tennessee, Knoxville, TN 37996, USA} 

	\begin{abstract}
	We develop a theoretical foundation for the application of Nesterov's accelerated gradient descent method (AGD) to the approximation of solutions of a wide class of partial differential equations (PDEs).
    This is achieved by proving the existence of an \emph{invariant set} and exponential convergence rates when its preconditioned version (PAGD) is applied to minimize \emph{locally Lipschitz} smooth, strongly convex objective functionals.
    We introduce a second-order ordinary differential equation (ODE) with a preconditioner built-in and show that PAGD is an explicit time-discretization of this ODE, which requires a natural time step restriction for energy stability. At the continuous time level, we show an exponential convergence of the ODE solution to its steady state using a simple energy argument.  At the discrete level, assuming the aforementioned step size restriction, the existence of an invariant set is proved and a matching exponential rate of convergence of the PAGD scheme is derived by mimicking the energy argument and the convergence at the continuous level. Applications of the PAGD method to numerical PDEs are demonstrated with certain nonlinear elliptic PDEs using pseudo-spectral methods for spatial discretization, and several numerical experiments are conducted. The results confirm the global geometric and mesh size-independent convergence of the PAGD method, with an accelerated rate that is improved over the preconditioned gradient descent (PGD) method.
	\end{abstract}

\keywords{Preconditioning, Nesterov Acceleration, Momentum Method, Convex Optimization, Nonlinear Elliptic Partial Differential Equations, Pseudo-Spectral Methods, Lyapunov.}

\subjclass[2010]{65B99, 
65J08, 
65N35, 
65K10
}
\maketitle

	\section{Introduction}

The purpose of this work is to broaden the context in which a  well-known and efficient algorithm for unconstrained convex minimization, the so-called \emph{Nesterov's accelerated gradient descent} (AGD) scheme, can be utilized and, further, to  shed some light on its convergence properties. This method iteratively finds approximations to the solution of the following optimization problem: given $G : \HH \to \RR$, find
	\[
x^* = \argmin \left\{ G(x) \, \middle| \, x \in \HH \right\}.
	\]
Here, and in what follows, $\HH$ is a real, separable Hilbert space with inner product $(\, \cdot\, ,\, \cdot\, )_\HH$ and the so-called \emph{objective} functional, $G$, is assumed to be strongly convex and \emph{locally Lipschitz} smooth; see Section~\ref{sec:prelim} for definitions and notation. We immediately comment that the assumptions on the objective guarantee the existence and uniqueness of a minimizer (e.g., \citep[Theorem 7.4-4, Theorem 8.2-2]{ciarlet89}). 

Convex minimization is ubiquitous, and our main interest in this problem comes from the fact that many important nonlinear partial differential equations (PDEs) can be viewed as the Euler equations of certain convex objective functions. 
For example, the classical minimal surface problem (see \citep{evans2010partial}) and the $p$--Laplacian equation (see \citep{barrett93}) have this structure, just to name a few. However, the method discussed in this work is particularly powerful for semilinear PDEs. For example, time discretizations of many important models in material science often end up involving such problems, e.g., via the convex splitting technique (see \citep{Wise2018FCH}). In a related context, the current explosion of interest in statistical learning has drawn the attention of practitioners to so-called first order schemes, i.e., those that only require knowledge of first order derivatives, which is suitable in dealing with large data sets. These considerations are important for solving nonlinear PDE as well. One of the main thrusts of this research is to show that Nesterov's accelerated schemes, which are popular in statistical learning, can be utilized as fast solvers for nonlinear PDE once we resolve some nontrivial, technical difficulties specific to such problems.

The first and most na\"ive approach to find $x^*$ would be to appeal directly to the first order necessary (and, in this context, sufficient) optimality condition, namely, the Euler equation:
	\begin{equation}
	\label{eq:FOcondition}
G'(x^*) = 0,
	\end{equation}
where $G'$ denotes the Fr\'echet derivative of $G$. In the examples that we have in mind, however, this requires the simultaneous solution to a very large number of nonlinear equations, and the direct solution of the system is not feasible in practice. Other approaches better suited for minimization must be constructed. According to \citep{bertsekas1999nonlinear}, iterative methods for minimizing functionals date back in 1847 when Cauchy proposed the so-called gradient descent method (GD). The solution to \eqref{eq:FOcondition} can be seen as the steady state of the \emph{gradient flow}
\begin{align}
\label{eq:GradFlow}
  X(0)=x_0, \qquad \qquad \dot X(t)=-G'(X(t)), \ t>0.
\end{align} 
Here $x_0 \in \HH$ is arbitrary and, in the second equation, we are implicitly identifying the dual space of $\HH$, denoted by $\HH'$, with $\HH$ itself. Under the assumptions we have imposed on the objective $G$, it is possible to show that this flow satisfies $X(t) \to x^*$ as $t\to \infty$, see \citep[Theorem 2.4]{wibisono2016}. The idea of GD is to approximate the solution to this flow via a \emph{forward Euler} time discretization with a fixed step size $s$: given $x_0 \in \HH$, for $k \geq 0$, find $x_{k+1}$ satisfying
	\begin{equation}
	\label{eq:GD}
x_{k+1} = x_k - sG'(x_k).
	\end{equation}
While this idea seems straightforward, more in-depth discussions on this method started only in the 1960s, where some practical step size rules and convergence analyses were established. It was shown that if the objective functional is convex  and Lipschitz smooth, then GD converges to the minimizer, $x^*$, and it exhibits a first order rate of convergence in the objective. Here, and in what follows, by an \emph{$n^{\rm th}$ order (algebraic) convergence} in the objective, we mean that $G(x_k)-G^*\le \bigo{1/k^n}$, as $k\goesto \infty$, where $G^*=G(x^*)$ is the minimum of $G$. By an \emph{exponential} or a \emph{geometric convergence} in the objective we mean that $G(x_k)-G^*\le \bigo{r^k}$, as $k\goesto \infty$, for some $r\in (0,1)$. In the latter case, we call $r$ the rate of (exponential) convergence. It can further be shown that, if the objective is, in addition, strongly convex, then the rate of convergence is exponential, and that it matches the rate of convergence of the solution of \eqref{eq:GradFlow} to $x^*$. (See \citep[Theorem 2.1.15]{nesterov_2014} or Remark~\ref{rmk:rate-match}). Some physical intuition for the evolution of the solution to \eqref{eq:GradFlow} is provided in Section \ref{sec:ODE-model}. See, in particular, Remark \ref{rmk:ODE-phys-interp}.

Evidently, all considerations regarding convergence are subject to the norm $\norm{\, \cdot \, }{\HH}$. It is possible to improve the convergence rate by using an equivalent norm, through which the level sets of the objective $G$ look ``more circular.''  In the numerical linear algebra and numerical PDE communities, this is commonly known as \emph{preconditioning}. In the context of \eqref{eq:GradFlow} and GD, this is achieved by introducing an operator $\LL:\HH\goesto \HH'$ and considering the evolution of $\dot X(t)=-\LLinv G'(X(t))$. Notice that we no longer implicitly identify $\HH'$ with $\HH$. The time-discrete counterpart of \eqref{eq:GD} is known as the \emph{preconditioned gradient descent method} (PGD) and is as follows: given $x_0 \in \HH$, for $k \geq 0$, find $x_{k+1}$ such that
\[
x_{k+1}=x_k-s \LLinv G'(x_k).
\]
If the preconditioner is suitably chosen, then the convergence rate of GD can be substantially improved (see \citep{feng2017preconditioned}). Note that we will tacitly assume in the sequel that $\mathcal{L}$ is independent of the iteration index $k$. 
We remark that Newton's method may be viewed as a kind of \emph{generalized} preconditioned gradient descent method if we assume that $G$ is twice Fr\'{e}chet differentiable and allow for the possibility that the preconditioner can change at each iteration. In particular, Newton's method is expressed as
\[
G''(x_k)\left(x_{k+1} -x_k\right) = - G'(x_k)  =: r_k ,	
\]
where $G''(x_k)$ is the second Fr\'{e}chet derivative of $G$, and $r_k$ is the so-called residual. Then, Newton's method is a generalized preconditioned gradient descent method for which the preconditioner satisfies $s\mathcal{L}_k = G''(x_k)$. One of the difficulties with Newton's method is that the preconditioner constantly changes, in general, and must be recomputed and re-inverted at each iteration step, which can prove quite costly. Furthermore, $G''$ may not exist in all applications of interest. Indeed, in the sequel, we will not assume that $G''$ exists.

To improve the convergence rate of GD, \citet{nesterov1983} suggested a scheme that accelerates the GD method. For convex and Lipschitz smooth objectives the Nesterov's accelerated gradient descent (AGD) scheme achieves a second order convergence rate. Later, he showed that if the objective is, in addition, strongly convex, then AGD achieves a faster exponential convergence rate than GD (see \citep[Theorem 2.1.15]{nesterov_2014}).

However, while the GD scheme has a strong physical intuition behind it, it is not completely clear what mechanism is at play to provide an acceleration in the AGD scheme. Some attempts have been made to understand this in the literature.
	\citet{Attouch2000} studied asymptotic behaviors of the solutions to a heavy ball system (similar to \eqref{the-IVP}) and showed their convergence to minimizers (if they exist) of locally Lipschitz objectives that are bounded below at the continuous time level.  
\citet{Goudou2009} looked into a similar system with quasiconvex, locally Lipschitz objectives at the continuous time level and the convergence of its implicit discretization, what they call \emph{proximal inertial algorithm}.  
	Apparently, \citep{su} is the first work that explains the acceleration happening in AGD both quantitatively and intuitively and inspired many researchers including us. For convex, Lipschitz smooth objectives, they were able to show that the solutions to a second order ODE $\ddot X +\frac{3}{t} \dot X +\nabla G(X)=0$ converges to the set of minimizers of $G$ quadratically fast, as $t\to \infty$, and a matching, discrete convergence rate was established for a version of AGD.  
 \citet{wibisono2016} took a similar approach in more generality in the language of \emph{Bregman Lagrangian flow}. However, these two works did not explain the exponential acceleration for the strongly convex objectives. This limitation was one of the motivations of our work and removing it is one of the goals of this paper.  
	Recently, there have appeared more works that address the same issue and provide more general or unifying frameworks. For (globally) $L$--smooth, $\mu$--stronlgy convex objectives (see Section~\ref{sec:prelim} for definitions and notation),  the best known convergence rates of AGD and the associated ODE are $G(x_k)-G^*\le \bigo{{(1-\sqrt{\mu/L})^k}}$ as $k\to\infty$ and $G(X(t))-G^*\le \bigo{e^{-\sqrt \mu t}}$ as $t\to\infty$ respectively. Similarly, for (globally) $L$--smooth, convex objectives, $G(x_k)-G^*\le \bigo{1/k^2}$ as $k\to\infty$ and $G(X(t))-G^*\le \bigo{1/t^2}$ as $t\to\infty$ are the best known convergence rates respectively.  
Within the same framework as in \citep{wibisono2016}, but using a different Lyapunov function, \citet{Wilson2018} showed the best convergence rates for both convex and strongly convex cases at both continuous and discrete level.  
	\citet{Shi2018} looked into what they call \emph{high-resolution ODE} and provided a finer understanding about the momentum-type schemes and obtained similar results. The convergence rates for the strongly convex case that they derived were not the best. However, they were able to explain the difference in the performance of Polyak's momentum method and AGD at the continuous level.
 \citet{Siegel2019} also analyzed a system of ODEs to study a version of AGD and obtained the best known rate of convergence for the strongly convex case at both continuous and discrete levels. He also studied non-smooth but still strongly convex objectives and stochastic versions.  
	 \citet{Luo2019} obtained the same best convergence rates for all the four cases mentioned above using a single ODE system but using a time rescaling argument when dealing with the convex case.
\citet{Laborde2020} studied perturbed ODE systems and the corresponding version of AGD in the stochastic framework. As a byproduct, they obtained the same best convergence rates for the perturbed version of AGD with strongly convex objectives in the deterministic setting.
	There are also related works from a PDE point of view rather than convex optimization. \citet{Schaeffer2016} studied accelerated methods for nonlinear elliptic operators, which may not have a variational structure, i.e., the PDE may not have an appropriate objective. They also proposed similar methods for viscosity solutions.
\citet{Benyamin2018} and  \citet{Calder2019} studied \emph{PDE accelerations} that are similar to \citep{wibisono2016} in spirit and applied them to image processing and minimal surface obstacle problems respectively.

The work contained herein includes the following important contributions, in particular, from a numerical PDE point of view, which can be seen more clearly from the literature comparison of Appendix~\ref{sec:FancyTable}.  See Table \ref{tab:literature}.  
	\begin{enumerate}[1.]
 \item
 We prove all of our results under the more general assumption that the objective functional is \emph{locally Lipschitz} smooth. Almost all earlier works assume that the objective is \emph{globally Lipschitz} smooth (see e.g., \citep{nesterov_2014, wibisono2016, su, allen2014linear,Wilson2018,Laborde2020,Schaeffer2016,Luo2019,Siegel2019,Shi2018}). This is too restrictive to approximate solutions of nonlinear PDEs. If the objective functional associated with the PDE of interest grows just a bit faster than quadratic functionals (i.e., those of very mild nonlinearity), it violates the global Lipschitz condition and is beyond the theoretical guarantee. On the other hand, the local condition does not require anything outside of a certain bounded set so that much more nonlinear PDEs can be dealt with. Only a few works from a  dynamical system point of view (e.g., \citep{Attouch2000,Goudou2009}) assume local Lipschitz condition. However, those works address only convergence itself at the continuous time level and did not discuss discrete level analysis.  To the best of our knowledge, this is the first work that provides convergence rates under the local Lipschitz smoothness assumption at the continuous or discrete level.
 
 \item
 We prove the existence of an invariant set $\mathfrak B$ of the PAGD method. That is, every sequence generated by the scheme stays in a certain bounded set.  The local Lipschitz assumption is meaningful when it is furnished with an invariant set so that we have no restriction in exploiting the Lipschitz condition. We emphasize that this is not a trivial technicality. Unlike the gradient descent method, the accelerated methods are not descent methods. In fact, they oscillate. Thus, a simple sublevel set argument does not work. Even worse, they involve extrapolations of the main iterates. Consequently, a na\"ive attempt to obtain an invariant set leads to an impasse:
 to control the extrapolations, one wants to use the Lipschitz condition, but under the local Lipschitz condition, one cannot use it before proving that they are in a fixed bounded set. Again, to the best of our knowledge, our work is the first that addresses and resolves this issue. 
 
 \item
We provide a detailed discrete analysis for a nonlinear PDE.  All existing works mentioned before either did not discuss numerical examples or did not explain how concrete numerical examples fit the abstract framework, and they omitted whether their numerical examples satisfy the assumptions that they imposed.  In contrast, we show that our examples satisfy all the necessary assumptions.

 \item 
 We provide an intuitive explanation for the acceleration mechanism behind AGD for strongly convex and locally Lipschitz smooth objectives. Inspired by \citep{su}, we view AGD as a discretization of a certain second order ordinary differential equation (ODE) ---  we present how to discretize this ODE to obtain AGD ---  and show that the solution to this ODE converges exponentially fast to its stationary point, which is the minimizer of $G$. 
We also provide an energy based proof of the exponential convergence rate of PAGD. This proof mimics the analysis of the continuous counterpart that is previously developed and shows what dissipation mechanisms are at play to achieve the aforementioned acceleration. We also show that the rates of convergence of the ODE model and AGD match and those rates at the continuous and discrete level are both the best known rates.

  	\item
 We build a preconditioner into the problem itself (even at the continuous time level) to analyze the scheme with a preconditioner in an explicit way. This seems deceptively simple. After all, preconditioning is nothing but using a different norm, hence a numerical analysis in one norm implicitly suggests the possibility of a similar analysis in another norm. On the other hand, determining an effective preconditioning strategy is a nontrivial matter. Furthermore, it is our observation that preconditioning in the classical optimization setting, especially for problems related to data analysis and machine learning, is underutilized and is a potential growth area in the future. Likewise, preconditioning strategies related to spectral collocation methods applied to nonlinear PDE are uncommon, but, as we shall see, are effective and efficient solver tools.

	\end{enumerate}

This paper is organized as follows. In Section \ref{sec:prelim}, we summarize the notation, assumptions, and main tools that we will use. In Section \ref{sec:algorithms}, we introduce several numerical schemes that are closely related to our discussion and summarize their convergence rates. In Section \ref{sec:ODE-model}, we explore the connection between PAGD and a second order ODE and how this connection can help understand the acceleration behind PAGD intuitively. In Section \ref{sec:PAGD-conv}, we prove the existence of an invariant set for the PAGD scheme and its exponential convergence. We take an ODE inspired approach, whose intuition lies in the developments of Section \ref{sec:ODE-model}. In Section \ref{sec:numerical}, we illustrate the application of the PAGD method to the solution of some numerical PDEs. These numerical experiments show the improvement in convergence by both acceleration and preconditioning. Finally, in the Appendices, 
we provide the derivation of the initial value problem (IVP) which corresponds to the limiting case of PAGD and a specific discretization of the IVP that leads to PAGD.

	\section{Preliminaries}
	\label{sec:prelim}

Let us begin by introducing the setting, assumptions, and some basic properties of the objects that we are interested in. By $\HH$, we denote a real and separable Hilbert space with inner product $(\, \cdot\, ,\, \cdot\, )_\HH$ and associated norm $\norm{\, \cdot \, }{\HH}$. Since we will use other inner products and norms on $\HH$, for clarity, we will refer to $(\, \cdot\, ,\, \cdot\, )_\HH$ and $\norm{\, \cdot \, }{\HH}$ as the \emph{canonical inner product} and \emph{canonical norm}, respectively. The dual of $\HH$ is denoted by $\HH'$. Its canonical operator norm is denoted by $\norm{\, \cdot \, }{\HH'}$. For $v\in\HH$ and $f\in\HH'$, the symbol $\pairing{f}{v}$ represents their duality pairing, that is, $\pairing{f}{v}=f(v)\in\RR$.

A \emph{preconditioner} is defined by a linear operator $\LL:\HH\goesto\HH'$. Such an operator induces a bilinear form: for $x,y\in\HH$, 
	\begin{equation}
(x,y)_\LL = \pairing{\LL x}{y} =\LL[x](y) .
	\label{def-L-iprd}	
	\end{equation} 
We further assume that the bilinear form defined in \eqref{def-L-iprd} satisfies the following properties: there exist $C_1, C_2>0$ such that, for any $x,y\in\HH$,
	\begin{align}
(x,y)_\LL =(y,x)_\LL ,
\quad 
(x,y)_\LL \le C_2 \norm{x}{\HH} \norm{y}{\HH}, 
\quad 
C_1 \norm{x}{\HH}^2 \le (x,x)_\LL	. 
\label{L-iprd-coerc}
	\end{align}

Let us state some immediate, but important consequences without proof for the sake of brevity.

	\begin{prop}[properties of $\LL$]
Let $\HH$ be a real, separable Hilbert space with inner product $(\, \cdot\, ,\, \cdot\, )_\HH$, and suppose that $\LL:\HH\goesto\HH'$ is a linear mapping that satisfies \eqref{L-iprd-coerc}.  Then, $( \, \cdot \, , \, \cdot \,)_\LL$ is an inner product on $\HH$ and the object
	\[
\norm{x}{\LL}=\sqrt{(x,x)_\LL}, \qquad \forall \, x \in \HH,
	\]
is a norm, which is, in fact, equivalent to the canonical norm, $\norm{\, \cdot \,}{\HH}$. By the Riesz Representation Theorem, $\LL$ is invertible. The inverse is continuous and, in fact, it is just the Riesz Map with respect to the $\LL$--inner product, denoted $\mathfrak{R}_{\LL}$. We write $\LLinv = \mathfrak{R}_{\LL}:\HH'\to \HH$. The object
	\begin{equation}
  (f,g)_{\LLinv}=\pairing{f}{\LLinv g}, \quad \forall \, f,g\in \HH',
\label{def-Linv-inner-prod}
	\end{equation}
is an inner product on the Hilbert space $\HH'$ and the object
	\begin{equation}
\norm{f}{\LLinv}=\sqrt{(f,f)_{\LLinv}}=\sqrt{\pairing{f}{\LLinv f}}, \quad \forall \, f\in \HH',
\label{def-Linv-norm}
	\end{equation}
is a norm. The new norm on $\HH'$ is an operator norm in the sense that 
	\begin{equation}
\norm{f}{\LLinv}=\sup_{0\ne x\in\HH} \frac{\pairing{f}{x}}{\norm{x}{\LL}} = \sup_{\substack{ x\in\HH \\ \|x\|_\LL = 1}} \pairing{f}{x} , \quad \forall \, f\in \HH'.
	\label{L*-Linv-eq}
	\end{equation}
Finally, we have
	\begin{equation}
\norm{\LLinv f}{\LL}=\norm{f}{\LLinv}, \ \forall \, f\in \HH' \qquad \mbox{and} \qquad \|\LL x\|_\LLinv=\|x\|_\LL, \ \forall \, x\in \HH .
	\label{L-Linv-rel}	
	\end{equation} 
	\end{prop}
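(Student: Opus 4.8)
The plan is to proceed in the natural order of the statement, invoking only elementary Hilbert space theory and the Riesz Representation Theorem. First I would establish the claims on $\HH$. Bilinearity of $(\cdot,\cdot)_\LL$ is immediate from the linearity of $\LL$ and of the duality pairing; symmetry is the first relation in \eqref{L-iprd-coerc}; and the lower bound $C_1\norm{x}{\HH}^2 \le (x,x)_\LL$ in \eqref{L-iprd-coerc} gives positive definiteness, so $(\cdot,\cdot)_\LL$ is an inner product. That same lower bound, together with the upper bound $(x,x)_\LL = \pairing{\LL x}{x} \le C_2\norm{x}{\HH}^2$ obtained by taking $y=x$ in the second relation of \eqref{L-iprd-coerc}, yields $\sqrt{C_1}\,\norm{x}{\HH} \le \norm{x}{\LL} \le \sqrt{C_2}\,\norm{x}{\HH}$, so $\norm{\cdot}{\LL}$ is a norm equivalent to the canonical one. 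In particular $(\HH,(\cdot,\cdot)_\LL)$ is a Hilbert space, and its topological dual coincides, as a set and up to equivalent norms, with $\HH'$.

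Next I would construct the inverse. Fix $f\in\HH'$. Applying the Riesz Representation Theorem in $(\HH,(\cdot,\cdot)_\LL)$ produces a unique $u=\mathfrak{R}_\LL f\in\HH$ with $\pairing{f}{v}=(u,v)_\LL$ for all $v\in\HH$, and $\mathfrak{R}_\LL$ is a bounded linear bijection of $\HH'$ onto $\HH$. By \eqref{def-L-iprd} this reads $\pairing{f}{v}=\pairing{\LL u}{v}$ for all $v$, i.e. $\LL(\mathfrak{R}_\LL f)=f$; and for $x\in\HH$, using the identity with $f=\LL x$ gives $\pairing{\LL x}{v}=(\mathfrak{R}_\LL\LL x,v)_\LL=\pairing{\LL\mathfrak{R}_\LL\LL x}{v}$ for all $v$, whence $\mathfrak{R}_\LL\LL x=x$. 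Thus $\LL$ is invertible with continuous inverse $\LLinv=\mathfrak{R}_\LL$, establishing the Riesz-map identification.

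Then I would transfer this structure to $\HH'$. For $f,g\in\HH'$, taking $v=\LLinv g$ in the Riesz identity gives $(f,g)_{\LLinv}=\pairing{f}{\LLinv g}=(\LLinv f,\LLinv g)_\LL$, so bilinearity and symmetry of $(\cdot,\cdot)_{\LLinv}$ follow from those of $(\cdot,\cdot)_\LL$ and linearity of $\LLinv$, while positive definiteness follows since $(f,f)_{\LLinv}=\norm{\LLinv f}{\LL}^2\ge 0$, with equality only if $\LLinv f=0$, i.e. $f=0$. Hence $(\cdot,\cdot)_{\LLinv}$ is an inner product and $\norm{\cdot}{\LLinv}$ a norm. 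For \eqref{L*-Linv-eq}, I would apply the Cauchy--Schwarz inequality for $(\cdot,\cdot)_\LL$: $\pairing{f}{x}=(\LLinv f,x)_\LL\le\norm{\LLinv f}{\LL}\norm{x}{\LL}=\norm{f}{\LLinv}\norm{x}{\LL}$, so the supremum over $x\ne 0$ is $\le\norm{f}{\LLinv}$, with equality attained at $x=\LLinv f$ when $f\ne 0$; rescaling such $x$ to have $\norm{x}{\LL}=1$ gives the second form of the supremum. Finally, the relations in \eqref{L-Linv-rel} are one-line computations: $\norm{\LLinv f}{\LL}^2=(\LLinv f,\LLinv f)_\LL=\pairing{f}{\LLinv f}=\norm{f}{\LLinv}^2$, and $\norm{\LL x}{\LLinv}^2=\pairing{\LL x}{\LLinv\LL x}=\pairing{\LL x}{x}=(x,x)_\LL=\norm{x}{\LL}^2$.

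There is no genuine obstacle here; the argument is entirely standard. The only point requiring a moment's care is that the Riesz Representation Theorem must be applied to $\HH$ equipped with the $\LL$--inner product rather than the canonical one, which is legitimate precisely because of the norm equivalence established in the first step — and this is also what makes the symbol ``$f\in\HH'$'' unambiguous regardless of which equivalent norm one places on $\HH$.
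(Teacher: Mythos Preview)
Your proof is correct and complete. The paper does not actually provide a proof of this proposition --- it is introduced with the phrase ``Let us state some immediate, but important consequences without proof for the sake of brevity'' --- so there is nothing to compare against; your argument supplies exactly the standard Hilbert-space details the authors chose to omit.
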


	\begin{rmk}[no preconditioning]
By setting $\mathcal{L} = \mathfrak{R}_{\HH}^{-1}$, we can remove the preconditioning, where $\mathfrak{R}_{\HH}:\HH'\goesto\HH$ is the canonical Riesz map. Hence, PAGD is a generalization of AGD. \ermk

\end{rmk}

Our objective $G:\HH \to \RR$ will be assumed to be Fr\'echet differentiable at every point in $\HH$. We denote by $G'(x)\in\HH'$ the Fr\'echet derivative of $G$ at the point $x\in\HH$. Since the definition of Fr\'echet differentiability involves a norm, the actual derivative is possibly norm dependent. The following result shows that, actually, the definition is invariant as long as the  norms are equivalent.

\begin{prop}[equivalent norms]
Let $\HH$ be a real and separable Hilbert space with norm $\norm{\, \cdot \, }{\HH}$, and $G:\HH\goesto\RR$ be Fr\'echet differentiable at $x \in \HH$. Assume that $\interleave\cdot\interleave_\HH$ is another norm on $\HH$. If $\interleave \cdot \interleave_\HH$ is equivalent to $\norm{\, \cdot \, }{\HH}$, then $G$ is also  Fr\'echet differentiable at $x$ with respect to $\interleave\cdot\interleave_\HH$. Furthermore, the derivatives coincide.
\end{prop}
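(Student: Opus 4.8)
The plan is to unwind the definition of Fr\'echet differentiability and observe that the error term controlling differentiability is squeezed between two equivalent norms. First I would recall that, by hypothesis, $G$ is Fr\'echet differentiable at $x$ with respect to $\norm{\,\cdot\,}{\HH}$; that is, there exists $G'(x)\in\HH'$ such that
\[
\lim_{\norm{h}{\HH}\to 0} \frac{\snrm{G(x+h)-G(x)-\pairing{G'(x)}{h}}}{\norm{h}{\HH}} = 0 .
\]
Since $\interleave\cdot\interleave_\HH$ is equivalent to $\norm{\,\cdot\,}{\HH}$, there are constants $0<c\le C$ with $c\norm{h}{\HH}\le \interleave h\interleave_\HH\le C\norm{h}{\HH}$ for all $h\in\HH$. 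In particular $\interleave h\interleave_\HH\to 0$ if and only if $\norm{h}{\HH}\to 0$, so the limit may be taken along either norm.

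Next I would estimate the difference quotient in the new norm directly. For $h\neq 0$,
\[
\frac{\snrm{G(x+h)-G(x)-\pairing{G'(x)}{h}}}{\interleave h\interleave_\HH}
\le \frac{1}{c}\cdot\frac{\snrm{G(x+h)-G(x)-\pairing{G'(x)}{h}}}{\norm{h}{\HH}} ,
\]
and the right-hand side tends to $0$ as $\interleave h\interleave_\HH\to 0$ (equivalently $\norm{h}{\HH}\to 0$) by the assumed differentiability. This shows that the \emph{same} functional $G'(x)$ serves as the Fr\'echet derivative with respect to $\interleave\cdot\interleave_\HH$ (note $G'(x)$ is automatically bounded with respect to $\interleave\cdot\interleave_\HH$, since $\HH'$ as a set is insensitive to passing to an equivalent norm). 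Hence $G$ is Fr\'echet differentiable at $x$ with respect to $\interleave\cdot\interleave_\HH$, and the derivatives coincide.

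There is essentially no obstacle here; the only point requiring a word of care is the logical direction of the argument — one must verify that a candidate derivative computed in one norm \emph{still works} in the other, rather than re-deriving existence from scratch, and that uniqueness of the Fr\'echet derivative (which holds in any normed space) then forces the two derivatives to be equal. A brief remark could be added that this is why, throughout the paper, we may write $G'(x)$ unambiguously whether we regard $\HH$ as equipped with the canonical norm $\norm{\,\cdot\,}{\HH}$ or with the preconditioned norm $\norm{\,\cdot\,}{\LL}$.
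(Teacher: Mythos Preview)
Your argument is correct and is the standard one: equivalence of norms lets you sandwich the Fr\'echet difference quotient, the limits $\norm{h}{\HH}\to 0$ and $\interleave h\interleave_\HH\to 0$ are interchangeable, and the same bounded linear functional $G'(x)$ works in both norms. The paper itself states this proposition without proof, so there is nothing to compare against; your write-up would serve well as the omitted justification.
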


Notice that nothing is said about continuity in the previous statement. For convex functions, the continuity of the derivatives is automatic once the Fr\'echet differentiability is guaranteed (see \citep[p. 20 Corollary]{Phelps1993convexfn}).

\begin{prop}[continuity]
Let $\HH$ be a real and separable Hilbert space and $D \subset \HH$ be open and convex. If $G:D\goesto\RR$ is convex and Fr\'echet differentiable, then $x\mapsto G'(x)$ is norm continuous on $D$. 
\end{prop}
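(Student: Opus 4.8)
The plan is to prove the statement by an elementary self-contained argument resting on the subgradient inequality for convex functions and, crucially, on the uniform remainder estimate built into the definition of \emph{Fréchet} (rather than merely Gâteaux) differentiability; this last ingredient is precisely what promotes the conclusion from norm-to-weak continuity to genuine norm-to-norm continuity. (Alternatively one could simply cite the quoted corollary of Phelps, viewing $x \mapsto G'(x)$ as a single-valued, locally bounded, demiclosed monotone operator, but the direct route is short.)

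First I would establish two preliminaries. Since $G$ is Fréchet differentiable on $D$ it is in particular continuous on the open convex set $D$, and a finite convex function that is continuous at one point of an open convex set is locally Lipschitz throughout that set; hence, fixing $x_0 \in D$, there exist $r, L > 0$ with $G$ being $L$-Lipschitz on $B(x_0, r) \subset D$. Writing, for $x \in B(x_0, r/2)$ and any unit vector $h \in \HH$, the directional derivative $\pairing{G'(x)}{h}$ as a limit of difference quotients and using the Lipschitz bound gives $\pairing{G'(x)}{h} \le L$, and taking the supremum over such $h$ yields the local boundedness $\norm{G'(x)}{\HH'} \le L$ on $B(x_0, r/2)$. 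Second, convexity together with differentiability gives the subgradient inequality $G(y) \ge G(x) + \pairing{G'(x)}{y - x}$ for all $x, y \in D$ (from which monotonicity of $G'$ follows, though below I shall only use this inequality together with the Cauchy--Schwarz estimate and the local bound just derived).

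Then I would argue by contradiction. Fix $x_0 \in D$ and a sequence $x_n \to x_0$ in $D$; the goal is $\norm{G'(x_n) - G'(x_0)}{\HH'} \to 0$. If this fails, pass to a subsequence along which $\norm{G'(x_n) - G'(x_0)}{\HH'} \ge \alpha$ for some $\alpha > 0$, and choose unit vectors $h_n \in \HH$ with $\pairing{G'(x_n) - G'(x_0)}{h_n} \ge \alpha/2$. For any fixed $t > 0$ smaller than the distance from $x_0$ to the complement of $D$, and all $n$ large, the point $x_0 + t h_n$ lies in $D$; writing the subgradient inequality at $x_n$ with $y = x_0 + t h_n$ and at $x_0$ with $y = x_n$, and combining the two, one obtains
\[
G(x_0 + t h_n) - G(x_0) - t\,\pairing{G'(x_0)}{h_n} \ \geq\ \pairing{G'(x_0) - G'(x_n)}{x_n - x_0} + t\,\pairing{G'(x_n) - G'(x_0)}{h_n} \ \geq\ -2L\,\norm{x_n - x_0}{\HH} + \tfrac{\alpha}{2}\, t .
\]
On the other hand, Fréchet differentiability at $x_0$ applied to the increment $t h_n$, which has norm $t$, bounds the left-hand side above by $\tfrac{\alpha}{4}\, t$ as soon as $t \le \delta$ for a suitable $\delta = \delta(\alpha) > 0$. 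Choosing $t$ to be any fixed positive number not exceeding $\delta$ (and the distance above) gives $2L\,\norm{x_n - x_0}{\HH} \ge \tfrac{\alpha}{4}\, t > 0$ for all $n$ in the subsequence, contradicting $\norm{x_n - x_0}{\HH} \to 0$. Hence $G'(x_n) \to G'(x_0)$ in $\HH'$, and since $x_0$ and the sequence were arbitrary, $x \mapsto G'(x)$ is norm continuous on $D$.

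The only delicate point I anticipate is the interplay exploited in the last step: the preliminaries alone deliver, in a Hilbert space, only norm-to-weak continuity of $G'$; it is the uniform, norm-controlled remainder in the definition of Fréchet differentiability — absent in the Gâteaux setting — that renders the term $t\,\pairing{G'(x_n) - G'(x_0)}{h_n}$ untenable and forces the contradiction. Everything else is routine convex-analysis bookkeeping (continuity and local Lipschitz behaviour of convex functions, the subgradient inequality, Cauchy--Schwarz).
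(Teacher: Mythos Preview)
Your argument is correct. The key computation---combining the two subgradient inequalities to obtain the lower bound
\[
G(x_0 + t h_n) - G(x_0) - t\,\pairing{G'(x_0)}{h_n} \ \geq\ \pairing{G'(x_0) - G'(x_n)}{x_n - x_0} + t\,\pairing{G'(x_n) - G'(x_0)}{h_n}
\]
and then playing this off against the \emph{direction-uniform} Fr\'echet remainder at $x_0$---is exactly the classical mechanism, and you have identified correctly why Fr\'echet (rather than G\^ateaux) differentiability is essential: the $o(t)$ bound on the left must hold uniformly in the varying unit vectors $h_n$.

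The paper, by contrast, does not give a proof at all; it simply cites \citep[p.~20 Corollary]{Phelps1993convexfn}. Your self-contained argument is, in fact, essentially the proof underlying that citation (the result in Phelps is stated more generally for Banach spaces, but the core idea---local Lipschitzness of finite convex functions, the subgradient inequality, and the uniform Fr\'echet remainder---is the same). So what you gain over the paper is a transparent, elementary derivation rather than a black-box reference; what the citation buys is brevity and the observation that the result holds beyond the Hilbert setting.
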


The following two definitions provide a framework to describe the geometry of the graph of our objective functional. 

	\begin{defn}[Lipschitz smoothness]
Let $\HH$ be a real and separable Hilbert space, and $G:\HH\goesto\RR$ be Fr\'echet differentiable at every point. We say that $G$ is \emph{locally Lipschitz smooth} (with respect to $\LL$--norm) iff, for every bounded, convex set $B \subset \HH$, there exists a constant $L_B>0$ such that
	\begin{equation}
\pairing{G'(x)-G'(y)}{x-y} \le L_B\norm{y-x}{\LL}^2 \quad \forall \, x,y\in B.
	\label{def:L-smooth}
	\end{equation}
For brevity, we say that $G$ is \emph{$L_B$--smooth on $B$}. If the constant $L_B=L>0$ can be chosen to be independent of $B$, then we say that $G$ is \emph{globally Lipschitz smooth with a constant $L$}, or simply \emph{$L$--smooth}.
	\end{defn}

\begin{rmk}[terminology]
The above definition is a weaker notion than the local Lipschitz continuity of the Fr\'echet derivative of $G$, which is given by
\begin{equation}
  \norm{G'(x)-G'(y)}{\LLinv}\le L_B\norm{x-y}{\LL} \quad \forall x,y\in B,
\label{def:Lip-str}
\end{equation}
for some $L_B>0$. Of course, this implies the local Lipschitz smoothness of $G$ \eqref{def:L-smooth}. In this paper, to avoid confusion, whenever \eqref{def:Lip-str} holds, we will say that $G$ is \emph{locally Lipschitz smooth in the strong sense} or that \emph{$G'$ is locally Lipschitz in the strong sense}. We need this stronger condition when we conduct the continuous level analysis (Section \ref{sec:ODE-model}). Note, however, for convex functions, the global versions of the two definitions are equivalent. That is, if $B=\HH$, \eqref{def:L-smooth} implies \eqref{def:Lip-str} (see \citep[Theorem 2.1.5 (2.1.8)]{nesterov_2014}). 
\ermk
\end{rmk}

	\begin{defn}[strong convexity]
Let $G:\HH\goesto\RR$ be Fr\'echet differentiable. We say that $G$ is \emph{$\mu$--strongly convex} (with respect to $\LL$--norm) iff there exists a constant $\mu>0$ such that
	\begin{equation}
\pairing{G'(x)-G'(y)}{x-y}\ge \mu	 \norm{y-x}{\LL}^2 \quad \forall \, x,y\in\HH .
	\label{strconv-coerc-prec}
	\end{equation}
	\end{defn}

We now state an equivalent characterization of these notions.

	\begin{thm}[equivalence]
	\label{equi-precon}
Let $\HH$ be a real and separable Hilbert space, and $G: \HH \goesto \mathbb{R}$ be Fr\'echet differentiable. $G$ is $L_B$--smooth on the bounded convex set $B \subset \HH$ if and only if
	\begin{equation}
  G(y)-G(x)-\pairing{G'(x)}{y-x} \le \frac{L_B}{2} \norm{y-x}{\LL}^2 \quad \forall \, x,y\in B.
	\label{LipDer-quad-pre}
	\end{equation}
Similarly, $G$ is $\mu$--strongly convex if and only if
	\begin{equation}
  G(y)-G(x)-\pairing{G'(x)}{y-x} \ge \frac{\mu}{2} \norm{y-x}{\LL}^2 \quad \forall \, x,y\in\HH .	
	\label{def:str-convex}
	\end{equation}
	\end{thm}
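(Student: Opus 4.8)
# Proof Proposal for Theorem 2.12 (equivalence)

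The plan is to prove the standard integral characterization linking the "gradient-difference" form of smoothness/convexity \eqref{def:L-smooth}, \eqref{strconv-coerc-prec} to the "quadratic-bound" form \eqref{LipDer-quad-pre}, \eqref{def:str-convex}. I will treat the $L_B$--smoothness case in detail, since the $\mu$--strong convexity case is entirely parallel with the inequality reversed, $L_B$ replaced by $\mu$, and the bounded convex set $B$ replaced by all of $\HH$ (noting that $\HH$ itself is convex, so no extra argument is needed there). The main tool is the fundamental theorem of calculus applied along the segment $[x,y]$, which lies in $B$ precisely because $B$ is convex; this is where convexity of $B$ enters.

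First I would establish the forward implication. Fix $x,y \in B$ and define $\varphi:[0,1]\to\RR$ by $\varphi(t) = G(x + t(y-x))$. Since $G$ is Fréchet differentiable and $B$ is convex, $\varphi$ is differentiable with $\varphi'(t) = \pairing{G'(x+t(y-x))}{y-x}$, and by Proposition~2.9 (continuity) the map $t \mapsto G'(x+t(y-x))$ is norm-continuous, so $\varphi'$ is continuous and $\varphi(1) - \varphi(0) = \int_0^1 \varphi'(t)\,\diff t$. Therefore
\[
G(y) - G(x) - \pairing{G'(x)}{y-x} = \int_0^1 \pairing{G'(x+t(y-x)) - G'(x)}{y-x}\,\diff t.
\]
Now apply \eqref{def:L-smooth} with the pair of points $x + t(y-x)$ and $x$, both of which lie in $B$: the integrand is bounded by $L_B \norm{t(y-x)}{\LL}^2 / t = t L_B \norm{y-x}{\LL}^2$ for $t \in (0,1]$ (dividing the inequality $\pairing{G'(x+t(y-x))-G'(x)}{t(y-x)} \le L_B \norm{t(y-x)}{\LL}^2$ by $t>0$). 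Integrating $t L_B \norm{y-x}{\LL}^2$ over $[0,1]$ yields exactly $\frac{L_B}{2}\norm{y-x}{\LL}^2$, which is \eqref{LipDer-quad-pre}.

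For the converse, assume \eqref{LipDer-quad-pre} holds on $B$. Given $x,y \in B$, write the quadratic bound once for the pair $(x,y)$ and once for the pair $(y,x)$:
\[
G(y)-G(x)-\pairing{G'(x)}{y-x} \le \tfrac{L_B}{2}\norm{y-x}{\LL}^2,
\qquad
G(x)-G(y)-\pairing{G'(y)}{x-y} \le \tfrac{L_B}{2}\norm{x-y}{\LL}^2.
\]
Adding these two inequalities, the $G(x)$ and $G(y)$ terms cancel, leaving $-\pairing{G'(x)}{y-x} - \pairing{G'(y)}{x-y} \le L_B \norm{y-x}{\LL}^2$, i.e. $\pairing{G'(x) - G'(y)}{x-y} \le L_B \norm{y-x}{\LL}^2$, which is \eqref{def:L-smooth}. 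The strong convexity equivalence is obtained the same way with reversed inequalities over $B = \HH$.

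The one point requiring a little care — the "main obstacle," though it is more a matter of rigor than of difficulty — is justifying the fundamental theorem of calculus for the scalar function $\varphi$ and, in particular, the validity of differentiating under the Fréchet derivative along the segment and the integrability of $\varphi'$. This is handled cleanly by Proposition~2.9, which guarantees $G'$ is norm-continuous on the open convex set containing $B$ (or one may first establish the claim on the interior and pass to the closure, but since $B$ is only assumed bounded and convex, it is simplest to invoke continuity of $G'$ directly from convexity of $G$); continuity of $\varphi'$ then makes the integral a genuine Riemann integral and the manipulation above rigorous. Everything else is bookkeeping.
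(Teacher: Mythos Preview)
Your proposal is correct and matches the paper's approach exactly: the paper's proof is a one-line reference to ``Taylor's Theorem with integral remainder'' together with a citation to \cite{nesterov_2014}, and your argument is precisely a spelled-out version of that. One minor caveat: your appeal to Proposition~2.9 for continuity of $G'$ presupposes that $G$ is convex, which the smoothness part of the theorem does not assume; you can sidestep this by applying the mean value theorem to $\psi(t)=\varphi(t)-\varphi(0)-t\varphi'(0)-\tfrac{L_B}{2}t^2\norm{y-x}{\LL}^2$, whose derivative you have already shown is nonpositive, so no integrability of $\varphi'$ is needed.
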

	\begin{proof}
These results follow from Taylor's Theorem with integral remainder. See also \citep[Theorem 2.1.5, Theorem 2.1.9]{nesterov_2014}.
	\end{proof}

Among the two characterizations of Lipschitz smoothness and strong convexity stated above, we will call \eqref{LipDer-quad-pre} and \eqref{def:str-convex} the upper and the lower \emph{quadratic trap} of $G$, respectively. 
The constant $\tfrac{L_B}{\mu}$ is called the \emph{(local) condition number} of the objective functional $G$ with respect to the $\LL$--norm. In what follows, we will use its reciprocal, denoted by $\rho=\tfrac{\mu}{L_B} \in (0,1]$, to quantify rates of convergence. Note that the condition number crucially depends on the norm that is used to describe the geometry of the graph of $G$. Choosing a good preconditioner, $\LL$, is at the heart of much of scientific computing.

We conclude this section by stating a pair of well-known identities which we will use frequently. For any $A,B\in\HH$, 
	\begin{align}
	(A,B)_\LL&=\half \norm{A}{\LL}^2 +\half \norm{B}{\LL}^2-\half \norm{A-B}{\LL}^2
	\label{3pt-id-}
	\\
	&=\half \norm{A+B}{\LL}^2 -\half \norm{A}{\LL}^2-\half \norm{B}{\LL}^2
	\label{3pt-id+}.
	\end{align}

\section{Optimization schemes}
\label{sec:algorithms}

Here we briefly review several algorithms that are closely related to our main algorithm of interest. To focus on the main differences between the schemes of interest, we will not pay attention to choices of step size and stopping criteria of the algorithms. For those readers who are interested in these details, we refer, for instance, to \citep{nesterov_2014, boyd2004convex, beck2017ch8, Chen2018Convergence}.

\begin{algorithm}
\caption{Preconditioned gradient descent method (PGD)}

  \KwData{$G$: The objective}
  \KwData{$s>0$: The step size}
  \KwData{$x_0 \in \HH$: The initial guess}
  \KwResult{The sequence $\{x_k\}_{k\geq1}$ that approximates $x^*$, the minimizer of $G$}
  \For{$k\geq 0$}{
    $ x_{k+1} = x_k - s\LLinv G'(x_k)  $\;
  }
\label{alg:PGD}
\end{algorithm}

We begin by presenting the PGD scheme in Algorithm~\ref{alg:PGD} and describing its convergence properties. To do so, we introduce
	\[
B = \left\{ x\in\HH  \ \middle| G(x) \le G(x_0) \right\},
	\]
which is a bounded, convex set containing the minimizer. Then, assuming that $G$ is $L_B$--smooth on $B$ and $\mu$--strongly convex, and that the step size satisfies $s \in (0,2/(L_B+\mu)]$, it is possible to show that $x_k \in B$ for all $k \geq 0$. Moreover, in this setting, the scheme converges exponentially fast to the minimizer (see \citep{feng2017preconditioned, Chen2018Convergence, nesterov_2014}). In particular, if $s= 2/(L_B+\mu)$, then
	\begin{equation}
	\label{eq:convGD}
\norm{x_k - x^*}{\LL} \leq \left( \frac{1-\rho}{1+\rho} \right)^k \norm{x_0-x^*}{\LL}.
	\end{equation}

	\begin{algorithm}
 
  \caption{Preconditioned accelerated gradient descent method (PAGD)}
 \label{alg:PAGD}
  \KwData{$G$: The objective}
  \KwData{$\eta>0$: The friction coefficient}
  \KwData{$s>0$: The step size}
  \KwData{$x_0 \in \HH$: The initial guess}
  
  \KwResult{The sequence $\{x_k\}_{k\geq1}$ that approximates $x^*$, the minimizer of $G$}

  \KwSty{Define: $\theta=\eta\sqrt{s}$ and  $\lambda=\frac{1-\theta}{1+\theta}$}\;
  \KwSty{Set:} $x_{-1}=v_0 = x_0 \in\HH$\;
  \For{$k\geq0$}{
    \begin{align}
    	y_{k} &=x_{k}+\lambda(x_{k}-x_{k-1}),
    	\label{PAGD-y-upd}
    	\\
      x_{k+1} &=y_k- s \LLinv G'(y_k) ,
    \label{PAGD-x-upd}
    \\ 
      v_{k+1}&=x_k+\oneover{\theta}(x_{k+1}-x_k). 
    \label{PAGD-v-upd}
    \end{align}
  }
	\end{algorithm}

To improve on the convergence of GD (Algorithm~\ref{alg:PGD} with  $\LL=\mathfrak{R}_{\HH}^{-1}$), \citet{nesterov1983} devised an algorithm, which ``accelerates'' the rate of convergence of GD. The improved algorithm is commonly known as \emph{Nesterov's accelerated gradient descent method} (AGD). The preconditioned version of this scheme, PAGD, is presented in Algorithm \ref{alg:PAGD}. Roughly speaking, it computes an extrapolation, \eqref{PAGD-y-upd}, takes a gradient step there, \eqref{PAGD-x-upd}, and repeats the same process. Notice that an actual implementation does not need to compute the sequence $\{v_k\}_{k\geq0}$. We need it for the theoretical analysis.  As we will see in Section~\ref{sec:PAGD-conv}, for convergence, the algorithm must satisfy the condition $s\le 1/L_B$ and $\eta\le \sqrt \mu$ where $L_B>0$ is the (local) Lipschitz smoothness constant of $G$ with respect to a bounded convex neighborhood of the minimizer, $B$, and $\mu$ is the strong convexity constant.

It must be noted that PAGD, as presented in Algorithm~\ref{alg:PAGD}, is practical only if the objective functional is $\mu$--strongly convex ($\mu>0$). Otherwise, a convergence result may not be available. There exists a more general scheme, which one may call \emph{accelerated gradient descent method with variable weights} (see \citep[p. 78]{nesterov_2014}), that is applicable to merely convex objectives. We do not discuss this case here.

Let us now compare the performances of GD and AGD (Algorithm \ref{alg:PGD} and Algorithm \ref{alg:PAGD} with $\LL=\mathfrak{R}^{-1}_{\HH}$ respectively) by comparing $G(x_k)-G^*$, where $k$ is the number of iterations and $G^*=G(x^*)$ is the minimum of $G$.
To the best of our knowledge, the existing results on AGD are established under the assumption that the objective is \emph{globally} Lipschitz smooth. Thus, for the rest of the summary of this section, the objective $G$ is assumed to be (globally) $L$--smooth.
If GD is applied to a (merely) convex, $L$--smooth objective functional with a step size condition $0<s\leq 1/L$, then we have a first order convergence in the objective functional, i.e., $G(x_k)-G^*\le\bigo{1/k}$ as $k\goesto\infty$ (see \citep[Corollary 2.1.2]{nesterov_2014}). On the other hand, AGD with variable weights (the more general version mentioned above) provides a second order convergence, that is, $G(x_k)-G^*\le\bigo{1/k^2}$ as $k\goesto\infty$. If the objective is, in addition, $\mu$--strongly convex, the convergence rates of the two schemes become exponential. Specifically, estimate \eqref{eq:convGD} and the quadratic traps show that the convergence rate of GD is $G(x_k)-G^*\le\bigo{(\frac{1-\rho}{1+\rho})^{2k}}$ as $k\goesto\infty$, where we recall that $\rho=\mu/L$. This is in contrast to AGD, which converges with a rate of $G(x_k)-G^*\le\bigo{(1-\sqrt{\rho})^k}$ as $k\goesto\infty$; see \citep[Theorem 2.2.3]{nesterov_2014}. If $\rho\ll1$, this acceleration can be significant. 
As we will see later, PAGD achieves the same rate of exponential convergence even if the objective is locally Lipschitz smooth instead of the Lipschitz smoothness being imposed globally.

\section{An ODE model for PAGD}
\label{sec:ODE-model}

	We study a continuous time analogue of PAGD, a second order ODE, inspired by \citep{su}. As we will see, the discussion in this section turns out to be informative. It not only provides an intuitive understanding of Nesterov's acceleration, but also guides us to important results at the discrete level.  As mentioned in the introduction, there are recent works that arrive at the same conclusion for some parts of our results using similar ideas.  However, our unique contributions rely on our specific layout of various quantities and calculations. Thus, we include such details in a condensed manner while referring to existing work otherwise.

To streamline the discussion, we start by directly introducing the initial value problem (IVP) whose certain discretization leads to PAGD:
\begin{equation}
  \ddot{X}(t)+2\eta\dot{X}(t)+\LLinv G'(X(t))=0, \ t>0, \qquad 
  X(0)=x_0, \qquad \dot{X}(0)=0. 	
\label{the-IVP}
\end{equation}

Interestingly, this is the same system as what  \citet{polyak1964} had in mind when he proposed the \emph{heavy ball} method.
See Appendix \ref{app-PAGD-to-IVP} for its derivation. Conversely, PAGD can be viewed as a discretization of this IVP although not every choice in the process can be seen as natural or intuitive. It is
given in Appendix \ref{subsec:discretization}. Note that it involves some ingredients appearing in Section \ref{sub:RateOfConvergence}.

	\begin{rmk}[physical interpretation]
	\label{rmk:ODE-phys-interp}

	\begin{figure}
		{\includegraphics[height=0.4\textwidth]{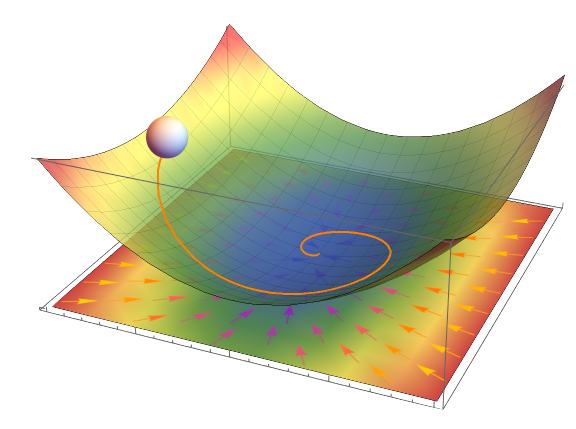}}
		\caption{A rolling ball system. The IVP \eqref{the-IVP} describes a ball of unit mass rolling down a bowl-shaped potential landscape with a constant friction coefficient.}
				\label{fig:rollingball}
	\end{figure}
	
This IVP \eqref{the-IVP} describes the motion of a ball of unit mass in the potential $G$ with friction coefficient $2\eta$ which starts from the initial position $x_0$ at rest; see Figure \ref{fig:rollingball}.
Our physical intuition suggests that the ball will converge to its minimal point as it exhausts the initial energy under the action of friction. If the friction, quantified by $\eta$, is too small it will oscillate much as it reaches the minimal point and will converge only after a long travel. On the other hand, if the friction is too large, it will not move sufficiently rapidly, and this, in turn, will also lead to a slow convergence. 

Let us compare this with another physical system. We can interpret the physics of the gradient flow as a limiting case of the same dynamics. The gradient flow 
$ 
\dot X(t)=-\LLinv G'(X(t))
$ 
can be viewed, up to a constant factor $2\eta$, as a massless limit of the IVP \eqref{the-IVP}. That is, a physical thought experiment suggests that the surroundings hold the particle back as soon as it gets accelerated since it is so light. A real life example of this kind is a very viscous fluid, such as honey, flowing down a bowl. Our physical experience suggests that it will not oscillate and will flow along the steepest descent direction every moment. However, it will reach the bottom slower than the rolling ball will if the friction is appropriately strong.
	\ermk\end{rmk}

\subsection{Analysis of the IVP}
\label{sub:ExUniqueSmooth}

As one can expect from the fact that the IVP \eqref{the-IVP} describes a concrete physical situation, its solution possesses good properties. In this and the following section, however, we need a slightly stronger Lipschitz condition on $G'$ than in the discrete level discussion.

	\begin{lem}[existence and uniqueness]
	\label{lem:ExistUniqueTheIVP}
Suppose that $G:\HH\goesto\RR$ is $\mu$--strongly convex and locally Lipschitz smooth in the strong sense, i.e., \eqref{def:Lip-str} holds. Then, for any $T>0$, there exists a unique solution $X\in C^2(0,T;\HH)$ to the initial value problem \eqref{the-IVP} and the solution obeys the following energy identity
	\begin{equation}
\half\norm{\dot X(t)}{\LL}^2 +G(X(t))-G^*=G(x_0)-G^*-2\eta \int_0^t\norm{\dot{X}(\tau)}{\LL}^2 \diff \tau\quad \forall t\geq 0.
 \label{eql:naive-egy}
 \end{equation}
 Consequently, the solution exists for all $t\in [0,\infty)$ and it is twice continuously differentiable.
 	\end{lem}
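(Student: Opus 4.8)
The plan is to establish local existence and uniqueness first via a fixed-point/Picard argument after rewriting \eqref{the-IVP} as a first-order system, then promote this to global existence by deriving the energy identity \eqref{eql:naive-egy} and using it to preclude finite-time blow-up. Concretely, I would set $Y=(X,\dot X)$ and write \eqref{the-IVP} as $\dot Y = F(Y)$ with $F(X,V) = (V,\, -2\eta V - \LLinv G'(X))$, initial data $Y(0)=(x_0,0)$. The right-hand side $F$ maps $\HH\times\HH$ into itself; the only nontrivial term is $X\mapsto \LLinv G'(X)$. The strong local Lipschitz hypothesis \eqref{def:Lip-str} gives $\norm{G'(x)-G'(y)}{\LLinv}\le L_B\norm{x-y}{\LL}$ for $x,y$ in any bounded convex $B$, and by \eqref{L-Linv-rel} this is exactly $\norm{\LLinv G'(x)-\LLinv G'(y)}{\LL}\le L_B\norm{x-y}{\LL}$, so $F$ is Lipschitz on bounded sets. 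The Picard--Lindel\"of theorem (in Banach spaces) then yields a unique maximal solution $X\in C^2(0,T_{\max};\HH)$; twice-differentiability follows by bootstrapping since $\dot X$ is $C^1$ once $X$ is continuous and $\ddot X = -2\eta\dot X - \LLinv G'(X)$.

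Next I would derive the energy identity. Since $X\in C^2$ on any compact subinterval of $[0,T_{\max})$, I may take the $\LL$-inner product of the ODE \eqref{the-IVP} with $\dot X(t)$:
\begin{equation*}
(\ddot X,\dot X)_\LL + 2\eta\norm{\dot X}{\LL}^2 + (\LLinv G'(X),\dot X)_\LL = 0.
\end{equation*}
The first term is $\tfrac{1}{2}\tfrac{\diff}{\diff t}\norm{\dot X}{\LL}^2$; for the third term, \eqref{def-L-iprd} and \eqref{def-Linv-inner-prod} give $(\LLinv G'(X),\dot X)_\LL = \pairing{G'(X)}{\dot X} = \tfrac{\diff}{\diff t}G(X(t))$ by the chain rule for Fr\'echet derivatives. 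Integrating from $0$ to $t$ and using $\dot X(0)=0$ yields \eqref{eql:naive-egy}.

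Finally, global existence: strong convexity \eqref{def:str-convex} gives $G(X(t))-G^*\ge \tfrac{\mu}{2}\norm{X(t)-x^*}{\LL}^2 \ge 0$, and the integral term in \eqref{eql:naive-egy} is nonnegative, so for all $t\in[0,T_{\max})$ we get $\tfrac12\norm{\dot X(t)}{\LL}^2 + \tfrac{\mu}{2}\norm{X(t)-x^*}{\LL}^2 \le G(x_0)-G^*$. Hence $X(t)$ and $\dot X(t)$ remain in a fixed bounded set, so $Y(t)$ cannot leave every bounded set as $t\uparrow T_{\max}$; the standard blow-up criterion forces $T_{\max}=\infty$. (Note the energy identity already uses that the orbit stays in a bounded convex $B$ to invoke \eqref{def:Lip-str}, so one should run the a priori bound and the continuation argument together, e.g. first on the maximal interval where the solution stays in a chosen $B$, then observe the bound keeps it strictly inside $B$.) The main obstacle I anticipate is exactly this coupling: the Lipschitz constant $L_B$ depends on the set $B$, which one only knows a posteriori contains the orbit, so the existence argument and the energy estimate must be interleaved rather than applied in sequence — choosing $B$ to be, say, a large ball around $x^*$ whose radius is dictated by $G(x_0)-G^*$, and checking the solution never reaches its boundary.
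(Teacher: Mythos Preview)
Your proposal is correct and, for the energy identity, follows exactly the paper's argument: take the $\LL$--inner product of \eqref{the-IVP} with $\dot X$ and integrate in time. For existence, uniqueness, and global extension, the paper simply cites \citep[Theorem~3.1 and Proposition~4.2]{Attouch2000} rather than spelling out the Picard--Lindel\"of/blow-up argument you give; your version is a self-contained unpacking of what that reference does, and your observation about interleaving the a~priori bound with the continuation (choosing $B$ large enough from the outset via $G(x_0)-G^*$ and strong convexity, then showing the orbit never reaches $\partial B$) is exactly the right way to handle the local Lipschitz constant.
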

	\begin{proof}
See \citep[Theorem 3.1 and Proposition 4.2]{Attouch2000} for the existence, uniqueness, and smoothness. For the energy law, take the $\LL$--inner product of the first equation of \eqref{the-IVP}  with $\dot X$ and integrate over time $\tau \in[0,t]$. 
\end{proof}

	\begin{rmk}[smoothness of the solution]
The fact that
$
  X \in C^2((0,\infty); \HH) \cap C([0,\infty); \HH)
$ 
justifies the manipulations we will carry out when we derive the IVP \eqref{the-IVP} in Appendix \ref{app-PAGD-to-IVP}.
	\ermk\end{rmk}

	\subsection{Convergence to equilibrium}
	\label{sub:RateOfConvergence}
We now wish to prove that the solution to the IVP \eqref{the-IVP} with $G$ being locally Lipschitz smooth and $\mu$--strongly convex converges to its attractive steady state solution as $t\goesto\infty$, which is the minimal point in this case, at a matching rate with that of PAGD. This is one of the highlights of this work.
To this end, we introduce an auxiliary variable
\[
V(t)=X(t)-x^*+\frac{1}{\eta}\dot{X}(t)
\]
so that the first equation of the IVP \eqref{the-IVP} can be rewritten
\begin{equation}
\eta \dot{V}(t) +\eta\dot{X}(t)+\LLinv G'(X(t))=0.
\label{theV-ODE}
\end{equation}
We also introduce an energy
	\begin{equation}
E(X,V)=\frac{\eta}{2} \norm{V}{\LL}^2+\frac{1}{\eta}(G(X)-G^*),
	\label{lyapunov}	
	\end{equation}
where we recall $G^*=G(x^*)=\min_{x\in\HH} G(x)$. We will show that $E$ is a \emph{Lyapunov} function for the IVP \eqref{the-IVP}. For notational convenience, set
$ 
E_0=E(x_0,x_0)=\oneover{\eta} (G(x_0)-G^*)+\frac{\eta}{2} \norm{x_{0}-x^*}{\LL}^2.
$ 

\begin{thm}[exponential decay]
\label{thm:Lyap-decay}
Let $G:\HH\goesto\RR$ be locally Lipschitz smooth in the strong sense and $\mu$--strongly convex. Denote by $X$ the unique solution to the IVP \eqref{the-IVP}.
If $\eta^2\le\mu$, the exponentially inflated energy $\mathcal{E}(t)=e^{\eta t} E(X(t),V(t))$ is nonincreasing. Consequently, the Lyapunov function \eqref{lyapunov} decays to zero at an exponential rate:
\begin{align}
  E(X(t),V(t))=\frac{\eta}{2} \norm{V(t)}{\LL}^2+\frac{1}{\eta}\left(G(X(t))-G^* \right)\le {e^{-\eta t}}E_0.
	\label{est:Lyap-decay}
\end{align}
\end{thm}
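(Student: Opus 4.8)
The plan is to show that the exponentially inflated energy $\mathcal{E}(t) = e^{\eta t} E(X(t), V(t))$ is nonincreasing by differentiating it in time and showing $\dot{\mathcal{E}}(t) \le 0$; the decay estimate \eqref{est:Lyap-decay} then follows immediately from $\mathcal{E}(t) \le \mathcal{E}(0) = E_0$, since $E$ is manifestly nonnegative. The smoothness furnished by Lemma~\ref{lem:ExistUniqueTheIVP} (namely $X \in C^2$, so $V \in C^1$) justifies all the differentiation.

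First I would compute $\ddt E(X(t), V(t))$. Differentiating the definition \eqref{lyapunov} and using the chain rule for $G$ (here the strong-sense local Lipschitz smoothness is what guarantees $G'$ is continuous so that $\ddt G(X(t)) = \pairing{G'(X(t))}{\dot X(t)}$ holds along the trajectory, which lives in a bounded set by the energy identity \eqref{eql:naive-egy}), I get
\begin{equation*}
\ddt E = \eta (V, \dot V)_\LL + \frac{1}{\eta}\pairing{G'(X)}{\dot X}.
\end{equation*}
Now substitute $\eta \dot V = -\eta \dot X - \LLinv G'(X)$ from \eqref{theV-ODE}, and use that $(W, \LLinv G'(X))_\LL = \pairing{G'(X)}{W}$ for any $W \in \HH$ (a consequence of \eqref{def-L-iprd} and the definition of $\LLinv$ as the Riesz map). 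With $V = X - x^* + \frac{1}{\eta}\dot X$ this yields, after collecting terms, something of the form
\begin{equation*}
\ddt E = -\eta^2 (V, \dot X)_\LL - \pairing{G'(X)}{X - x^*} - \frac{1}{\eta}\norm{\dot X}{\LL}^2 + \text{(cross terms that cancel)}.
\end{equation*}
The goal is to massage this into $\ddt E \le -\eta E + (\text{nonpositive remainder})$. For the $\LL$-norm-squared terms in $E$ I would expand $\norm{V}{\LL}^2 = \norm{X - x^*}{\LL}^2 + \frac{2}{\eta}(X-x^*, \dot X)_\LL + \frac{1}{\eta^2}\norm{\dot X}{\LL}^2$, and for the potential term I would invoke the lower quadratic trap \eqref{def:str-convex}: $G(X) - G^* = G(X) - G(x^*) \le \pairing{G'(X)}{X - x^*} - \frac{\mu}{2}\norm{X - x^*}{\LL}^2$, equivalently $\pairing{G'(X)}{X-x^*} \ge (G(X)-G^*) + \frac{\mu}{2}\norm{X-x^*}{\LL}^2$. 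Combining, one expects the bound $\dot{\mathcal{E}} = e^{\eta t}(\eta E + \ddt E) \le e^{\eta t}\bigl[-(\text{coefficient})\norm{\dot X}{\LL}^2 - \tfrac{1}{2}(\mu - \eta^2)\norm{X - x^*}{\LL}^2\bigr] \le 0$, where the hypothesis $\eta^2 \le \mu$ is exactly what makes the $\norm{X - x^*}{\LL}^2$ coefficient nonnegative, and the friction term $-\eta\norm{\dot X}{\LL}^2$ from \eqref{eql:naive-egy}-type cancellations produces the needed $\norm{\dot X}{\LL}^2$ control.

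The main obstacle I anticipate is the bookkeeping in the middle step: getting $\ddt E$ into a form where the indefinite cross term $(X - x^*, \dot X)_\LL$ (which has no sign) is fully absorbed. The trick —- and the reason the auxiliary variable $V$ was introduced with precisely the coefficient $\frac{1}{\eta}$ -— is that $\eta(V, \dot V)_\LL$ supplies a term $-\eta^2(X - x^* + \frac{1}{\eta}\dot X, \dot X)_\LL$ that, together with the $\eta$-inflation term $\eta \cdot \frac{\eta}{2}\norm{V}{\LL}^2$, is designed so the $(X-x^*, \dot X)_\LL$ contributions telescope against each other, leaving only definite-sign quantities. I would verify this cancellation carefully rather than trust it, since a wrong coefficient anywhere destroys the argument; once it checks out, the rest is the routine application of \eqref{def:str-convex} and the sign hypothesis $\eta^2 \le \mu$. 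Finally, since $E(X(t),V(t)) \ge 0$ always and $\mathcal{E}$ is nonincreasing, $E(X(t),V(t)) = e^{-\eta t}\mathcal{E}(t) \le e^{-\eta t}\mathcal{E}(0) = e^{-\eta t} E_0$, which is \eqref{est:Lyap-decay}.
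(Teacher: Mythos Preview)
Your proposal is correct and follows essentially the same route as the paper: differentiate $\mathcal{E}$, use the $V$--form \eqref{theV-ODE} of the ODE, and close with the lower quadratic trap \eqref{def:str-convex} so that the hypothesis $\eta^2\le\mu$ forces $\dot{\mathcal{E}}\le 0$. The only organizational difference is that the paper takes the $\LL$--inner product of \eqref{theV-ODE} with $V$ and applies the polarization identity \eqref{3pt-id-} to rewrite $\eta(\dot X, V)_\LL$ as $\tfrac12\|\dot X\|_\LL^2 + \tfrac{\eta^2}{2}\|V\|_\LL^2 - \tfrac{\eta^2}{2}\|X-x^*\|_\LL^2$, which automatically disposes of the indefinite cross term $(X-x^*,\dot X)_\LL$ you were worried about; your plan to expand $\|V\|_\LL^2$ by hand and watch the cross terms cancel against $\eta^2(V,\dot X)_\LL$ amounts to the same thing.
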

\begin{proof}
Existence and uniqueness of $X$ is guaranteed by Lemma~\ref{lem:ExistUniqueTheIVP}. Let us now prove the estimate \eqref{est:Lyap-decay}. Taking the inner product of \eqref{theV-ODE} with $V(t)$, and using the identity \eqref{3pt-id-}, we obtain, suppressing the time variable,  
\begin{equation}
  \begin{aligned}
    0&=\eta(V,\dot{V})_\LL +\eta (\dot X, X-x^*+\frac{1}{\eta}\dot X)_\LL +(\LLinv G'(X), X-x^*)_\LL \\ 
    &+\frac{1}{\eta}(\LLinv G'(X),\dot X)_\LL \\
    &=\eta (V,\dot{V})_\LL +\half \norm{\dot X}{\LL}^2 + \frac{\eta^2}{2}\norm{V}{\LL}^2 -\frac{\eta^2}{2}  \norm{X-x^*}{\LL}^2  +\pairing{G'(X)}{X-x^*} \\
    &+\frac{1}{\eta}\pairing{G'(X)}{\dot X}.
  \end{aligned}
\label{est1-ODE-P}
\end{equation}
The lower quadratic trap, \eqref{def:str-convex}, implies
\begin{equation}
  G(X)-G^* - \pairing{G'(X)}{X-x^*} \le -\frac{\mu}{2}\norm{X-x^*}{\LL}^2.
\label{est-str-con1-P}
\end{equation}
Substituting \eqref{est1-ODE-P} into the time derivative of the inflated energy and then using the above estimate \eqref{est-str-con1-P}, we have
\begin{equation}
  \begin{aligned}
  \dot{\mathcal{E}}(t)&= e^{\eta t}\left[ \frac{\eta^2}{2} \norm{V}{\LL}^2 +\eta(V,\dot V)_\LL + (G(X)-G^*)+\frac{1}{\eta}\pairing{G'(X)}{\dot X} \right] 	
  \\
  &=e^{\eta t}\left[ -\half \norm{\dot X}{\LL}^2  +\frac{\eta^2}{2}  \norm{X-x^*}{\LL}^2  -\pairing{G'(X)}{X-x^*}+ G(X)-G^* \right]	
  \\
  &\le -\half e^{\eta t} \left[ \norm{\dot X}{\LL}^2 +\frac{\mu-\eta^2}{2} \norm{X-x^*}{\LL}^2 \right].
  \end{aligned}
\label{der-engy-neg-P}	 
\end{equation}
The last term is always nonpositive provided $\eta^2\le\mu$, and this implies $\mathcal{E}(t)\le \mathcal{E}(0)=E_0$.  This completes the proof.	
\end{proof}

\begin{rmk}[physical interpretation]
We can rigorously explain the physical intuition given in Remark \ref{rmk:ODE-phys-interp} through Theorem \ref{thm:Lyap-decay} and its proof. If the friction, quantified by $\eta$, is too small the decay to the attraction point is slow as $\eta$ governs the decay rate $e^{-\eta t}$. On the other hand, If the friction is too large, say $\eta>\sqrt{\mu}$, then we cannot guarantee the boundedness of $\mathcal{E}(t)$.
\ermk\end{rmk}

\section{An energy approach to convergence of PAGD}
\label{sec:PAGD-conv}

In this section, we prove the existence of an invariant set of PAGD and its exponential convergence in the objective as well as in the residual when it is applied to a strongly convex, locally Lipschitz smooth objective.  We follow the ODE arguments developed in Section~\ref{sec:ODE-model}.  Throughout this section, we assume $\eta=\sqrt \mu$, the optimal choice for the friction coefficient in view of Theorem \ref{thm:Lyap-decay}. Note that this does not undermine generality. If $\tilde\mu$ is the largest strong convexity constant of $G$, that is, the supremum of $\mu$'s that satisfies the strong convexity, \eqref{strconv-coerc-prec}, then any $\mu\in (0,\tilde \mu]$ can be taken as a (non-optimal) strong convexity constant. Thus, the general case $\eta^2 \le \tilde \mu$ corresponds to $\eta^2=\mu \le \tilde \mu$, the optimal friction coefficient associated with a non-optimal strong convexity constant. 

As a first step, we show that the assumption of the local Lipschitz smoothness is sufficient for our analysis, as the iterates lie within a bounded set. We first show that, for every $k\geq 0$, the $y_k$ iterate of PAGD lies in the segment between $x_k$ and $v_k$. This is used frequently in the convergence proof.

\begin{lem}[convex hull]\label{lem:xyv-rel}
For every $k \geq 0$, the iterates constructed in PAGD, described in Algorithm~\ref{alg:PAGD}, satisfy $y_k\in\overline{x_{k} v_{k}}$. Specifically, they satisfy the following four equivalent equations:
\begin{equation}
  \begin{dcases}
    y_k=\frac{1}{1+\theta} x_k +\frac{\theta}{1+\theta}v_k, & \qquad x_k=(1+\theta)y_k -\theta v_k, \\
    v_k= \left(1+\oneover{\theta} \right)y_k-\oneover{\theta}x_k, & \qquad x_k-y_k=\theta(y_k-v_k).
  \end{dcases}
\label{xyv-rel}  
\end{equation}
\end{lem}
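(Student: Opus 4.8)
The claim is that the three iterates $x_k$, $y_k$, $v_k$ produced by PAGD satisfy four equivalent linear relations, one of which exhibits $y_k$ as a convex combination of $x_k$ and $v_k$ (recalling $\theta = \eta\sqrt{s}$ and, with the step-size restriction $s \le 1/L_B$ and $\eta \le \sqrt{\mu}$, one has $\theta \in (0,1)$, so the coefficients $\frac{1}{1+\theta}$ and $\frac{\theta}{1+\theta}$ are genuinely in $(0,1)$ and sum to $1$, whence $y_k \in \overline{x_k v_k}$). The plan is to prove this by induction on $k$, since $v_k$ is defined in \eqref{PAGD-v-upd} in terms of $x_k$ and $x_{k-1}$, while $y_k$ is defined in \eqref{PAGD-y-upd} in terms of $x_k$ and $x_{k-1}$ as well; the content of the lemma is precisely that these two backward-looking formulas are compatible with the asserted forward relation.

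First I would dispose of the base case $k=0$. By the initialization line of Algorithm~\ref{alg:PAGD}, $x_{-1} = v_0 = x_0$, so \eqref{PAGD-y-upd} gives $y_0 = x_0 + \lambda(x_0 - x_{-1}) = x_0$, and then all four equations in \eqref{xyv-rel} reduce to the trivial identity $x_0 = x_0$; each is satisfied. Second, I would check that the four displayed equations are algebraically equivalent to one another for a fixed $k$ (this is just linear algebra in the two-dimensional affine span of $\{x_k, v_k\}$): solving $y_k = \frac{1}{1+\theta}x_k + \frac{\theta}{1+\theta}v_k$ for $x_k$ gives the second equation, solving for $v_k$ gives the third, and subtracting $y_k$ from both sides of the first (after clearing denominators) gives $x_k - y_k = \theta(y_k - v_k)$, the fourth. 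So it suffices to establish any one of them, say the last, $x_k - y_k = \theta(y_k - v_k)$, for all $k$.

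For the inductive step, I would assume the relation holds at index $k$ and derive it at index $k+1$. Using \eqref{PAGD-v-upd}, $v_{k+1} = x_k + \frac{1}{\theta}(x_{k+1} - x_k)$, so $y_{k+1} - v_{k+1} = y_{k+1} - x_k - \frac{1}{\theta}(x_{k+1} - x_k)$; multiplying by $\theta$, I want to show $\theta(y_{k+1} - v_{k+1}) = x_{k+1} - y_{k+1}$, i.e. $\theta y_{k+1} - \theta x_k - (x_{k+1} - x_k) = x_{k+1} - y_{k+1}$, i.e. $(1+\theta)y_{k+1} = 2x_{k+1} - x_k + \theta x_k = 2x_{k+1} - (1-\theta)x_k$. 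On the other hand, \eqref{PAGD-y-upd} at index $k+1$ gives $y_{k+1} = x_{k+1} + \lambda(x_{k+1} - x_k)$ with $\lambda = \frac{1-\theta}{1+\theta}$, so $(1+\theta)y_{k+1} = (1+\theta)x_{k+1} + (1-\theta)(x_{k+1} - x_k) = 2x_{k+1} - (1-\theta)x_k$, which is exactly what was needed. I would remark that the induction hypothesis is not even logically required for this particular computation — the relation $x_{k+1} - y_{k+1} = \theta(y_{k+1} - v_{k+1})$ follows directly from the defining formulas \eqref{PAGD-y-upd} and \eqref{PAGD-v-upd} once $k+1 \ge 1$ — but the base case $k=0$ still needs the separate initialization check because $v_0$ is set by hand rather than by \eqref{PAGD-v-upd}. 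The only mild subtlety, and the closest thing to an obstacle, is bookkeeping the index shifts correctly (the $v$-update at step $k+1$ uses $x_{k+1}$ and $x_k$, the $y$-update at step $k+1$ uses $x_{k+1}$ and $x_k$) and confirming $\theta \in (0,1)$ so that the convex-combination interpretation $y_k \in \overline{x_k v_k}$ is legitimate; neither is deep. Finally I would record that since $\frac{1}{1+\theta} + \frac{\theta}{1+\theta} = 1$ with both coefficients positive, $y_k$ lies on the closed segment $\overline{x_k v_k}$, completing the proof.
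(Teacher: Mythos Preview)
Your proof is correct and follows essentially the same route as the paper: handle $k=0$ via the initialization $x_0=y_0=v_0$, and for $k\ge 1$ combine the defining relations \eqref{PAGD-y-upd} and \eqref{PAGD-v-upd} (the latter at the previous index) together with $\lambda=\tfrac{1-\theta}{1+\theta}$ to obtain one of the four equivalent identities directly. As you yourself observe, the induction scaffolding is superfluous---the paper likewise proceeds by a direct elimination of $x_{k-1}$ rather than an inductive step.
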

\begin{proof}
If $k=0$ this is trivial since $x_0=y_0=v_0$. For $k\ge1$, we eliminate $x_{k-1}$ from \eqref{PAGD-y-upd} and \eqref{PAGD-v-upd} with the index being $k-1$ to get
\[
  \left(1-\oneover{\theta}\right)y_k+\lambda v_k=\left( (1+\lambda)(1-\oneover{\theta})+\frac{\lambda}{\theta}\right)x_k=-\frac{\lambda}{\theta}x_k.
\]
Rearranging terms and using $\lambda=\frac{1-\theta}{1+\theta}$, we obtain the equalities that are listed above.	
\end{proof}

We now show that there is an invariant set for the iterates of PAGD.

\begin{lem}[invariant set]
\label{lem:local-Lip}
Assume that the objective $G:\HH \goesto \RR$ is $\mu$--strongly convex and locally Lipschitz smooth. Define
	\begin{align}
\mathfrak{B}= \left\{x\in\HH \ \middle| \ \norm{x-x^*}{\LL}\le R \right\},
\label{def:invariant-set}
\end{align}
where $R=R_1+\frac{1}{\eta}R_2$, $R_1=\sqrt{\frac{2}{\mu}(G(x_0)-G^*)}$, $R_2=\sqrt{2r(G(x_0)-G^*)}$, and $r>1$.
Let PAGD, as described in Algorithm~\ref{alg:PAGD}, be implemented with a step size rule
\begin{equation}
\label{eq:sRule}
  s \in \left( 0, \min \left\{L_B^{-1}, \left(\frac{r-1}{r+1} \right)^2 \mu^{-1}\right\} \right],
\end{equation}
where $L_B$ the local Lipschitz smoothness constant of $G$ associated to the set $\mathfrak{B}$. Then, for all $k \geq 0$, we have that $\norm{x_{k}-x^*}{\LL}\le R_1$, hence $x_k\in \mathfrak{B}$, and $y_k, v_k\in \mathfrak{B}$.
\end{lem}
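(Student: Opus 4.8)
The plan is to run an induction on $k$ that simultaneously controls $\norm{x_k - x^*}{\LL}$ and a suitable auxiliary quantity (a discrete analogue of the Lyapunov energy $E$ from Section~\ref{sec:ODE-model}). The obstacle flagged in the introduction — that one needs the local Lipschitz constant $L_B$ to control the extrapolated iterate $y_k$, but $L_B$ is only available once the iterates are known to lie in $\mathfrak B$ — is circumvented by making the inductive hypothesis strong enough that the Lipschitz bound is only ever invoked at points already certified to be in $\mathfrak B$. Concretely, I would carry as the inductive hypothesis at step $k$ the three statements $\norm{x_j - x^*}{\LL} \le R_1$, $\norm{v_j - x^*}{\LL}\le \tfrac1\eta R_2 + R_1$ (say), and $y_j \in \mathfrak B$ for all $j \le k$; by Lemma~\ref{lem:xyv-rel}, $y_k$ is a convex combination of $x_k$ and $v_k$, so a bound on $x_k$ and $v_k$ automatically places $y_k$ in $\mathfrak B$, and only then do I use $L_B$-smoothness at $y_k$ in the update \eqref{PAGD-x-upd}.

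The analytic heart is the one-step energy estimate. I would mimic the proof of Theorem~\ref{thm:Lyap-decay}: define a discrete energy $\mathcal E_k$ built from $\norm{v_k - x^*}{\LL}^2$ and $G(x_k) - G^*$ (matching \eqref{lyapunov} with $V \leftrightarrow v_k - x^*$, $X \leftrightarrow x_k$), and show $\mathcal E_{k+1} \le \lambda^{-1}\mathcal E_k$ — i.e. a geometric decay with ratio $\lambda = \frac{1-\theta}{1+\theta}$, the discrete counterpart of the factor $e^{-\eta t}$. To get this I would expand $\norm{v_{k+1} - x^*}{\LL}^2$ using \eqref{PAGD-v-upd} and the relation $x_k - y_k = \theta(y_k - v_k)$ from \eqref{xyv-rel}, substitute $x_{k+1} - y_k = -s\LLinv G'(y_k)$, use the three-point identities \eqref{3pt-id-}–\eqref{3pt-id+} to turn cross terms into squares, and then apply the upper quadratic trap \eqref{LipDer-quad-pre} on $B = \mathfrak B$ (this is the step that needs $s \le L_B^{-1}$, so the gradient step is genuinely a descent from $y_k$ to $x_{k+1}$) together with the lower quadratic trap \eqref{def:str-convex} (strong convexity, using $\eta^2 = \mu$) to absorb the remaining indefinite terms. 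The algebra should collapse to $\mathcal E_{k+1} \le \lambda^{-1}\mathcal E_k \le \lambda^{-k-1}\mathcal E_0$, with $\mathcal E_0 = \tfrac1\eta(G(x_0)-G^*)$ since $x_0 = v_0$.

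From the energy bound the radius bounds are immediate: strong convexity gives $\tfrac\mu2 \norm{x_{k+1}-x^*}{\LL}^2 \le G(x_{k+1})-G^* \le \eta\,\mathcal E_{k+1} \le \cdots$, and here I must check the bookkeeping so that the constant lands exactly at $R_1^2 = \tfrac2\mu(G(x_0)-G^*)$ — this is where the precise normalization of $\mathcal E_k$ and the choice $\eta = \sqrt\mu$ matter, and I expect the clean bound $G(x_k) - G^* \le G(x_0) - G^*$ (the discrete energy never exceeds its initial value along the certified portion) to do the job, rather than a decaying bound. Similarly $\tfrac\eta2\norm{v_{k+1}-x^*}{\LL}^2 \le \mathcal E_{k+1}$ yields $\norm{v_{k+1}-x^*}{\LL}\le \sqrt{\tfrac2{\eta^2}(G(x_0)-G^*)}$; matching this against the stated $R_2 = \sqrt{2r(G(x_0)-G^*)}$ is where the second branch of the step-size rule \eqref{eq:sRule}, $s \le \big(\tfrac{r-1}{r+1}\big)^2\mu^{-1}$, enters — it is exactly the condition that keeps $\theta = \eta\sqrt s$ small enough, equivalently $\lambda$ close enough to $1$, that the $v$-iterate stays inside the ball of radius $\tfrac1\eta R_2$ about $x^*$. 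Then $y_{k+1}\in\overline{x_{k+1}v_{k+1}}\subset\mathfrak B$ by Lemma~\ref{lem:xyv-rel} and convexity of $\mathfrak B$, closing the induction. The main obstacle, as anticipated, is organizing the induction so the $L_B$-dependent estimate is never used prematurely; the second subtlety is getting the constants in the $v_k$-bound to reconcile with the stated $R$, which forces the particular form of the step-size restriction.
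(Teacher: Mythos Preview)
Your inductive scaffolding—certify $x_k,v_k\in\mathfrak B$, use Lemma~\ref{lem:xyv-rel} to place $y_k\in\mathfrak B$, and only then invoke $L_B$-smoothness at $y_k$—is exactly right. But the analytic engine you propose is \emph{not} the one the paper uses, and it will not deliver the stated constants. The paper does not use the Lyapunov function \eqref{lyapunov} here; it mimics the simpler energy identity \eqref{eql:naive-egy} instead. One rewrites PAGD as the second-order recursion $x_{k+1}-2x_k+x_{k-1}+(1-\lambda)(x_k-x_{k-1})+s\LLinv G'(y_k)=0$, takes the $\LL$-inner product with both $x_k-x_{k-1}$ and $x_{k+1}-x_k$, sums over $k=0,\dots,N$, and adds the two telescoped results to obtain
\[
\tfrac{1+\lambda}{2}\norm{x_{N+1}-x_N}{\LL}^2+s\bigl(1+\tfrac1\lambda\bigr)\bigl(G(x_{N+1})-G^*\bigr)\le s\bigl(1+\tfrac1\lambda\bigr)\bigl(G(x_0)-G^*\bigr).
\]
This gives $G(x_{N+1})\le G(x_0)$ directly (hence the sharp $R_1$ bound via strong convexity) and a discrete speed bound $\norm{x_{N+1}-x_N}{\LL}/\sqrt s\le\sqrt{\tfrac2\lambda(G(x_0)-G^*)}$. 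The second branch of \eqref{eq:sRule} is used precisely to force $\lambda^{-1}\le r$, making the speed at most $R_2$; then the triangle inequality applied to $v_{N+1}=x_N+\tfrac1\theta(x_{N+1}-x_N)$ yields $\norm{v_{N+1}-x^*}{\LL}\le R_1+\tfrac1\eta R_2=R$.

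Your Lyapunov route has a concrete gap. You write $\mathcal E_0=\tfrac1\eta(G(x_0)-G^*)$ ``since $x_0=v_0$'', but $v_0=x_0\ne x^*$, so in fact $E_0=\tfrac1\eta(G(x_0)-G^*)+\tfrac\eta2\norm{x_0-x^*}{\LL}^2$. With the correct $E_0$, the one-step decay (which is $E_{k+1}\le(1-\theta)E_k$, not ratio $\lambda^{-1}$) gives only $G(x_k)-G^*\le\eta E_0\le 2(G(x_0)-G^*)$, hence $\norm{x_k-x^*}{\LL}\le\sqrt2\,R_1$, not $R_1$; the ``clean bound $G(x_k)\le G(x_0)$'' you hope for is exactly what the Lyapunov function does \emph{not} provide, because of that kinetic term in $E_0$. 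Your approach does close an induction on the ball of radius $\sqrt2\,R_1\subset\mathfrak B$ under the single condition $s\le L_B^{-1}$—the $r$-branch of \eqref{eq:sRule} is then superfluous, contrary to your guess about where it enters—so you would prove a close variant of the lemma, but not the lemma as written.
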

\begin{proof}
The outline of this proof is simple although it is long. We mimic the energy law developed in Section~\ref{sub:ExUniqueSmooth} to obtain a bound on the distance between the main iterates and the minimizer and that on the speed. Once we get the bounds, it is easy to prescribe an appropriate ball, which will be our invariant set.

We will prove the statement by induction.  For $k=0$, the statement is trivial since $x_0=y_0=v_0$ and the strong convexity implies $\norm{x_0-x^*}{\LL}\le R_1$. Suppose that $\norm{x_k-x^*}{\LL}\le R_1$ (hence $x_k\in \mathfrak{B}$) and $y_k, v_k\in \mathfrak{B}$ are true for $k=0,1,2,\cdots,N$. We need to show that $\norm{x_{N+1}-x^*}{\LL}\le R_1$ (hence $x_{N+1}\in \mathfrak{B})$ and $v_{N+1}\in \mathfrak{B}$, then Lemma \ref{lem:xyv-rel} implies $y_{N+1}\in\overline{x_{N+1}v_{N+1}}\subset \mathfrak{B}$ since $\mathfrak{B}$ is a convex set as a sublevel set of a convex function.

Note that the condition $s\le (\frac{r-1}{r+1})^2 \mu^{-1}$, which is implied by \eqref{eq:sRule}, ensures $\lambda^{-1}$ to be bounded above since 
	\begin{equation}
\frac{1}{\lambda}=\frac{1+\sqrt{s\mu}}{1-\sqrt{s\mu}}\le r .
	\label{est10}
	\end{equation}

First, a similar argument to \citep[Proposition 4.6]{Chen2018Convergence} shows that the $x_{N+1}$ update from $y_N$ is a descent step in terms of $G$. 
That is, the section of $G$ across the line $\overleftrightarrow{y_N x_{N+1}}$ also inherits the strong convexity and the local Lipschitz smoothness with the same constants on the one-dimensional affine subset
\[
  B_{N+1} = \left\{x=y_N-\tau \LLinv G'(y_N)\in \HH \middle| \tau\in\RR \right\}.
\]
Let $S(\tau)=G(y_N-\tau \LLinv G'(y_N))$ denote the section.

Since we know that $y_N\in \mathfrak{B}$, we can bound $S$ in a neighborhood of $\tau = 0$ using the upper quadratic trap 	
	\begin{align*} U(\tau)&:=G(y_N)+\pairing{G'(y_N)}{-\tau\LLinv G'(y_N)}+\frac{L_B}{2}\norm{-\tau\LLinv G'(y_N)}{\LL}^2
	\\
	&=G(y_N)-\tau\norm{ G'(y_N)}{\LLinv}^2+\frac{L_B\tau^2}{2}\norm{ G'(y_N)}{\LLinv}^2 .
	\end{align*}

Observe that $S(0)=U(0)=G(y_N)$, that $U(\tau)$ is decreasing around $\tau=0$ since ${\diff U}/{\diff \tau}(0)=-\norm{ G'(y_N)}{\LLinv}^2\le0$, and that the optimal step size to minimize $U$ is $1/L_B$ since ${\diff U}/{\diff \tau} (1/L_B)=0$. This implies that $S(s)\le U(s)\le U(0)$ for any $s\in[0,2/L_B]$. Moreover, for $s\in[0,1/L_B]$, we have
\begin{equation}
  \begin{aligned}
G(x_{N+1})&=S(s)\le U(s)=G(y_N)-s\norm{ G'(y_N)}{\LLinv}^2+\frac{L_B s^2}{2}\norm{ G'(y_N)}{\LLinv}^2
	\\
	&\le G(y_N)-\frac{s}{2}\norm{ G'(y_N)}{\LLinv}^2,    
  \end{aligned}
\label{est:descent}
\end{equation}
which is the desired descent property in $G$ from $y_N$ to $x_{N+1}$.

Now, we want to mimic the energy argument that we carried out in Section~\ref{sub:ExUniqueSmooth}. Substitute \eqref{PAGD-y-upd} into \eqref{PAGD-x-upd}, and add and subtract $x_k-x_{k-1}$, to obtain the discrete counterpart of \eqref{the-IVP}
\begin{equation}
x_{k+1}-2x_k+x_{k-1}+(1-\lambda)(x_k - x_{k-1})+s\LLinv G'(y_k)=0.
\label{eql1} 
\end{equation} 
Note that defining $x_{-1}:=x_0$ allows us to extend this equality to the case $k=0$.
Take the $\LL$--inner product of this identity with $x_k-x_{k-1}$ and add for $0\le k \le N$. Then, using \eqref{3pt-id+}, the first term telescopes to simplify
	\begin{align*}
&\quad \sum_{k=0}^{N}(x_{k+1}-2x_k+x_{k-1},x_k-x_{k-1} )_\LL
\\
&=\half \sum_{k=0}^{N}\left( \norm{x_{k+1}-x_k}{\LL}^2-\norm{x_{k+1}-2x_k+x_{k-1}}{\LL}^2-\norm{x_k-x_{k-1}}{\LL}^2 \right) 
	\\
&=\half  \norm{x_{N+1}-x_N}{\LL}^2 -\half \sum_{k=0}^{N}\norm{x_{k+1}-2x_k+x_{k-1}}{\LL}^2.
	\end{align*}
We leave the second term as it is. For the third term, using \eqref{PAGD-y-upd}, $G(y_k)-G(x_k) \le \pairing{G'(y_k)}{y_k-x_k} $ from convexity, and \eqref{est:descent}, it follows
\begin{equation}
  \begin{aligned}
&\quad s\sum_{k=0}^{N} \pairing{G'(y_k)}{x_k-x_{k-1}}= \frac{s}{\lambda}\sum_{k=0}^{N} \pairing{G'(y_k)}{y_k-x_{k}}
\\
&\ge\frac{s}{\lambda}\sum_{k=0}^{N}\left( G(y_k)-G(x_k)\right)
\ge \frac{s}{\lambda}\sum_{k=0}^{N}\left( G(x_{k+1})-G(x_k)+\frac{s}{2}\norm{G'(y_k)}{\LLinv}^2 \right)
\\
&=\frac{s}{\lambda}G(x_{N+1})-\frac{s}{\lambda}G(x_0) +\frac{s^2}{2\lambda}\sum_{k=0}^{N}\norm{G'(y_k)}{\LLinv}^2 .    
  \end{aligned}
\label{est5}
\end{equation}

Gathering all the three terms together and rearranging, we get
\begin{equation}
  \begin{aligned}
\half  \norm{x_{N+1}-x_N}{\LL}^2 +\frac{s}{\lambda}G(x_{N+1})
\le  
\frac{s}{\lambda}G(x_0)+\half\sum_{k=0}^{N}\norm{x_{k+1}-2x_k+x_{k-1}}{\LL}^2&  
\\
\quad  -\frac{s^2}{2\lambda}\sum_{k=0}^{N}\norm{G'(y_k)}{\LLinv}^2 -(1-\lambda)\sum_{k=0}^{N}\norm{x_k-x_{k-1}}{\LL}^2.&    
  \end{aligned}
\label{est8}
\end{equation}

Similarly, take the $\LL$--inner product of \eqref{eql1} with $x_{k+1}-x_k$ and sum over $0\le k \le N$. This time, use \eqref{3pt-id-} for the first term to get
	\begin{align*}
&\quad \sum_{k=0}^{N}(x_{k+1}-2x_k+x_{k-1},x_{k+1}-x_{k} )_\LL
\\
&=\half  \norm{x_{N+1}-x_N}{\LL}^2 +\half \sum_{k=0}^{N}\norm{x_{k+1}-2x_k+x_{k-1}}{\LL}^2.
\end{align*}
For the second term, using Cauchy-Schwarz and Young's inequality, we have
	\begin{align*}
&\quad(1-\lambda)\sum_{k=0}^{N}(x_k-x_{k-1},x_{k+1}-x_k)_\LL 
\\
&\ge -\frac{1-\lambda}{2}\sum_{k=0}^{N}\left( \norm{x_k-x_{k-1}}{\LL}^2+ \norm{x_{k+1}-x_{k}}{\LL}^2\right).
	\end{align*}
For the third term, use \eqref{PAGD-x-upd} and argue as in \eqref{est5} to get
	\begin{align*}
&\quad s\sum_{k=0}^{N} \pairing{G'(y_k)}{x_{k+1}-x_{k}}= s\sum_{k=0}^{N} \pairing{G'(y_k)}{y_k-s\LLinv G'(y_k)-x_{k}}
\\
&\ge -s^2\sum_{k=0}^{N} \norm{G'(y_k)}{\LLinv}^2 +s\sum_{k=0}^{N} \pairing{G'(y_k)}{y_k-x_{k}}
\\
&\ge sG(x_{N+1})-sG(x_0) -\frac{s^2}{2}\sum_{k=0}^{N}\norm{G'(y_k)}{\LLinv}^2.
\end{align*}
Gathering all these estimates we get
\begin{equation}
	\begin{aligned}
&\half  \norm{x_{N+1}-x_N}{\LL}^2+sG(x_{N+1})
\le sG(x_0) -\half \sum_{k=0}^{N}\norm{x_{k+1}-2x_k+x_{k-1}}{\LL}
\\
 &\quad +\frac{s^2}{2}\sum_{k=0}^{N}\norm{G'(y_k)}{\LLinv}^2
+\frac{1-\lambda}{2}\sum_{k=0}^{N}\left( \norm{x_k-x_{k-1}}{\LL}^2+ \norm{x_{k+1}-x_{k}}{\LL}^2\right).
\end{aligned}
\label{est7}
	\end{equation}
Add \eqref{est8} and \eqref{est7} and rearrange, then after some cancellations, it follows
	\begin{align}
&\quad \frac{1+\lambda}{2}\norm{x_{N+1}-x_N}{\LL}^2+s\left(1+\frac{1}{\lambda} \right)(G(x_{N+1})-G^*)
\nonumber
\\
&\le s\left(1+\frac{1}{\lambda}\right)(G(x_0)-G^*) 
-\frac{s^2}{2}\left(\frac{1}{\lambda}-1 \right)\sum_{k=0}^{N}\norm{G'(y_k)}{\LLinv}^2
\label{est:conv-residual}
\\
&\le s\left(1+\frac{1}{\lambda}\right)(G(x_0)-G^*),
	\end{align}
since $0<\lambda<1$. By removing the kinetic term from this estimate, strong convexity leads to
	\[
G(x_{0})-G^*\ge G(x_{N+1})-G^*\ge\frac{\mu}{2} \norm{x_{N+1}-x^*}{\LL}^2,
	\]
which implies
	\begin{equation}
\norm{x_{N+1}-x^*}{\LL}\le R_1,
\label{est9}
	\end{equation}
which, in turn, proves $x_{N+1}\in \mathfrak{B}$. Similarly, discarding the potential term, dividing through $\frac{s(1+\lambda)}{2}$, and using \eqref{est10}, we obtain
	\[
\norm{\frac{x_{N+1}-x_N}{\sz}}{\LL}\le \sqrt{\frac{2}{\lambda}(G(x_0)-G^*)}\le R_2 .
	\]
Then, from the definition of $v_{N+1}$ \eqref{PAGD-v-upd},
	\[
\norm{v_{N+1}-x^*}{\LL}\le \norm{x_{N}-x^*}{\LL}+\frac{1}{\eta}\norm{\frac{x_{N+1}-x_N}{\sz}}{\LL} \le R_1+\frac{1}{\eta}R_2=R,
\]
which implies $v_{N+1}\in \mathfrak{B}$. This completes the proof.
	\end{proof}
	
	\begin{rmk}[step size restriction]
		\label{rmk:sz-restriction}
The additional condition $s\le (\frac{r-1}{r+1})^2 \mu^{-1}$ on the step size is not restrictive at all in practice. For example, if $r=3$, we require that $s\le 1/4\mu$. The purpose of this condition is to bound $\lambda^{-1}$ as explained in the proof. However, $\lambda^{-1}$ becomes unbounded when $s\mu$ is close to $1$. If we set $s=1/L_B$, $s\mu$ is the (inverse) condition number and the (inverse) condition number being close to $1$ makes the problem more amenable because it means that $G$ is almost quadratic. Moreover, even from a theoretical point of view, as $r$ increases, the invariant set $\mathfrak{B}$ gets larger, which means $L_B^{-1}$ gets smaller, while $(\frac{r-1}{r+1})^2 \mu^{-1}$ approaches $\mu^{-1}$. Since $L_B^{-1}<\mu^{-1}$ (unless $G$ is perfectly quadratic), the second argument of the minimum in \eqref{eq:sRule} eventually becomes of no effect.
	\ermk\end{rmk}

\begin{cor}[convergence of residuals]
Assume that $G:\HH\goesto\RR$ is $\mu$--strongly convex and locally Lipschitz smooth. Suppose that PAGD, as described in Algorithm \ref{alg:PAGD}, is implemented with a step size that obeys condition \eqref{eq:sRule}, where $r>1$, $\mathfrak{B}$ is the invariant set given by \eqref{def:invariant-set}, and $L_B$ is the Lipschitz smoothness constant associated with $\mathfrak{B}$. In this setting, the residuals $\{G'(y_k)\}_{k\geq0}$ converge to zero in the $\LLinv$--norm at least $\ell^2$--fast. In other words,
	\begin{equation*}
	\sum_{k=0}^{\infty}\norm{G'(y_k)}{\LLinv}^2
	<\infty.
	\end{equation*} 
\end{cor}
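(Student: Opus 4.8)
The plan is to read the conclusion off the intermediate estimate \eqref{est:conv-residual}, which was already established along the way in the proof of Lemma~\ref{lem:local-Lip}. First I would invoke Lemma~\ref{lem:local-Lip}: under the step size rule \eqref{eq:sRule}, every iterate $x_k,y_k,v_k$ of PAGD lies in the invariant set $\mathfrak{B}$, so the local Lipschitz smoothness constant $L_B$ attached to $\mathfrak{B}$ is the operative one at each step; in particular the descent inequality \eqref{est:descent} is valid for every $k$, and hence so is the entire chain of manipulations that produces \eqref{est:conv-residual}, for every $N\ge 0$.

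Next I would recall \eqref{est:conv-residual}: for all $N\ge 0$,
\[
\frac{1+\lambda}{2}\norm{x_{N+1}-x_N}{\LL}^2 + s\left(1+\frac{1}{\lambda}\right)\bigl(G(x_{N+1})-G^*\bigr) \le s\left(1+\frac{1}{\lambda}\right)\bigl(G(x_0)-G^*\bigr) - \frac{s^2}{2}\left(\frac{1}{\lambda}-1\right)\sum_{k=0}^{N}\norm{G'(y_k)}{\LLinv}^2 .
\]
Since $G(x_{N+1})\ge G^*$ and $\norm{x_{N+1}-x_N}{\LL}^2\ge 0$, the entire left-hand side is nonnegative, so it can be discarded to give
\[
\frac{s^2}{2}\left(\frac{1}{\lambda}-1\right)\sum_{k=0}^{N}\norm{G'(y_k)}{\LLinv}^2 \le s\left(1+\frac{1}{\lambda}\right)\bigl(G(x_0)-G^*\bigr).
\]
It remains to check that the coefficient $\tfrac{1}{\lambda}-1$ is strictly positive. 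Because $\eta=\sqrt{\mu}$ in this section, the step size rule \eqref{eq:sRule} gives $\theta=\eta\sqrt{s}=\sqrt{\mu s}\le\frac{r-1}{r+1}<1$, so $\lambda=\frac{1-\theta}{1+\theta}\in(0,1)$ and $\frac{1}{\lambda}-1=\frac{1-\lambda}{\lambda}>0$. Dividing through yields a bound on the partial sums that is independent of $N$,
\[
\sum_{k=0}^{N}\norm{G'(y_k)}{\LLinv}^2 \le \frac{2(1+\lambda)}{s(1-\lambda)}\bigl(G(x_0)-G^*\bigr),
\]
and letting $N\to\infty$ gives the asserted $\ell^2$-summability of $\{G'(y_k)\}_{k\ge 0}$ in the $\LLinv$-norm.

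I do not expect a real obstacle in this argument; the one point that must not be glossed over is that \eqref{est:conv-residual} is known to hold for \emph{all} $N$, which is exactly what the invariant-set statement of Lemma~\ref{lem:local-Lip} secures. Without it, the local Lipschitz bound \eqref{est:descent} could conceivably fail at some step, and the telescoping energy estimate behind \eqref{est:conv-residual} would break down; with it, the present corollary is essentially a nonnegativity observation applied to an inequality already in hand.
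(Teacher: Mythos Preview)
Your proposal is correct and follows exactly the paper's own argument: rearrange \eqref{est:conv-residual}, drop the nonnegative terms on the left, divide by the positive coefficient $\tfrac{s^2}{2}(\tfrac{1}{\lambda}-1)$, and let $N\to\infty$. Your additional care in invoking Lemma~\ref{lem:local-Lip} to justify that \eqref{est:conv-residual} holds for every $N$, and in checking $\lambda\in(0,1)$, simply makes explicit what the paper leaves implicit.
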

\begin{proof}
Moving the summation term of \eqref{est:conv-residual} to the left hand side and dropping the other nonnegative terms, we have
	\begin{equation*}
\sum_{k=0}^{N}\norm{G'(y_k)}{\LLinv}^2
\le \frac{2(1+\lambda)}{s(1-\lambda)}(G(x_0)-G^*).
	\end{equation*}
Letting $N\goesto\infty$ completes the proof. 
\end{proof}

Of course, this result is far from optimal. An exponential convergence of the residuals in the $\LLinv$--norm will be proved in Corollary \ref{cor:PAGD-conv}.

We can now begin the proof of convergence \emph{per se}. 
We begin with an estimate for the discrete time derivative of the potential energy.

\begin{lem}[discrete derivative of potential energy]
\label{lem:disc-dt-G}
Let the objective $G:\HH\goesto\RR$ be $\mu$--strongly convex and locally Lipschitz smooth. Suppose that the step size in Algorithm~\ref{alg:PAGD} satisfies \eqref{eq:sRule}, where $r>1$, $\mathfrak{B}$ is the invariant set given by \eqref{def:invariant-set}, and $L_B$ is the Lipschitz smoothness constant associated with $\mathfrak{B}$. Then, we have that 	
\[
\oneover{\eta} \frac{G(x_{k+1})-G(x_k)}{\sz}\le -\frac{\sz}{2\eta} \norm{G'(y_k)}{\LLinv}^2 +\oneover{\theta}\pairing{G'(y_k)}{y_k-x_k}-\frac{\eta}{2\sz}\norm{x_k-y_k}{\LL}^2.	
\]
\end{lem}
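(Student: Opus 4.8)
The plan is to mimic, at the discrete level, the manipulation that in the continuous case led to the inequality \eqref{est-str-con1-P} and its role in \eqref{der-engy-neg-P}. The left-hand side is (up to the factor $1/\eta$) a discrete time derivative of the potential energy along the PAGD iteration, so the natural first move is to combine the descent estimate \eqref{est:descent} — which is exactly where the step size restriction $s\le L_B^{-1}$ and the invariance $y_k\in\mathfrak{B}$ from Lemma~\ref{lem:local-Lip} enter — with the convexity bound $G(y_k)-G(x_k)\le\pairing{G'(y_k)}{y_k-x_k}$. Concretely, I would write
\[
  G(x_{k+1})-G(x_k)=\bigl(G(x_{k+1})-G(y_k)\bigr)+\bigl(G(y_k)-G(x_k)\bigr)\le -\frac{s}{2}\norm{G'(y_k)}{\LLinv}^2+\pairing{G'(y_k)}{y_k-x_k},
\]
using \eqref{est:descent} on the first bracket and convexity on the second. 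Both terms are legitimate because all the relevant iterates lie in $\mathfrak{B}$ by Lemma~\ref{lem:local-Lip}, so the local Lipschitz constant $L_B$ and $s\le L_B^{-1}$ apply.

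The remaining work is purely algebraic: divide by $\eta\sz$ and recall $\theta=\eta\sz$, so that $\frac{s}{2\eta}\,\norm{G'(y_k)}{\LLinv}^2$ becomes $\frac{\sz}{2\eta}\,\norm{G'(y_k)}{\LLinv}^2$ (since $s/(\eta\sz)=\sz/\eta$), and $\frac{1}{\eta\sz}\pairing{G'(y_k)}{y_k-x_k}=\frac{1}{\theta}\pairing{G'(y_k)}{y_k-x_k}$. This recovers the first two terms on the right-hand side exactly. The only genuine content beyond bookkeeping is producing the extra negative term $-\frac{\eta}{2\sz}\norm{x_k-y_k}{\LL}^2$. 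Since we currently have an inequality $\le$, we are free to subtract a nonnegative quantity from the right side only if we can account for it — so in fact we must extract it from the slack already present. The place to look is the descent estimate: \eqref{est:descent} used the weaker bound $U(s)\le G(y_k)-\frac{s}{2}\norm{G'(y_k)}{\LLinv}^2$, but the sharp value is $U(s)=G(y_k)-s\norm{G'(y_k)}{\LLinv}^2+\frac{L_Bs^2}{2}\norm{G'(y_k)}{\LLinv}^2$, leaving a surplus of order $\tfrac{s}{2}(1-L_Bs)\norm{G'(y_k)}{\LLinv}^2\ge 0$; alternatively, one rewrites $\norm{G'(y_k)}{\LLinv}$ in terms of the step $x_{k+1}-y_k=-s\LLinv G'(y_k)$, whence $\norm{x_{k+1}-y_k}{\LL}=s\norm{G'(y_k)}{\LLinv}$, and then uses $\norm{x_k-y_k}{\LL}=\theta\norm{y_k-v_k}{\LL}$ from \eqref{xyv-rel} together with the relation between $y_k,x_k,v_k$ to bound $\norm{x_k-y_k}{\LL}$. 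I expect the main obstacle to be organizing this slack so that exactly $\frac{\eta}{2\sz}\norm{x_k-y_k}{\LL}^2$ appears with the right constant; a clean route is to apply convexity in the sharper three-point form $G(y_k)-G(x_k)\le\pairing{G'(y_k)}{y_k-x_k}-\tfrac{\mu}{2}\norm{y_k-x_k}{\LL}^2$ from \eqref{def:str-convex} (strong convexity, valid everywhere), giving the term $-\tfrac{\mu}{2}\norm{x_k-y_k}{\LL}^2$ directly, and then checking $\tfrac{\mu}{2\eta\sz}=\tfrac{\eta}{2\sz}$ since $\eta=\sqrt\mu$ by the standing assumption of Section~\ref{sec:PAGD-conv}. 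That identity $\mu=\eta^2$ is what makes the constant come out exactly as stated, so I would flag it explicitly.

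Thus the proof reduces to: (i) split $G(x_{k+1})-G(x_k)$ as above; (ii) bound the first piece by \eqref{est:descent}; (iii) bound the second piece by the strong-convexity inequality \eqref{def:str-convex} applied with $x\mapsto y_k$, $y\mapsto x_k$; (iv) divide by $\eta\sz$ and use $\theta=\eta\sz$, $\eta^2=\mu$ to identify all three terms. No step is hard individually; the only care needed is in step (iii), choosing the strong-convexity bound rather than plain convexity so that the $\norm{x_k-y_k}{\LL}^2$ term is generated, and in verifying that Lemma~\ref{lem:local-Lip} legitimately places $y_k,x_k,x_{k+1}\in\mathfrak{B}$ so that \eqref{est:descent} is available with constant $L_B$.
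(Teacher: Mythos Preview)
Your proposal is correct and, once you settle on steps (i)--(iv), it is essentially the paper's proof: the paper bounds $G(x_{k+1})\le G(y_k)-\tfrac{s}{2}\norm{G'(y_k)}{\LLinv}^2$ via the descent estimate (using $y_k\in\mathfrak{B}$ and $sL_B\le 1$), then applies the strong-convexity lower trap \eqref{def:str-convex} with $x=y_k$, $y=x_k$ to get $G(x_k)\ge G(y_k)+\pairing{G'(y_k)}{x_k-y_k}+\tfrac{\mu}{2}\norm{x_k-y_k}{\LL}^2$, subtracts, and multiplies through by $1/\theta=1/(\eta\sz)$ using $\eta^2=\mu$. Your detour through the slack in $U(s)$ and the relations \eqref{xyv-rel} is unnecessary; the $\norm{x_k-y_k}{\LL}^2$ term comes entirely from strong convexity, exactly as you conclude.
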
	
\begin{proof}
	Since we have an invariant set, $\mathfrak{B}$, we can utilize the local Lipschitz smoothness with respect to it. Combined with $sL_B\le1$, it leads to
\begin{align}
  G(x_{k+1})=G(y_k-s\LLinv G'(y_k))
  \le G(y_k)-\frac{s}{2}	\norm{G'(y_k)}{\LLinv}^2.
  \label{est:desc-step}
\end{align}
Using this estimate, the strong convexity of $G$  yields 
\begin{align*}
  G(x_{k})&\ge G(y_k) +\pairing{G'(y_k)}{x_k-y_k}+\frac{\mu}{2}\norm{x_k-y_k}{\LL}^2
  \\
  &\ge G(x_{k+1})+\frac{s}{2}	\norm{G'(y_k)}{\LLinv}^2	+\pairing{G'(y_k)}{x_k-y_k}+\frac{\mu}{2}\norm{x_k-y_k}{\LL}^2.
\end{align*}
Rearranging the last estimate, multiplying through by $1/\theta$, and recalling $\theta =\sqrt{s\mu}, \eta=\sqrt \mu$, we obtain the desired result.	
\end{proof}

We also need an analogue of \eqref{est1-ODE-P}, a certain relation derived from the scheme.

\begin{lem}[discrete analogue of \eqref{est1-ODE-P}]
\label{lem:disc-newODE}
The iterates constructed by PAGD, as described in Algorithm~\ref{alg:PAGD}, satisfy
\begin{multline}
\frac{\eta}{2\sz}\left( \norm{v_{k+1}-x^*}{\LL}^2- \norm{v_{k}-x^*}{\LL}^2\right)+\oneover{\theta}\pairing{G'(y_k)}{y_k-x_k}+\frac{\eta^2}{2}\norm{v_k-x^*}{\LL}^2 \\ 
-\frac{\eta}{2\sz}\norm{v_{k+1}-v_k}{\LL}^2 +\oneover{2s}\norm{y_k-x_k}{\LL}^2-\frac{\eta^2}{2}\norm{y_k-x^*}{\LL}^2+\pairing{G'(y_k)}{y_k-x^*}=0.
\label{est-scheme}
\end{multline}		
\end{lem}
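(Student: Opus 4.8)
The asserted identity is purely algebraic: it uses no property of $G$ beyond the definition \eqref{def-Linv-norm} of the $\LLinv$–norm, so neither strong convexity nor (local) Lipschitz smoothness of $G$ plays any role here. The plan is to reproduce, at the discrete level, the computation that produced \eqref{est1-ODE-P}. The first step is to rewrite \eqref{eql1} in a form mirroring \eqref{theV-ODE}. Subtracting two consecutive instances of \eqref{PAGD-v-upd} gives $\theta(v_{k+1}-v_k)=(x_{k+1}-x_k)-(1-\theta)(x_k-x_{k-1})$, hence
\[
x_{k+1}-2x_k+x_{k-1}=(x_{k+1}-x_k)-(x_k-x_{k-1})=\theta(v_{k+1}-v_k)-\theta(x_k-x_{k-1}).
\]
Substituting this into \eqref{eql1} and collecting the $(x_k-x_{k-1})$ terms, the coefficient becomes $(1-\lambda)-\theta=\theta\lambda$ (since $\lambda=\tfrac{1-\theta}{1+\theta}$), and $\lambda(x_k-x_{k-1})=y_k-x_k$ by \eqref{PAGD-y-upd}, so $\theta(v_{k+1}-v_k)+\theta(y_k-x_k)+s\LLinv G'(y_k)=0$. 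Dividing by $s$ and recalling $\theta=\eta\sz$ yields the discrete counterpart of \eqref{theV-ODE},
\[
\eta\,\frac{v_{k+1}-v_k}{\sz}+\eta\,\frac{y_k-x_k}{\sz}+\LLinv G'(y_k)=0 ,
\]
valid for all $k\ge 0$ under the convention $x_{-1}:=x_0$, just as for \eqref{eql1}.

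The second step is to take the $\LL$–inner product of this relation with $v_k-x^*$ --- the discrete analogue of pairing \eqref{theV-ODE} with $V(t)$ --- and expand the three resulting terms. By \eqref{3pt-id+} applied to $A=v_{k+1}-v_k$, $B=v_k-x^*$, the first term equals $\tfrac{\eta}{2\sz}\bigl(\norm{v_{k+1}-x^*}{\LL}^2-\norm{v_k-x^*}{\LL}^2\bigr)-\tfrac{\eta}{2\sz}\norm{v_{k+1}-v_k}{\LL}^2$. Since \eqref{xyv-rel} with $\theta=\eta\sz$ reads $\tfrac{y_k-x_k}{\sz}=\eta(v_k-y_k)$, the second term equals $\eta^2(v_k-y_k,v_k-x^*)_\LL$; applying \eqref{3pt-id-} to $A=v_k-y_k$, $B=v_k-x^*$ (so $A-B=x^*-y_k$) and using $\eta^2\norm{v_k-y_k}{\LL}^2=\tfrac1s\norm{y_k-x_k}{\LL}^2$ turns it into $\tfrac1{2s}\norm{y_k-x_k}{\LL}^2+\tfrac{\eta^2}{2}\norm{v_k-x^*}{\LL}^2-\tfrac{\eta^2}{2}\norm{y_k-x^*}{\LL}^2$. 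Finally, $(\LLinv G'(y_k),v_k-x^*)_\LL=\pairing{G'(y_k)}{v_k-x^*}$, and inserting $v_k-x^*=(y_k-x^*)+\tfrac1\theta(y_k-x_k)$ from \eqref{xyv-rel} splits this as $\pairing{G'(y_k)}{y_k-x^*}+\tfrac1\theta\pairing{G'(y_k)}{y_k-x_k}$. Adding the three expressions and equating the sum to zero is precisely \eqref{est-scheme}, up to a reordering of terms.

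The only delicate point is the bookkeeping in the first step: one must spot the rewriting of the second difference $x_{k+1}-2x_k+x_{k-1}$ in terms of $v_{k+1}-v_k$ and $x_k-x_{k-1}$ and then observe that the remaining coefficient collapses via $(1-\lambda)-\theta=\theta\lambda$ together with $\lambda(x_k-x_{k-1})=y_k-x_k$, thereby producing the clean three–term analogue of \eqref{theV-ODE}. Everything afterward is a mechanical application of the polarization identities \eqref{3pt-id-}--\eqref{3pt-id+} and of the duality $(\LLinv f,z)_\LL=\pairing{f}{z}$, with no analytic input whatsoever.
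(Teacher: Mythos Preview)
Your proof is correct and follows essentially the same approach as the paper: derive the discrete analogue of \eqref{theV-ODE}, take the $\LL$--inner product with $v_k-x^*$, and expand the three terms via \eqref{3pt-id-}--\eqref{3pt-id+} together with the relations \eqref{xyv-rel}. The only minor difference is in how you obtain \eqref{1line-scheme}: the paper substitutes \eqref{PAGD-x-upd} into \eqref{PAGD-v-upd} and invokes \eqref{xyv-rel} directly, whereas you difference two consecutive instances of \eqref{PAGD-v-upd} and feed the result into \eqref{eql1}, collapsing the coefficient via $(1-\lambda)-\theta=\theta\lambda$; both routes yield the same identity.
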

\begin{proof}
Substituting \eqref{PAGD-x-upd} in \eqref{PAGD-v-upd}, and using the relations \eqref{xyv-rel}, we have
\[
v_{k+1}=x_k+\oneover{\theta}(y_k-x_k)-\frac{\sz}{\eta}\LLinv G'(y_k)
=x_k+v_k-y_k-\frac{\sz}{\eta}\LLinv G'(y_k).
\]
Rearranging, and multiplying through by $\frac{\eta}{\sz}$, we obtain the discrete analogue of the ODE \eqref{theV-ODE}
\begin{equation}
  \eta \frac{v_{k+1}-v_k}{\sz}+\eta\frac{y_k-x_k}{\sz}+\LLinv G'(y_k)=0.
\label{1line-scheme}	
\end{equation}
The discrete analogue of $V(t)$ is $v_k-x^*$, so following the proof of Theorem~\ref{thm:Lyap-decay}, we now take the $\LL$--inner product of \eqref{1line-scheme} with $v_k-x^*$ to obtain
\begin{equation}
  \frac{\eta}{\sz}\iprd{v_{k+1}-v_k}{v_k-x^*}_\LL+\frac{\eta}{\sz}\iprd{y_{k}-x_k}{v_k-x^*}_\LL+\pairing{G'(y_k)}{v_k-x^*}=0.
\label{est3}
\end{equation}
Using \eqref{3pt-id+}, the first term can be rewritten as
\begin{multline*}
  \frac{\eta}{\sz}\iprd{v_{k+1}-v_k}{v_k-x^*}_\LL= \\
  \frac{\eta}{2\sz}\left( \norm{v_{k+1}-x^*}{\LL}^2- \norm{v_{k}-x^*}{\LL}^2\right)-\frac{\eta}{2\sz}\norm{v_{k+1}-v_k}{\LL}^2. 
\end{multline*}
For the second term in \eqref{est3}, we use relations \eqref{xyv-rel}, and then the identity \eqref{3pt-id-} to get
\begin{align*}
\frac{\eta}{\sz}\iprd{y_{k}-x_k}{v_k-x^*}_\LL&=\frac{\eta}{\sz}\oneover{\theta}\iprd{y_{k}-x_k}{\theta v_k-\theta x^*}_\LL
\\
&=\oneover{s}\iprd{y_{k}-x_k}{y_k-x_k+\theta(y_k-x^*)}_\LL
\\
&=\oneover{2s}\norm{y_k-x_k}{\LL}^2+\frac{\eta^2}{2}\norm{v_k-x^*}{\LL}^2   
-\frac{\eta^2}{2}\norm{y_k-x^*}{\LL}^2.
\end{align*}
Finally, for the third term of \eqref{est3}, we use \eqref{xyv-rel} similarly to the above, then it follows
\begin{align*}
\pairing{G'(y_k)}{v_k-x^*}&=\oneover{\theta}	\pairing{G'(y_k)}{\theta v_k-\theta x^*}
\\
&=\oneover{\theta}	\pairing{G'(y_k)}{y_k-x_k}+	\pairing{G'(y_k)}{y_k-x^*}.
\end{align*}
Then, the desired result follows upon combining the last three identities.		
\end{proof}

We need one more relation between the iterates.
 
\begin{lem}[relation between iterates]
\label{lem:vk1-vk}
The iterates constructed by PAGD, as described in Algorithm~\ref{alg:PAGD}, satisfy
\[
  \frac{\eta}{2\sz}\norm{v_{k+1}-v_k}{\LL}^2=	\frac{\eta}{2\sz}\norm{x_k-y_k}{\LL}^2+\frac{\sz}{2\eta}\norm{G'(y_k)}{\LLinv}^2+\pairing{G'(y_k)}{y_k-x_k}.
\]
\end{lem}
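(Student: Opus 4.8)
The plan is to expand a single square. During the proof of Lemma~\ref{lem:disc-newODE} it was shown (equivalently, this is \eqref{1line-scheme} multiplied through by $\sz/\eta$) that
\[
v_{k+1}-v_k = x_k-y_k-\frac{\sz}{\eta}\LLinv G'(y_k).
\]
Taking the $\LL$--norm squared of both sides and expanding the binomial via \eqref{3pt-id-} gives
\[
\norm{v_{k+1}-v_k}{\LL}^2 = \norm{x_k-y_k}{\LL}^2 - \frac{2\sz}{\eta}\bigl(x_k-y_k,\ \LLinv G'(y_k)\bigr)_\LL + \frac{s}{\eta^2}\norm{\LLinv G'(y_k)}{\LL}^2 .
\]

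The only thing to notice is how the two $\LL$--dependent terms on the right reduce to the duality pairing and the $\LLinv$--norm. For the middle term, the symmetry of $(\cdot,\cdot)_\LL$ together with the fact that $\LL\LLinv$ is the identity on $\HH'$ give, via the definition \eqref{def-L-iprd}, $\bigl(x_k-y_k,\ \LLinv G'(y_k)\bigr)_\LL = \bigl(\LLinv G'(y_k),\ x_k-y_k\bigr)_\LL = \pairing{G'(y_k)}{x_k-y_k} = -\pairing{G'(y_k)}{y_k-x_k}$; for the last term, $\norm{\LLinv G'(y_k)}{\LL}^2 = \norm{G'(y_k)}{\LLinv}^2$ by \eqref{L-Linv-rel}. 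Substituting these in yields
\[
\norm{v_{k+1}-v_k}{\LL}^2 = \norm{x_k-y_k}{\LL}^2 + \frac{2\sz}{\eta}\pairing{G'(y_k)}{y_k-x_k} + \frac{s}{\eta^2}\norm{G'(y_k)}{\LLinv}^2 .
\]

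Finally I would multiply through by $\eta/(2\sz)$ and simplify the coefficients using $s=(\sz)^2$, hence $s/\sz=\sz$: the first term becomes $\tfrac{\eta}{2\sz}\norm{x_k-y_k}{\LL}^2$, the middle one collapses to $\pairing{G'(y_k)}{y_k-x_k}$, and the last becomes $\tfrac{s}{2\sz\eta}\norm{G'(y_k)}{\LLinv}^2 = \tfrac{\sz}{2\eta}\norm{G'(y_k)}{\LLinv}^2$, which is precisely the asserted identity. There is no genuine obstacle here; the computation is routine once the displayed formula for $v_{k+1}-v_k$ is recalled, and the only bookkeeping is the passage between the $\LL$--inner product on $\HH$ and the duality pairing on $\HH'$ through \eqref{def-L-iprd} and \eqref{L-Linv-rel}.
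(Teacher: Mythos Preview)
Your proof is correct and follows essentially the same route as the paper: both obtain $v_{k+1}-v_k = x_k-y_k-\tfrac{\sz}{\eta}\LLinv G'(y_k)$, take the $\LL$--norm squared, and scale by $\eta/(2\sz)$. The only cosmetic difference is that the paper re-derives this expression from \eqref{PAGD-v-upd} and \eqref{xyv-rel} rather than citing \eqref{1line-scheme}, and it leaves the conversions $(\cdot,\LLinv\cdot)_\LL=\pairing{\cdot}{\cdot}$ and $\norm{\LLinv\cdot}{\LL}=\norm{\cdot}{\LLinv}$ implicit where you spell them out.
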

\begin{proof}
Combine \eqref{PAGD-v-upd} and  the relations \eqref{xyv-rel}, and then use \eqref{PAGD-x-upd} to obtain
\begin{align*}
  v_{k+1}-v_k&=	x_k+\frac{1}{\theta}(x_{k+1}-x_k)+\oneover{\theta}x_k-\left(1+\oneover{\theta} \right)y_k
  \\
  &=x_k-y_k+\oneover{\theta}(x_{k+1}-y_k)=x_k-y_k-\frac{\sz}{\eta}\LLinv G'(y_k)	.
\end{align*}	
Take $\LL$--norm square on both sides and then multiply by $\frac{\eta}{2\sz}$. 	
\end{proof}

We are now in a position to prove the main result of this section, the exponential convergence of PAGD using energy arguments. The following result and its consequences are another of the main contributions of this work. To state it, we recall that the Lyapunov function of \eqref{the-IVP} $E:\HH^2\goesto\RR$ is defined in \eqref{lyapunov} and that $E_0$ is its value at the initial state as mentioned in Theorem \ref{thm:Lyap-decay}. 

\begin{thm}[exponential decay]
\label{thm:exp-conv}
Let the objective $G:\HH\goesto\RR$ be locally Lipschitz smooth and $\mu$--strongly convex. If PAGD, as described in Algorithm~\ref{alg:PAGD}, is applied to approximate $x^*=\argmin_{x\in\HH}G(x)$ with a step size satisfying \eqref{eq:sRule}, where $r>1$, $\mathfrak{B}$ is the invariant set given by \eqref{def:invariant-set}, and $L_B$ is the Lipschitz smoothness constant associated with $\mathfrak{B}$, then the Lyapunov function \eqref{lyapunov} decays exponentially along the iterates $\{x_k\}_{k\ge0}$. More specifically, for $k\geq0$, we have
\begin{equation}
  E(x_{k+1},v_{k+1}-x^*) \le (1-\theta)	E(x_{k},v_{k}-x^*),
  \quad
  E(x_{k},v_{k}-x^*) \le (1-\theta)^k E_0.
\label{PAGDC-total-dec}
\end{equation}	
\end{thm}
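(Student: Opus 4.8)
The plan is to mimic, at the discrete level, the Lyapunov argument of Theorem~\ref{thm:Lyap-decay}, using the three auxiliary lemmas just established (Lemma~\ref{lem:disc-dt-G}, Lemma~\ref{lem:disc-newODE}, Lemma~\ref{lem:vk1-vk}) as the discrete analogues of equations \eqref{est1-ODE-P}, \eqref{est-str-con1-P}, and the energy bookkeeping in the continuous proof. The key point is that the invariant set $\mathfrak B$ from Lemma~\ref{lem:local-Lip} is available, so we may freely invoke $L_B$--smoothness and $\mu$--strong convexity along all iterates; in particular Lemma~\ref{lem:disc-dt-G} holds. Recall $\eta=\sqrt\mu$, $\theta=\eta\sqrt s=\sqrt{s\mu}$, so $\theta\in(0,1)$ by the step size rule, and $E(x,v-x^*)=\tfrac{\eta}{2}\norm{v-x^*}{\LL}^2+\tfrac1\eta(G(x)-G^*)$.

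First I would compute the discrete derivative of the inflated energy. The natural discrete analogue of $\dot{\mathcal E}(t)=e^{\eta t}[\tfrac{\eta^2}{2}\norm V{\LL}^2+\eta(V,\dot V)_\LL+(G(X)-G^*)+\tfrac1\eta\pairing{G'(X)}{\dot X}]$ is obtained by forming $\tfrac1{\sz}\bigl(E(x_{k+1},v_{k+1}-x^*)-E(x_k,v_k-x^*)\bigr)$, i.e.
\[
\frac{1}{\sz}\Bigl(\frac{\eta}{2}\norm{v_{k+1}-x^*}{\LL}^2-\frac{\eta}{2}\norm{v_k-x^*}{\LL}^2\Bigr)+\frac{1}{\eta}\,\frac{G(x_{k+1})-G(x_k)}{\sz}.
\]
To the first group I apply Lemma~\ref{lem:disc-newODE} to solve for $\tfrac{\eta}{2\sz}(\norm{v_{k+1}-x^*}{\LL}^2-\norm{v_k-x^*}{\LL}^2)$ in terms of $\pairing{G'(y_k)}{y_k-x_k}$, $\pairing{G'(y_k)}{y_k-x^*}$, the norms $\norm{v_k-x^*}{\LL}^2$, $\norm{y_k-x^*}{\LL}^2$, $\norm{y_k-x_k}{\LL}^2$, and $\norm{v_{k+1}-v_k}{\LL}^2$; the last of these I eliminate with Lemma~\ref{lem:vk1-vk}. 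To the second group I apply Lemma~\ref{lem:disc-dt-G}, which bounds $\tfrac1\eta\tfrac{G(x_{k+1})-G(x_k)}{\sz}$ above by $-\tfrac{\sz}{2\eta}\norm{G'(y_k)}{\LLinv}^2+\tfrac1\theta\pairing{G'(y_k)}{y_k-x_k}-\tfrac{\eta}{2\sz}\norm{x_k-y_k}{\LL}^2$. Adding these, the terms $\tfrac1\theta\pairing{G'(y_k)}{y_k-x_k}$, the $\norm{v_{k+1}-v_k}{\LL}^2$ contributions, the $\norm{G'(y_k)}{\LLinv}^2$ contributions, and the $\norm{x_k-y_k}{\LL}^2$ pieces should cancel in the same pattern as the continuous computation \eqref{der-engy-neg-P}, leaving (schematically)
\[
\frac{E(x_{k+1},v_{k+1}-x^*)-E(x_k,v_k-x^*)}{\sz}\le \frac{\eta^2}{2}\norm{v_k-x^*}{\LL}^2-\frac{\eta^2}{2}\norm{y_k-x^*}{\LL}^2-\pairing{G'(y_k)}{y_k-x^*}+\tfrac1\eta\bigl(G(x_k)-G^*\bigr)+(\text{error}),
\]
where the error terms are the leftover $\norm{y_k-x_k}{\LL}^2$ and $\norm{G'(y_k)}{\LLinv}^2$ contributions that do not cancel exactly because the discrete identities carry extra factors; these should be non-positive under $sL_B\le1$ and need to be checked carefully.

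Next I would invoke the lower quadratic trap \eqref{def:str-convex} at the base point $y_k$: $G^*-G(y_k)-\pairing{G'(y_k)}{x^*-y_k}\ge\tfrac\mu2\norm{y_k-x^*}{\LL}^2$, equivalently $G(y_k)-G^*-\pairing{G'(y_k)}{y_k-x^*}\le -\tfrac\mu2\norm{y_k-x^*}{\LL}^2$, the exact analogue of \eqref{est-str-con1-P}. Combined with the descent step \eqref{est:desc-step} (so $G(x_{k+1})\le G(y_k)$) and with $\eta^2=\mu$, this converts the right-hand side above into $-\tfrac\eta\cdot\bigl(E(x_k,v_k-x^*)\text{ minus a non-negative remainder}\bigr)$ — more precisely I expect to land on
\[
\frac{E(x_{k+1},v_{k+1}-x^*)-E(x_k,v_k-x^*)}{\sz}\le -\eta\, E(x_k,v_k-x^*)+(\text{non-positive remainder}),
\]
so that $E(x_{k+1},v_{k+1}-x^*)\le(1-\eta\sz)\,E(x_k,v_k-x^*)=(1-\theta)\,E(x_k,v_k-x^*)$, which is the first inequality in \eqref{PAGDC-total-dec}. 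Iterating this one-step contraction from $k$ down to $0$, and using $E(x_0,v_0-x^*)=E(x_0,x_0-x^*)=E_0$ (since $v_0=x_0$), gives $E(x_k,v_k-x^*)\le(1-\theta)^k E_0$, the second inequality.

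The main obstacle, as in all such discrete Lyapunov proofs, is the accounting of the residual-type error terms: at the continuous level the computation \eqref{der-engy-neg-P} is an exact identity up to the single strong-convexity inequality, whereas the discrete identities of Lemmas~\ref{lem:disc-dt-G}, \ref{lem:disc-newODE}, \ref{lem:vk1-vk} each introduce $O(s)$ discrepancies (the $\norm{x_k-y_k}{\LL}^2$, $\norm{v_{k+1}-v_k}{\LL}^2$ and $\norm{G'(y_k)}{\LLinv}^2$ terms with mismatched coefficients $\tfrac{\eta}{2\sz}$ vs $\tfrac1{2s}$, $\tfrac{\sz}{2\eta}$ vs $\tfrac s2$, etc.). I would have to verify that after all substitutions these collect into a manifestly non-positive combination — this is precisely where the step size restriction $s\le L_B^{-1}$ (equivalently $sL_B\le1$, used already in \eqref{est:desc-step}) must be used to absorb a $(1-sL_B)\norm{G'(y_k)}{\LLinv}^2$-type term, and where the bound on $\lambda^{-1}$ from \eqref{est10} may be needed — and that no hidden positive term of order $s$ survives to spoil the clean factor $(1-\theta)$.
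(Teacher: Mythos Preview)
Your plan is essentially the paper's proof: the paper defines the inflated energy $\mathcal{E}_k=(1-\theta)^{-k}E(x_k,v_k-x^*)$ and shows $\mathcal{E}_{k+1}-\mathcal{E}_k\le0$, which is algebraically identical to your target $E_{k+1}-E_k\le-\theta E_k$, and it uses exactly Lemmas~\ref{lem:disc-dt-G}, \ref{lem:disc-newODE}, \ref{lem:vk1-vk} in exactly the order you propose.

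There is one concrete inaccuracy in your bookkeeping, though, that would stall you if you tried to execute the plan as written. After you substitute Lemma~\ref{lem:disc-newODE} and Lemma~\ref{lem:disc-dt-G} and then eliminate $\norm{v_{k+1}-v_k}{\LL}^2$ via Lemma~\ref{lem:vk1-vk}, \emph{all} of the $\norm{G'(y_k)}{\LLinv}^2$ contributions cancel exactly, and so do the $\tfrac{\eta}{2\sz}\norm{x_k-y_k}{\LL}^2$ terms; no ``$(1-sL_B)\norm{G'(y_k)}{\LLinv}^2$--type'' term survives. What remains, after moving $\eta E_k$ to the left, is
\[
G(x_k)-G^*+\pairing{G'(y_k)}{y_k-x_k}-\pairing{G'(y_k)}{y_k-x^*}+\frac{\eta^2}{2}\norm{y_k-x^*}{\LL}^2-\frac{1}{2s}\norm{y_k-x_k}{\LL}^2.
\]
The lower quadratic trap at $y_k$ (which you correctly invoke) handles $G(y_k)-G^*-\pairing{G'(y_k)}{y_k-x^*}\le-\tfrac{\mu}{2}\norm{y_k-x^*}{\LL}^2$, and since $\eta^2=\mu$ the $\norm{y_k-x^*}{\LL}^2$ terms vanish. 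But you are still left with $G(x_k)-G(y_k)+\pairing{G'(y_k)}{y_k-x_k}-\tfrac{1}{2s}\norm{y_k-x_k}{\LL}^2$, and this is where the \emph{upper} quadratic trap \eqref{LipDer-quad-pre} (not the descent step, and not the $\lambda^{-1}$ bound from \eqref{est10}, which plays no role here) is needed: it gives $G(x_k)-G(y_k)+\pairing{G'(y_k)}{y_k-x_k}\le\tfrac{L_B}{2}\norm{x_k-y_k}{\LL}^2$, and the whole expression collapses to $\tfrac12(L_B-\tfrac1s)\norm{y_k-x_k}{\LL}^2\le0$ under $s\le L_B^{-1}$. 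Your schematic display also has the signs of $\norm{v_k-x^*}{\LL}^2$ and $\norm{y_k-x^*}{\LL}^2$ reversed and a stray $\tfrac1\eta$ factor, but those are minor once you redo the algebra with the correct surviving terms in mind.
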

\begin{proof}
Define, for $k \geq 0$, $\mathcal{E}_k=(1-\theta)^{-k} E(x_k, v_k-x^*)$, which is the discrete analogue of the exponentially inflated energy in the the proof of Theorem~\ref{thm:Lyap-decay}. To simplify notation, we set $C_{\theta,k}=(1-\theta)^{-(k+1)}>0$. Then, similarly to the ODE case, one can show the discrete time derivative of $\mathcal{E}_k$ is nonpositive as follows. First, we simply use the forward difference time derivative, rearrange, and use Lemma \ref{lem:disc-dt-G} to get
\begin{align*}
\frac{\mathcal{E}_{k+1}-\mathcal{E}_k}{\sz} &=
\oneover{\sz}\left[  (1-\theta)^{-(k+1)}\left(\oneover{\eta} (G(x_{k+1}) -G^*) +\frac{\eta}{2}\norm{v_{k+1}-x^*}{\LL}^2\right)  \right. \\
&- \left. (1-\theta)^{-k}  \left(\oneover{\eta}(G(x_{k})-G^*)+\frac{\eta}{2}\norm{v_{k}-x^*}{\LL}^2\right) \right] \\
&= C_{\theta,k} \left[ \oneover{\eta}\frac{G(x_{k+1})-G(x_k)}{\sz}+(G(x_k)-G^*) \right. \\
&+\left. \frac{\eta}{2\sz}(\norm{v_{k+1}-x^*}{\LL}^2-\norm{v_{k}-x^*}{\LL}^2)+\frac{\eta^2}{2}\norm{v_k-x^*}{\LL}^2 \right] \\
&\le C_{\theta,k} \left[-\frac{\sz}{2\eta} \norm{G'(y_k)}{\LLinv}^2 -\frac{\eta}{2\sz}\norm{x_k-y_k}{\LL}^2+(G(x_k)-G^*) \right.\\
& +\oneover{\theta}\pairing{G'(y_k)}{y_k-x_k}+\frac{\eta}{2\sz}(\norm{v_{k+1}-x^*}{\LL}^2-\norm{v_{k}-x^*}{\LL}^2) \\
&+ \left. \frac{\eta^2}{2}\norm{v_k-x^*}{\LL}^2 \right].
\end{align*}
We continue by using Lemma \ref{lem:disc-newODE} and then Lemma \ref{lem:vk1-vk}, then it follows
\begin{align*}
\frac{\mathcal{E}_{k+1}-\mathcal{E}_k}{\sz} &\le C_{\theta,k} \left[-\frac{\sz}{2\eta} \norm{G'(y_k)}{\LLinv}^2 -(\frac{\eta}{2\sz}+\frac{1}{2s})\norm{x_k-y_k}{\LL}^2+(G(x_k)-G^*) \right.\\
&\left.+\frac{\eta}{2\sz}\norm{v_{k+1}-v_k}{\LL}^2 +\frac{\eta^2}{2}\norm{y_k-x^*}{\LL}^2+\pairing{G'(y_k)}{x^*-y_k} \right]
\\
&= C_{\theta,k} \left[- \oneover{2s}\norm{y_k-x_k}{\LL}^2+(G(x_k)-G^*) 	+\pairing{G'(y_k)}{y_k-x_k} \right. \\
& \left. +\pairing{G'(y_k)}{x^*-y_k} +\frac{\eta^2}{2}\norm{y_k-x^*}{\LL}^2 \right].
\end{align*}
Finally, add and subtract $G(y_k)$ from the last expression, and use the following estimates, which are simple rearrangements of the lower and upper quadratic traps,
	\begin{align*}
	G(y_k)-G^*+\pairing{G'(y_k)}{x^*-y_k}\le-\frac{\mu}{2}\norm{y_k-x^*}{\LL}^2
	\\
	G(x_k)-G(y_k)+\pairing{G'(y_k)}{y_k-x_k}\le\frac{L_B}{2}\norm{x_k-y_k}{\LL}^2,
	\end{align*}
then we arrive at
\begin{align*}
\frac{\mathcal{E}_{k+1}-\mathcal{E}_k}{\sz} &\le C_{\theta,k} \left[G(x_k)-G(y_k) 	+\pairing{G'(y_k)}{y_k-x_k}+G(y_k)-G^* \right. \\
&+ \left. \pairing{G'(y_k)}{x^*-y_k} - \oneover{2s}\norm{y_k-x_k}{\LL}^2+\frac{\eta^2}{2}\norm{y_k-x^*}{\LL}^2 \right]	\\
&\le C_{\theta,k} \left[ \half\left(L_B-\oneover{s}\right)\norm{y_k-x_k}{\LL}^2 \right].
\end{align*}   
The step size condition forces the last term to be nonpositive. Therefore, we conclude that $\{\mathcal{E}_{k}\}_{k\ge0}$ is nonincreasing, from which we obtain \eqref{PAGDC-total-dec}.
\end{proof}

The following estimates are evident.

\begin{cor}[rate of convergence] \label{cor:PAGD-conv}
In the setting of Theorem~\ref{thm:exp-conv}, we have that the iterates of PAGD, as described in Algorithm~\ref{alg:PAGD}, converge to $x^*$, the minimizer of $G$, at an exponential rate. More specifically, for a suitable $r>1$ the step size can be set $s=1/L_B$ and, in this case, for $k\geq 0$,
\begin{equation}
  \oneover{\eta}(G(x_{k})-G^*) + \frac{\eta}{2} \norm{v_{k}-x^*}{\LL}^2 \le \left(1-\sqrt{\rho}\right)^{k} {E}_0,
\label{PAGDC-G-err-decay}	
\end{equation}
which implies
\begin{align}
  G(x_k)-G^*\le \left(1-\sqrt{\rho}\right)^{k} {\eta}	{E_0},
  \qquad
  \norm{x_k-x^*}{\LL} \le \left(1-\sqrt{\rho}\right)^\frac{k}{2}\sqrt{\frac{2{E}_0}{\eta}}.
\label{PAGDC-err-decay}	
\end{align}
Furthermore, we have exponential convergence in the $\LLinv$--norm of the residuals: for $k\geq 0$, 
	\begin{align}
\norm{G'(y_k)}{\LLinv}\le 3L_B \sqrt{\frac{2{E}_0}{\eta}}\left(1-\sqrt{\rho}\right)^\frac{k-1}{2}.
	\label{PAGD-G'-decay}
\end{align}
\end{cor}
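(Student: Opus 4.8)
The plan is to derive each of the three displayed estimates directly from the two inequalities of Theorem~\ref{thm:exp-conv}, \eqref{PAGDC-total-dec}, by unpacking the definition of the Lyapunov function $E$ in \eqref{lyapunov} and choosing the step size appropriately. First I would argue that, by Remark~\ref{rmk:sz-restriction}, one may pick $r>1$ large enough that $(\tfrac{r-1}{r+1})^2\mu^{-1}\ge L_B^{-1}$, so that the step-size constraint \eqref{eq:sRule} permits the choice $s=1/L_B$; with $\eta=\sqrt\mu$ this gives $\theta=\eta\sqrt s=\sqrt{s\mu}=\sqrt{\mu/L_B}=\sqrt\rho$. Substituting $\theta=\sqrt\rho$ into the second inequality of \eqref{PAGDC-total-dec} and writing out $E(x_k,v_k-x^*)=\tfrac1\eta(G(x_k)-G^*)+\tfrac\eta2\norm{v_k-x^*}{\LL}^2$ immediately yields \eqref{PAGDC-G-err-decay}.

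Next, \eqref{PAGDC-err-decay} follows by discarding one of the two nonnegative summands on the left of \eqref{PAGDC-G-err-decay}: keeping only $\tfrac1\eta(G(x_k)-G^*)$ and multiplying by $\eta$ gives the first bound; keeping only $\tfrac\eta2\norm{v_k-x^*}{\LL}^2$, multiplying by $2/\eta$, and taking square roots gives the second—here I would also invoke Lemma~\ref{lem:local-Lip} or strong convexity to relate $\norm{x_k-x^*}{\LL}$ to the Lyapunov quantity; in fact, since by Lemma~\ref{lem:xyv-rel} and the invariant-set argument the relevant quantities are controlled, the cleanest route is to note $x_k=v_k$ would not hold in general, so instead I'd observe that strong convexity applied at $x_k$ gives $\tfrac\mu2\norm{x_k-x^*}{\LL}^2\le G(x_k)-G^*\le \eta(1-\sqrt\rho)^k E_0$, hence $\norm{x_k-x^*}{\LL}\le(1-\sqrt\rho)^{k/2}\sqrt{2\eta E_0/\mu}=(1-\sqrt\rho)^{k/2}\sqrt{2E_0/\eta}$ using $\eta=\sqrt\mu$.

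For the residual bound \eqref{PAGD-G'-decay}, the plan is to estimate $\norm{G'(y_k)}{\LLinv}$ through the Lipschitz smoothness of $G$ on $\mathfrak B$ together with $G'(x^*)=0$: since $y_k\in\mathfrak B$ by Lemma~\ref{lem:local-Lip}, \eqref{def:Lip-str}-type control (or, more carefully, the inequality $\norm{G'(y_k)}{\LLinv}\le L_B\norm{y_k-x^*}{\LL}$ that follows from $L_B$-smoothness on the convex set $\mathfrak B$ and $G'(x^*)=0$) reduces matters to bounding $\norm{y_k-x^*}{\LL}$. Then $y_k-x^*=\tfrac1{1+\theta}(x_k-x^*)+\tfrac\theta{1+\theta}(v_k-x^*)$ from Lemma~\ref{lem:xyv-rel}, so $\norm{y_k-x^*}{\LL}\le\norm{x_k-x^*}{\LL}+\norm{v_k-x^*}{\LL}$; bounding each term via \eqref{PAGDC-G-err-decay} (the first through strong convexity, the second directly) by a multiple of $(1-\sqrt\rho)^{k/2}\sqrt{2E_0/\eta}$, combining, and absorbing constants into the factor $3$, with one power of $(1-\sqrt\rho)^{-1/2}$ coming from shifting the exponent to $\tfrac{k-1}{2}$. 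The main obstacle I anticipate is the bookkeeping in this last step: one must be a little careful that the global $L_B$-smoothness inequality $\norm{G'(y_k)-G'(x^*)}{\LLinv}\le L_B\norm{y_k-x^*}{\LL}$ is legitimately available on $\mathfrak B$ (it is, since on a convex set the two notions of smoothness coincide for convex $G$ by the remark following Theorem~\ref{equi-precon}, provided $x^*\in\mathfrak B$, which holds) and that the numerical constant indeed collapses to $3$ rather than something larger—this amounts to checking $(1+\text{const})\le 3(1-\sqrt\rho)^{1/2}$ type inequalities and using $1-\sqrt\rho<1$, which is routine but is where a careless constant could slip in.
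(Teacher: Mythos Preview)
Your derivations of \eqref{PAGDC-G-err-decay} and \eqref{PAGDC-err-decay} coincide with the paper's: invoke Remark~\ref{rmk:sz-restriction} to set $s=1/L_B$, read off $\theta=\sqrt{\rho}$, and then use strong convexity to pass from the bound on $G(x_k)-G^*$ to one on $\norm{x_k-x^*}{\LL}$.

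For the residual bound \eqref{PAGD-G'-decay} your plan differs from the paper in two places, one harmless and one that needs care. The harmless difference is how you bound $\norm{y_k-x^*}{\LL}$: you use the convex combination of Lemma~\ref{lem:xyv-rel} in terms of $x_k$ and $v_k$, whereas the paper uses the original update $y_k=x_k+\lambda(x_k-x_{k-1})$ to write $\norm{y_k-x^*}{\LL}\le 2\norm{x_k-x^*}{\LL}+\norm{x_{k-1}-x^*}{\LL}$ and then applies the second estimate of \eqref{PAGDC-err-decay} at indices $k$ and $k-1$; this is where the factor $3$ and the exponent $\tfrac{k-1}{2}$ arise naturally. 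Your route actually yields the slightly stronger constant $2$ with exponent $\tfrac{k}{2}$, which of course implies the stated bound.

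The point that needs care is the step $\norm{G'(y_k)}{\LLinv}\le L_B\norm{y_k-x^*}{\LL}$. You invoke the \emph{strong} Lipschitz inequality \eqref{def:Lip-str} on $\mathfrak B$, appealing to the equivalence of the two smoothness notions for convex functions. The paper, however, only states that equivalence in the global case ($B=\HH$); on a bounded convex set the usual proof requires a gradient step that may leave the set, so you would owe an extra argument. The paper sidesteps this entirely: it first uses the descent inequality \eqref{est:desc-step} to get $\norm{G'(y_k)}{\LLinv}^2\le 2L_B(G(y_k)-G^*)$, and then applies the upper quadratic trap \eqref{LipDer-quad-pre} at $x^*$ (where $G'(x^*)=0$ and both $x^*,y_k\in\mathfrak B$) to obtain $G(y_k)-G^*\le \tfrac{L_B}{2}\norm{y_k-x^*}{\LL}^2$. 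Both ingredients use only the \emph{weak} local Lipschitz smoothness \eqref{def:L-smooth} that is actually assumed, and combining them gives exactly $\norm{G'(y_k)}{\LLinv}\le L_B\norm{y_k-x^*}{\LL}$.
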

\begin{proof}
We can choose an appropriate $r>1$ so that the step size condition \eqref{eq:sRule} reduces to $s\in(0,L_B^{-1}]$; see Remark \ref{rmk:sz-restriction}. Estimate \eqref{PAGDC-G-err-decay} and the first estimate of \eqref{PAGDC-err-decay} follow from \eqref{PAGDC-total-dec} upon setting $s = 1/L_B$. The second estimate of \eqref{PAGDC-err-decay} follows by applying strong convexity of $G$ to the first estimate of \eqref{PAGDC-err-decay}.

Next, from the estimate \eqref{est:desc-step}, one obtains 
	\begin{equation}
G^*\le G(x_{k+1})\le G(y_k)-\frac{1}{2L_B}\norm{G'(y_k)}{\LLinv}^2,
	\end{equation}
from which, one obtains the following by rearranging and then using the upper quadratic trap
	\begin{align}
	\norm{G'(y_k)}{\LLinv}\le\sqrt{2L_B (G(y_k)-G^*)}\le L_B\norm{y_k-x^*}{\LL}.
	\nonumber 
	\end{align}
In addition, we also have, from the definition of $y_k$ and $0<\lambda<1$,
\begin{align*}
\norm{y_k-x^*}{\LL}=\norm{x_k-x^*+\lambda(x_k-x_{k-1}\pm x^*)}{\LL}\le 2\norm{x_k-x^*}{\LL}+\norm{x_{k-1}-x^*}{\LL}.
\end{align*}
Combining the last two estimates and using \eqref{PAGDC-err-decay}, we obtain \eqref{PAGD-G'-decay}.
\end{proof}

	\begin{rmk}[total energy]
	\label{rmk:each-egy-osc}
The exponential decrease of the ``total energy'' at every step, given in \eqref{PAGDC-G-err-decay}, does not imply that the ``potential energy'' $G(x_{k+1})-G^*$ or the ``kinetic energy'' $\frac{\mu}{2} \norm{v_{k+1}-x^*}{\LL}^2$ decay monotonically by themselves. Corollary \ref{cor:PAGD-conv} only asserts exponential bounds. The same is true for the decay of the $\LLinv$--norm of the residuals $\norm{G'(y_k)}{\LLinv}$. In fact, the numerical illustrations of Section~\ref{sec:numerical} show that these quantities may oscillate. 
	\ermk
	\end{rmk}

	\begin{rmk}[matching convergence rates]
	\label{rmk:rate-match}
As discussed in Section~\ref{sec:algorithms}, in the case of $G$ being locally Lipschitz smooth, $\mu-$strongly convex, the (best) contraction factor for PGD is $(\frac{1-\rho}{1+\rho})^2$ while we have $1-\sqrt \rho$ for PAGD (see Theorem \ref{thm:exp-conv}), where we recall $\rho=\mu/L_B$ and $L_B>0$ is the Lipschitz smoothness constant on some appropriate invariant set $\mathfrak B$. It must be pointed out that this rate for PGD is achieved by choosing a ``particularly good'' step size that is only available to PGD: $s=\frac{2}{L_B+\mu}$ (see \citep[Theorem 2.1.15]{nesterov_2014}). More specifically, we have a contraction factor $1-s\frac{2\mu L_B}{L_B+\mu}$ for PGD provided $0<s\le \frac{2}{L_B+\mu}$. If one uses the step size $s=1/{L_B}$, then the contraction factor for PGD turns out to be	$\frac{1-\rho}{1+\rho}$. This choice makes it easier to see the rate match the continuous time model. Setting $p=2$ in \citep[SI (Supplement Information) Theorem H.2]{wibisono2016} we see that the gradient flow
$ 
\dot X = -\LLinv G'(X)
$ 
	has convergence rate 	$G(X(t))-G^*\le (G(X(0))-G^*)e^{-\mu t}$. However, using an estimate available to $\mu$--strongly convex functions (see \citep[Theorem 2.1.10 (2.1.19)]{nesterov_2014}), we can do better to get $G(X(t))-G^*\le (G(X(0))-G^*)e^{-2\mu t}$. 
	Then, we see that setting $t=sk$ and $s=1/L_B$ for the gradient flow, and assuming $\rho\ll  1$, the contraction factor can be approximated by 
	\[
	e^{-2\mu s}\approx 1-2\mu s =1- 2\rho, 	  
	\]
which is close to $\frac{1-\rho}{1+\rho}$. Similarly, setting $t=\sz k$ and $s=1/L_B$ in \eqref{est:Lyap-decay} and referring to Corollary \ref{cor:PAGD-conv}, we have the contraction factors
	\[
	  e^{-\sqrt {\mu s}}\approx 1-\sqrt{\mu s} =1- \sqrt{\rho}
	\]
  for the IVP \eqref{the-IVP}, which matches that of PAGD.
	\ermk
	\end{rmk}

\section{Numerical Experiments} \label{sec:numerical}

In this section, we carry out a series of numerical experiments aimed at illustrating the theory that we have developed. In all our examples, we approximate the solution to the nonlinear PDE \eqref{model-pb} by iteratively minimizing an energy related to this PDE. The approximate solution is computed using a pseudo-spectral method (see \citep{canuto2007spectral, MR2867779}), which was implemented in an in-house Matlab R2016a\copyright{} code. This pseudo-spectral code heavily uses the built-in \texttt{fft} and \texttt{ifft} Matlab internal routines to invert preconditioners and apply residuals.
	
The energy minimization is carried out with GD, AGD, PGD, or PAGD, where the algorithms terminates if one of the following cases is true:
\begin{enumerate}[(a)]
  \item the $\infty$--norm (when the true solution is unknown) or the $\LL_N$--norm (when the true solution is known) of the search direction is smaller than a certain tolerance, which we will call \emph{convergence};
  
  \item the norm being measured is larger than a certain upper tolerance, which we will call \emph{blow up};
  
  \item the number of iterations reaches a certain number, which we will call \emph{no convergence}.
\end{enumerate}
In the conditions above, we mean by ``search directions'' the residual if the scheme does not involve a preconditioner. If the scheme involves a (discrete) preconditioner $\LL_N$ (see \eqref{def:dis-precon} for definition), the search direction is the solution to $\LL_N s=r$, where $r$ is the residual. In all implementations, the initial guess is always zero.

\subsection{The continuous problem}
We approximate the solution to the following ``nonlocal" PDE:
\begin{equation}
(-\lap)^\alpha u +|u|^{p-2}u + t u=f	\quad \text{ in } \Omega=(0,1)^2 \subset\RR^2,
\label{model-pb}
\end{equation}			
supplemented with periodic boundary conditions, where $\alpha>0$, $p\ge 2$, and $t>0$. Here and in what follows, all functions are real-valued except for the exponential functions appearing in Fourier series and Fourier coefficients. The nonlocal operator $(-\lap)^\alpha$ is the \emph{spectral} fractional Laplacian, which is defined via Fourier series as 

For every $v\in L^2_{\per}(\Omega)$, we have that
$
v(\x)=\sum_{\bfm\in\ZZ^2} \hat {v}_\bfm	e^{2\pi \fraki \bfm \cdot \x},
$ 
where the equality is in the $L^2(\Omega)$--sense, $\x=(x,y)\in\overline{\Omega}$, $\fraki = \sqrt{-1}$, and 
$
\hat{v}_\bfm =\int_{\Omega} v(\x) e^{-2\pi \fraki \bfm\cdot\x} \diff \x ,\quad \bfm\in\ZZ^2.
$ 

Thus, we define
\[
(-\lap)^\alpha  v(\x)=\sum_{\bfm\in\ZZ^2} \left( 4\pi^2 |\bfm|^2 \right)^\alpha \hat{v}_\bfm	e^{2\pi \fraki \bfm\cdot\x},
\]
provided that the sum is finite.

With this definition at hand, it is not difficult to see that \eqref{model-pb} in its weak form, can be seen as the Euler-Lagrange equation for the functional
\begin{equation}
G(u)= \int_\Omega \left(	\half|(-\lap)^\frac{\alpha}{2} u|^2 +\oneover{p} 	|u|^p +\frac{t}{2} |u|^2-	f u \right) \diff \x,
\label{obj:model-G}
\end{equation}
over the space $\HH=H^\alpha_\per(\Omega)\cap L^p(\Omega)$. 
It is well known that $\{e^{2\pi \fraki \mathbf{m} \cdot \mathbf{x}} \}_{\mathbf{m} \in \ZZ^2}$ is an orthonormal basis of $L^2_\per(\Omega)$. Then, $\HH$ can be equivalently defined via
\begin{equation}
\HH = \left\{v\in L^p_\per(\Omega) \middle| \sum_{\mathbf{m}\in \ZZ^2}|\mathbf{m}|^\alpha |\hat v_\mathbf{m}|^2 < \infty \right\}.
\nonumber 
\end{equation}

The existence and uniqueness of a weak solution to \eqref{model-pb} is guaranteed for any $f\in L^{p'}_\per(\Omega)$, where $1/p+1/p'=1$, since, in this case, the energy is well-defined, strictly convex, and coercive.

For the space $\HH$ to possess a Hilbert structure, a restriction on $p$ must be imposed depending on $\alpha$. For ease of notation, let $(\cdot,\cdot)$ and $\|\cdot\|$ denote the $L^2(\Omega)$--inner product and $L^2(\Omega)$--norm respectively. A natural inner product on $H^\alpha_{\per}(\Omega)$ is given by 
$
(v,w)_{H^\alpha_{\per}(\Omega)}=((-\lap)^\frac{\alpha}{2} v, (-\lap)^\frac{\alpha}{2} w)+(v,w)	,
$ 

and its associated norm by $\norm{v}{H^\alpha_{\per}(\Omega)}=\sqrt{ (v,v)_{H^\alpha_{\per}(\Omega)}}$. The following is a standard Sobolev embedding result. For a proof, see,  e.g., \citep[Theorem 7.34]{adams2003sobolev}.

	\begin{prop}[Sobolev embedding]
	\label{prop:SobEmb}
Let $\alpha\in(0,1]$. For all $p\in[2,p^*]$ with $p^*=\frac{2}{1-\alpha}$ if $\alpha<1$ or $p\in[2,\infty)$ if $\alpha=1$, there exists $C_{emb}=C_{emb}(p,\alpha)>0$ such that, for all $v\in H^\alpha_{\per}(\Omega)$, 
	\begin{align}
	\norm{v}{L^p(\Omega)}\le C_{emb}\norm{v}{H^\alpha_{\per}(\Omega)}.
	\label{est:emb}
	\end{align}		
	\end{prop}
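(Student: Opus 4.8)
\textbf{Proof proposal for Proposition~\ref{prop:SobEmb} (Sobolev embedding).} The statement is the classical periodic Sobolev embedding $H^\alpha_{\per}(\Omega)\emb L^p(\Omega)$ for $p\le 2/(1-\alpha)$ when $\alpha<1$, and $H^1_{\per}(\Omega)\emb L^p(\Omega)$ for all finite $p$ when $\alpha=1$, on the torus $\Omega=(0,1)^2$. Since the paper itself cites \citep[Theorem 7.34]{adams2003sobolev}, the "proof" is really just to reduce our setting to that reference, so the plan is to spell out that reduction cleanly.

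First I would observe that the Fourier-series definition of $(-\lap)^{\alpha/2}$ together with Parseval's identity gives the equivalence of $\norm{\cdot}{H^\alpha_{\per}(\Omega)}$ with the Bessel-potential norm $\bigl(\sum_{\bfm\in\ZZ^2}(1+4\pi^2|\bfm|^2)^\alpha|\hat v_\bfm|^2\bigr)^{1/2}$; this identifies $H^\alpha_{\per}(\Omega)$ with the periodic Bessel-potential space of order $\alpha$ in dimension $d=2$. Next I would invoke the standard embedding theory for Bessel-potential (equivalently, fractional Sobolev) spaces: for $0<\alpha<d/2$ one has the continuous embedding into $L^{p}$ for $2\le p\le 2d/(d-2\alpha)$, and the borderline/critical case $\alpha=d/2$ (here $d=2$, $\alpha=1$) embeds into every $L^p$ with $p<\infty$. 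Specializing $d=2$ gives exactly $p^*=2/(1-\alpha)$ for $\alpha<1$ and all finite $p$ for $\alpha=1$, matching the statement. The interpolation/monotonicity remark that $L^{p^*}\cap L^2\subset L^p$ for $2\le p\le p^*$ on a finite-measure domain then extends the estimate to the whole stated range with a constant depending only on $p,\alpha$ (and $|\Omega|=1$).

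Concretely the key steps, in order, are: (1) record the Parseval identity and the norm equivalence $\norm{v}{H^\alpha_{\per}(\Omega)}^2 \asymp \sum_\bfm (1+4\pi^2|\bfm|^2)^\alpha |\hat v_\bfm|^2$, so that membership in $H^\alpha_{\per}(\Omega)$ is intrinsic and coincides with the periodic fractional Sobolev space $W^{\alpha,2}_{\per}(\Omega)$ (or, equivalently, restrict the periodic function to a fundamental domain and compare with $W^{\alpha,2}$ on a bounded Lipschitz subdomain, using that the torus is a compact manifold without boundary so no extension issues arise); (2) apply the cited embedding theorem \citep[Theorem 7.34]{adams2003sobolev} at the endpoint $p=p^*$ (resp.\ any finite $p$ when $\alpha=1$) to get $\norm{v}{L^{p^*}(\Omega)}\le C\norm{v}{H^\alpha_{\per}(\Omega)}$; (3) for $2\le p<p^*$, use Hölder's inequality with the finite measure of $\Omega$ to interpolate between $L^2$ and $L^{p^*}$, yielding $\norm{v}{L^p(\Omega)}\le |\Omega|^{1/p-1/p^*}\norm{v}{L^{p^*}(\Omega)}^{1-\vartheta}\norm{v}{L^2(\Omega)}^{\vartheta}$ for the appropriate $\vartheta\in[0,1)$, and absorb both factors into $\norm{v}{H^\alpha_{\per}(\Omega)}$ since $\norm{v}{L^2(\Omega)}\le\norm{v}{H^\alpha_{\per}(\Omega)}$; (4) collect constants into a single $C_{emb}=C_{emb}(p,\alpha)$.

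The only genuine subtlety — and the step I would be most careful about — is matching the critical exponent: one must check that the Sobolev conjugate $2d/(d-2\alpha)$ with $d=2$ really equals $2/(1-\alpha)$, and that the critical case $\alpha=d/2=1$ is the one where the embedding holds for all finite $p$ but fails for $p=\infty$ (so the statement correctly writes $p\in[2,\infty)$, not $[2,\infty]$, when $\alpha=1$). There is no real analytic difficulty beyond quoting \citep[Theorem 7.34]{adams2003sobolev}; the periodic setting is if anything easier than the bounded-domain setting because the torus has no boundary, so compactly-supported smooth functions are dense and the Fourier characterization is exact. Hence I would keep the write-up to a few lines, essentially deferring to the reference as the authors do.
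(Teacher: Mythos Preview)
Your proposal is correct and takes essentially the same approach as the paper: the paper simply writes ``For a proof, see, e.g., \citep[Theorem 7.34]{adams2003sobolev}'' and offers no further argument, so your write-up is just a more detailed version of that deferral, spelling out the Fourier/Parseval identification of $H^\alpha_{\per}(\Omega)$ with the periodic Bessel-potential space and the check that the Sobolev conjugate $2d/(d-2\alpha)$ specializes to $2/(1-\alpha)$ when $d=2$. One minor slip: in step~(3) your H\"older/interpolation inequality conflates two different bounds (the power of $|\Omega|$ belongs to the pure H\"older estimate $\norm{v}{L^p}\le |\Omega|^{1/p-1/p^*}\norm{v}{L^{p^*}}$, not to the $L^2$--$L^{p^*}$ interpolation), but either route works and the conclusion is unaffected.
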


We introduce the preconditioner 
$ 
\LL u= (-\lap)^\alpha u +\nu u 	,
$ 

where $\nu \geq0$ is a free parameter, which induces a inner product  
\begin{align}
\pairing{\LL u}{v}=\int_\Omega \left((-\lap)^\frac{\alpha}{2} u (-\lap)^\frac{\alpha}{2} v + \nu uv \right) \diff \x.	
\label{obj:model-L}
\end{align}

\begin{rmk}[notation]
	As it is clear from its definition, the Lipschitz constant of $G'$ depends on the norm being used. Thus, we will make a difference between the case with preconditioner and without it. $\Lhat$ denotes the Lipschitz constant with respect to the preconditioner-induced norm $\norm{\, \cdot \, }{\LL}$, while $L$ is the constant with respect to the original norm $\norm{\, \cdot \, }{\HH}$.
\ermk\end{rmk}

We investigate the properties of $G$ in the following result.

\begin{prop}[properties of $G$]
	\label{prop:propertiesG}
	Let $G$ be given by \eqref{obj:model-G} and the preconditioner $\LL$ by \eqref{obj:model-L}. Then, $G$ is strongly convex with respect to $\LL$--norm. If, in addition, $p$ satisfies the conditions of Proposition~\ref{prop:SobEmb}, then $G$ is locally Lipschitz smooth with respect to $\LL$--norm.
\end{prop}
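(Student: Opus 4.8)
The goal is to verify two properties of the functional $G$ in \eqref{obj:model-G}: strong convexity and local Lipschitz smoothness, both with respect to the $\LL$--norm induced by \eqref{obj:model-L}. The plan is to reduce each property to the corresponding inequality on the nonlinear term $\int_\Omega \oneover{p}|u|^p \diff\x$, since the remaining part of $G$ is quadratic and its Hessian is precisely captured by $\LL$ up to the parameter shift between $\nu$ and $t$.

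First I would compute $G'$ explicitly: for $u,v\in\HH$,
\[
\pairing{G'(u)}{v}=\int_\Omega \left( (-\lap)^\frac{\alpha}{2} u\,(-\lap)^\frac{\alpha}{2} v + |u|^{p-2}u\,v + t\,uv - f v\right)\diff\x.
\]
Then, for the strong convexity, I would estimate $\pairing{G'(u)-G'(w)}{u-w}$ from below. The quadratic terms contribute exactly $\int_\Omega\left(|(-\lap)^\frac{\alpha}{2}(u-w)|^2 + t|u-w|^2\right)\diff\x$, and the monotone term $\int_\Omega(|u|^{p-2}u-|w|^{p-2}w)(u-w)\diff\x$ is nonnegative because $\xi\mapsto|\xi|^{p-2}\xi$ is monotone on $\RR$. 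Hence $\pairing{G'(u)-G'(w)}{u-w}\ge \int_\Omega\left(|(-\lap)^\frac{\alpha}{2}(u-w)|^2 + t|u-w|^2\right)\diff\x$. Comparing with $\norm{u-w}{\LL}^2=\int_\Omega\left(|(-\lap)^\frac{\alpha}{2}(u-w)|^2 + \nu|u-w|^2\right)\diff\x$, one gets strong convexity with constant $\mu=\min\{1, t/\nu\}$ if $\nu>0$ (and $\mu=1$ if $\nu=0$), which is strictly positive since $t>0$.

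For the local Lipschitz smoothness, fix a bounded convex set $B\subset\HH$, say $\norm{u}{\LL}\le M$ for $u\in B$; by norm equivalence this gives a bound $\norm{u}{H^\alpha_\per(\Omega)}\le C M$, and then by Proposition~\ref{prop:SobEmb}, $\norm{u}{L^p(\Omega)}\le C_{emb}CM=:K$ for all $u\in B$ (using the hypothesis on $p$). I would then bound $\pairing{G'(u)-G'(w)}{u-w}$ from above for $u,w\in B$: the quadratic part is $\le C_2\norm{u-w}{\LL}^2$ (with $C_2$ from \eqref{L-iprd-coerc}, or directly $\max\{1,t/\nu\}$), and the key work is the nonlinear term. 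Using the elementary pointwise inequality $\snrm{|a|^{p-2}a-|b|^{p-2}b}\le (p-1)(|a|^{p-2}+|b|^{p-2})|a-b|$ together with Hölder's inequality with exponents $\tfrac{p}{p-2},\tfrac{p}{1},\tfrac{p}{1}$ (interpreting the case $p=2$ trivially, where the term is simply $\int_\Omega|u-w|^2$), one estimates
\[
\int_\Omega\snrm{|u|^{p-2}u-|w|^{p-2}w}\,|u-w|\,\diff\x\le (p-1)\left(\norm{u}{L^p(\Omega)}^{p-2}+\norm{w}{L^p(\Omega)}^{p-2}\right)\norm{u-w}{L^p(\Omega)}^2\le 2(p-1)K^{p-2}\,C_{emb}^2C^2\,\norm{u-w}{\LL}^2,
\]
using \eqref{est:emb} and norm equivalence once more. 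Adding the two contributions yields \eqref{def:L-smooth} with a constant $L_B$ depending on $B$ through $M$ (hence through $K$), which is exactly local Lipschitz smoothness.

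The main obstacle is the nonlinear term in the smoothness estimate: one must pass from the pointwise growth bound on $\xi\mapsto|\xi|^{p-2}\xi$ to an $L^p$--based integral bound, and this is where the restriction on $p$ from Proposition~\ref{prop:SobEmb} is essential — it is precisely what guarantees the Sobolev embedding $H^\alpha_\per(\Omega)\emb L^p(\Omega)$ that converts the $\LL$--norm control on $B$ into an $L^p$ bound. Care is also needed to handle the borderline case $p=2$ (where the nonlinearity is linear and the whole functional is quadratic, so smoothness is global) separately from $p>2$, and to track that the constant genuinely depends only on $B$ and not on the individual points, which it does since $K$ is uniform over $B$.
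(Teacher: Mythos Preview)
Your proposal is correct and follows essentially the same architecture as the paper's proof: compute $G'$, show the nonlinear contribution is monotone for strong convexity (yielding $\mu=\min\{1,t/\nu\}$), and for local Lipschitz smoothness control the nonlinear term via a pointwise growth estimate, H\"older's inequality, and the Sobolev embedding applied to $\norm{u-w}{L^p}$.

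The one substantive difference is how the uniform $L^p$--bound on elements of the bounded set $B$ is obtained. You argue $\norm{\,\cdot\,}{\LL}$--bounded $\Rightarrow$ $H^\alpha_\per$--bounded (norm equivalence) $\Rightarrow$ $L^p$--bounded (Sobolev embedding). The paper instead uses that strong convexity implies coercivity of $G$, embeds $B$ in a sublevel set $\{G\le M_B\}$, and then reads the $L^p$--bound directly off the energy expression \eqref{obj:model-G} via Young's inequality on the $(f,v)$ term --- no embedding is needed for this step. Your route is shorter and more transparent here. The paper's route pays off later: it is exactly the argument recycled in the discrete analysis (Theorem~\ref{thm:prpty-G_N}), where extracting the $L^p_N$--bound from the discrete energy is what makes the stability hypothesis $\norm{f_N}{N,2}\le C\norm{f}{L^2}$ the right assumption and keeps the Lipschitz constant dimension-independent.
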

\begin{proof}
	First, the action of $G'$ is characterized by the following: for $v,w \in \HH$,
	\begin{align*}
	\pairing{G'(v)}{w}=((-\lap)^\frac{\alpha}{2} v, (-\lap)^\frac{\alpha}{2} w)+t(v,w)+(|v|^{p-2}v,w)-(f,w).
	\end{align*}
	Note also that the following estimates hold, which are a special case of \citep[Lemma 2.1]{barrett93}: for $p>1$, there exist $C_{p1}, C_{p2}>0$, which depend only on $p$, such that for all $\xi,\eta\in\mathbb{R}$,
	\begin{align}
	||\xi|^{p-2}\xi-|\eta|^{p-2}\eta|&\le C_{p1} |\xi-\eta|(|\xi|+|\eta|)^{p-2},
	\label{est:p-upper}
	\\
	(|\xi|^{p-2}\xi-|\eta|^{p-2}\eta)(\xi-\eta)&\ge C_{p2} |\xi-\eta|^2(|\xi|+|\eta|)^{p-2}.
	\label{est:p-lower}
	\end{align} 	
	Thus, using \eqref{est:p-lower}
	\begin{align*}
	\quad\pairing{G'(v)-G'(w)}{v-w}
	\ge \norm{(-\lap)^\frac{\alpha}{2} (v-w)}{}^2+t\norm{v-w}{}^2\ge \muhat \norm{v-w}{\LL}^2,
	\nonumber
	\end{align*}
	where
	\begin{equation}
	\muhat=\min\{1,t/\nu\}
	\label{eq:muhat}.
	\end{equation}
	Observe that this holds without referring to Sobolev embedding. Note also that this implies the coercivity of $G$ with respect to $\LL$--norm, that is, $\lim_{\norm{v}{\LL}\goesto\infty}{G(v)}=\infty$. 
	
	Next, thanks to coercivity, for any bounded, convex set $B\subset\HH$ there exists $M_B\in\RR$ such that $B \subset \left\{x\in\HH \ \middle| \ G(x)\le M_B \right\}$. Hence, for each $v\in B$, using Cauchy-Schwarz  inequality and Young's inequality, it follows that there exists $\varepsilon>0$ such that 
	\begin{align}
	M_B&\ge G(v)=\half \norm{(-\lap)^\frac{\alpha}{2} v}{}^2 +\oneover{p} \norm{v}{L^p(\Omega)}^p+\frac{t}{2}\norm{v}{}^2-(f,v)
	\nonumber
	\\
	&\ge \half \norm{(-\lap)^\frac{\alpha}{2} v}{}^2 +\oneover{p} \norm{v}{L^p(\Omega)}^p+\frac{t}{4}\norm{v}{}^2-\frac{1}{2\varepsilon} \norm{f}{}^2.
	\label{est:Lp-nrm-bd}	
	\end{align}
	Rearranging this, we see that there exists $C_{f,t,p,B}>0$ such that 
	\begin{align}
	\norm{v}{L^p(\Omega)}\le C_{f,t,p,B} \quad \forall v\in B.
	\label{est:B-Lp-engy-bd}
	\end{align}
	
	 On the other hand, using \eqref{est:p-upper}, H\"older's inequality, and \eqref{est:B-Lp-engy-bd}, we have, for all $v,w\in B$,
	\begin{align*}
	&\quad\pairing{G'(v)-G'(w)}{v-w}
	\nonumber
	\\
	&=\norm{(-\lap)^\frac{\alpha}{2} (v-w)}{}^2+t\norm{v-w}{}^2+(|v|^{p-2}v-|w|^{p-2}w,v-w)
	\\
	&\le \norm{(-\lap)^\frac{\alpha}{2} (v-w)}{}^2+t\norm{v-w}{}^2 +C_{p1}\int_\Omega |v-w|^2(|v|+|w|)^{p-2} \diff \x
	\\
	&\le \norm{(-\lap)^\frac{\alpha}{2} (v-w)}{}^2+t\norm{v-w}{}^2 +C_{p3} \|v-w\|^2_{L^p(\Omega)} \left(\|v\|_{L^p(\Omega)}^{p-2}+\|w\|_{L^p(\Omega)}^{p-2} \right)
	\\
	&\le \norm{(-\lap)^\frac{\alpha}{2} (v-w)}{}^2+t\norm{v-w}{}^2 +2C_{f,t,p,B}^{p-2} C_{p3}\|v-w\|^2_{L^p(\Omega)},
	\end{align*}
	where $C_{p3}>0$ is a constant reflecting the equivalence between $(|v|+|w|)^{p-2}$ and $|v|^{p-2}+|w|^{p-2}$.
	
	Finally, owing to the restriction on $p$, Proposition~\ref{prop:SobEmb} guarantees that
	\[
	\|v-w\|^2_{L^p(\Omega)} \leq C_{emb}^2 \| v - w \|_{H^\alpha_\per(\Omega)}^2,
	\]
	so that
	\[
	\pairing{G'(v)-G'(w)}{v-w} \le \Lhat_B\norm{(v-w)}{\LL}^2 ,
	\]	
	with $\Lhat_B=\max\{1, t/\nu, 2C_{f,t,p,B}^{p-2}C_{p3}C_{emb}^2\}$. 
\end{proof}

\begin{rmk}[strong Lipschitz smoothness]
The proof of Proposition~\ref{prop:propertiesG} can be easily modified to show that $G$ is locally Lipschitz smooth in the strong sense, i.e., \eqref{def:Lip-str} holds.
\ermk
\end{rmk}

\subsection{Discretization}
We discretize the model problem \eqref{model-pb} by introducing a uniform grid of points. To simplify the presentation, we choose $N\in\NN$ with $N=2K+1$ for some integer $K\ge 1$. (The details for the  case that $N$ is even are only slightly more complicated.) Define $h=1/N$, and introduce the grid domain
$ 
\gm= \left\{(x_\ell, y_m)\in[0,1]^2 \ \middle| \ x_\ell = \ell h, \ y_m = mh, \ 0\le \ell, m \le N \right\}.
$ 
For ease of notation, let us introduce
$ 
\mathbb{N}^2_{N} = \left\{ \bfm =(m_1,m_2) \in\ZZ^2 \ \middle| \ 1\le m_1,m_2\le N \right\}
$ and 
$
\ZZ^2_K = \left\{ \bfr =(r_1,r_2) \in\ZZ^2 \ \middle| \ -K\le r_1,r_2\le K\right\}
$, 
Then, for $\bfm \in \mathbb{N}^2_{N}$, we can denote $\x_\bfm=(x_{m_1},y_{m_2})\in \gm$. This notation must not be confused with that of the iterates of PAGD.
Define the space of periodic grid functions
\begin{align}
	\gridfn = \left\{v_N:\gm\goesto\RR \ \middle| \ v_N(0,hm)=v_N(hN,hm),\ v_N(h\ell,0)=v_N(h\ell,hN), \right. \quad& 
	\nonumber
	\\
	\left.0\leq m,\ell \leq N \right\},&
	\label{obj:grid-fn}
\end{align}
endowed with the $L^2_N$--inner product
$ 
	(v_N,w_N)_N = h^2 \sum_{\bfm\in \mathbb{N}^2_{N}} v_N(\x_\bfm)w_N(\x_\bfm).
$ 
More generally, for $p\ge 1$, define
$ 
\norm{w_N}{N,p}=\left( h^2\sum_{\bfm\in\mathbb{N}^2_{N}} |w_N(\x_\bfm)|^p \right)^\frac{1}{p} .	
$ 
Given $w_N\in\gridfn$, its \emph{discrete Fourier transform} (DFT) is 
\[ 
\hat w_K(\bfr)=h^2\sum_{\bfs\in\mathbb{N}^2_{N}} w_N(\x_\bfs) e^{-2\pi \fraki \bfr\cdot\x_\bfs},
\quad \bfr \in \ZZ^2_K.
\]
The \emph{discrete fractional Laplacian} $(-\lap_N)^\alpha :\gridfn\goesto\gridfn$ is defined by
\begin{equation}
[(-\lap_N)^\alpha w_N](\x_\bfm)=\sum_{\bfr\in\ZZ^2_K} (4\pi^2 |\bfr|^2)^\alpha \hat w_K(\bfr) e^{2\pi \fraki \bfr\cdot\x_{\bfm}}.
	\label{def:frac-Lap}
\end{equation}
Finally, for $v_N, w_N\in\HH_N$, the $H^\alpha_N$--inner product is given by
$	
\iprd{v_N}{w_N}_{H^\alpha_N} = (v_N,w_N)_N+((-\lap_N)^\frac{\alpha}{2}v_N, (-\lap_N)^\frac{\alpha}{2}w_N)_N,	
$	
and $\norm{w_N}{H^\alpha_N}=\sqrt{(w_N,w_N)_{H^\alpha_N}}$.

We comment that there are, at least, three different natural choices for the underlying inner product for $\HH_N$: the $L^2_N$--inner product, the $H_N^\alpha$--inner product, and the $\LL_N$--inner product, \eqref{def:dis-precon}. We will choose the first option, i.e., the $L^2_N$--inner product for several reasons. To begin with, this is the way numerical experiments are usually done if no special distinction is made between representers of the residual with respect to multiple inner products.  In addition, this illustrates the effect of preconditioning more vividly. For example, if we adopt $H^\alpha_N$--inner product, this leads to a preconditioned scheme in disguise: finding a representer of the residual with respect to this inner product is equivalent to using $\LL_N$--inner product \eqref{def:dis-precon} with $\nu_N=1$. On the other hand, $L^2_N$--inner product leads to truly \emph{non-preconditioned} schemes such as GD or AGD.

After having introduced all this notation, we can write our discrete problem as: given $f_N\in\gridfn$, find $u_N\in\gridfn$ such that 
\begin{equation}
(-\lap_N)^\alpha u_N	+|u_N|^{p-2}u_N +tu_N=f_N.
\label{eq:disc-PDE}
\end{equation}
In this problem, $f_N\in\gridfn$ is some approximation of the problem data $f$. For example, if $f$ is continuous, $f_N(\x_\bfm)=f(\x_\bfm)$ is a natural option, and if $f$ is only an $L^2(\Omega)-$function, then the sampling at the nodes of the $L^2(\Omega)-$projection of $f$ onto ${\mathcal P}_K$, the trigonometric polynomial of degree at most $K$, is natural although these two may not agree even if one starts with the same continuous function $f$. In fact, the difference between these two possibilities is very small if $f$ is smooth and its derivatives are periodic (see \citep[pp. 44---45]{canuto2007spectral}).

Our discrete problem has a similar energy structure to the continuous problem. It is the Euler-Lagrange equation of the following functional 
\begin{equation}
G_N(v_N) = \half\norm{(-\lap_N)^\frac{\alpha}{2} v_N}{N}^2+\oneover{p}\norm{v_N}{N,p}^2+\frac{t}{2}\norm{v_N}{N}^2-\iprd{f_N}{v_N}_{N}.
\label{obj:discrete-egy}
\end{equation}	

We introduce a (discrete) preconditioner
\begin{align}
\LL_N=(-\lap_N)^\alpha +\nu_N \identity_N,
\label{def:dis-precon}
\end{align}
where $\nu_N>0$ and $\identity_N:\gridfn\goesto\gridfn$ is the identity map. The parameter $\nu_N>0$ will be determined later. This preconditioner induces an inner product on $\gridfn$ given by
\begin{equation}
(v_N,w_N)_{\LL_N}= \nu_N (v_N,w_N)_{N}+\left((-\lap_N)^\frac{\alpha}{2} v_N,(-\lap_N)^\frac{\alpha}{2} w_N\right)_{N} ,
	\label{obj:disc-L-ip}
\end{equation}
and an associated norm $\norm{v_N}{\LL_N}=\sqrt{(v_N,v_N)_{\LL_N}}$. It is desirable that the convergence of our scheme does not deteriorate as we refine the grid points. We can ensure this under a certain restriction on $p$. The following proposition provides an important tool for that purpose.

\begin{prop}[discrete Sobolev embedding]
	\label{prop:dis-Sob-emb}
	Let $\alpha\in (0,1]$. For all $p\in[2, p^*]$ with $p^*=\frac{2}{1-\alpha}$ if $\alpha<1$ or for all $p\in[2,\infty)$ if $\alpha=1$, there exists a constant $C_{p, \alpha}>0$ such that, for all $v_N\in\HH_N$,
	\begin{align}
	\norm{v_N}{N,p}\le C_{p,\alpha}\norm{v_N}{H^\alpha_N}.
	\end{align}		
	$C_{p,\alpha}$ is independent of $v_N$ and $N$.	
\end{prop}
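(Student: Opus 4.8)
The plan is to prove the discrete Sobolev embedding by transferring the continuous embedding (Proposition~\ref{prop:SobEmb}) to the grid via the natural identification between periodic grid functions and trigonometric polynomials of degree at most $K$. The key observation is that the discrete Fourier transform sets up an isometry-type correspondence: for $w_N \in \gridfn$, let $\Pi_K w_N(\x) = \sum_{\bfr \in \ZZ^2_K} \hat w_K(\bfr) e^{2\pi \fraki \bfr \cdot \x}$ be its trigonometric interpolant in $\mathcal P_K$. Because the nodes $\{\x_\bfm\}_{\bfm \in \mathbb{N}^2_N}$ are exactly the points at which a degree-$\le K$ trigonometric polynomial is sampled without aliasing (this is where $N = 2K+1$ matters), the discrete $L^2_N$ and $H^\alpha_N$ norms of $w_N$ coincide \emph{exactly} with the continuous $L^2(\Omega)$ and $H^\alpha_\per(\Omega)$ norms of $\Pi_K w_N$; this follows from the exactness of the trapezoidal (rectangle) quadrature rule on $\Omega$ for trigonometric polynomials of degree $\le 2K$, together with Parseval's identity and the fact that $(-\lap_N)^{\alpha/2} w_N$ is the grid restriction of $(-\lap)^{\alpha/2}\Pi_K w_N$.

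First I would record this identification carefully: $\norm{w_N}{N} = \norm{\Pi_K w_N}{L^2(\Omega)}$ and $\norm{w_N}{H^\alpha_N} = \norm{\Pi_K w_N}{H^\alpha_\per(\Omega)}$, both via Parseval on $\ZZ^2_K$ and on $\ZZ^2$ (the coefficients outside $\ZZ^2_K$ vanish for $\Pi_K w_N$). Second, I would handle the left-hand side $\norm{w_N}{N,p}$, which is the discrete $\ell^p$-type norm and is \emph{not} in general equal to $\norm{\Pi_K w_N}{L^p(\Omega)}$ since the quadrature rule is not exact for $|\Pi_K w_N|^p$. Here one uses an inverse-type estimate or, more cleanly, the fact that $\norm{w_N}{N,p}$ is precisely the $L^p(\Omega)$ norm of the piecewise-constant (or, better, of a suitable simple-function) reconstruction; but the sharpest route is to bound $\norm{w_N}{N,p}$ by $\norm{\Pi_K w_N}{L^p(\Omega)}$ up to a constant independent of $N$. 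This can be done by quadrature/aliasing comparison: $|\Pi_K w_N|^{p}$ is not a trigonometric polynomial of low degree, but one can instead bound $\norm{w_N}{N,p} \le \norm{w_N}{N,\infty}^{1-2/p}\norm{w_N}{N}^{2/p}$ and also directly estimate $\norm{w_N}{N,\infty} = \norm{\Pi_K w_N}{L^\infty(\Omega)}$ (since the max over the grid is bounded by the max over $\Omega$, and conversely degree-$K$ trigonometric polynomials attain comparable values at grid points—though only the easy direction $\norm{w_N}{N,\infty} \le \norm{\Pi_K w_N}{L^\infty(\Omega)}$ is needed). Alternatively, and this is probably the cleanest, one invokes that the grid $L^p$ norm and the $L^p(\Omega)$ norm of $\Pi_K w_N$ are equivalent with $N$-independent constants, which is a known stability fact for the Fourier interpolation operator on a tensor-product grid (see \citep{canuto2007spectral}); granting that, $\norm{w_N}{N,p} \le C \norm{\Pi_K w_N}{L^p(\Omega)}$.

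Putting the pieces together: $\norm{w_N}{N,p} \le C\norm{\Pi_K w_N}{L^p(\Omega)} \le C\, C_{emb}\norm{\Pi_K w_N}{H^\alpha_\per(\Omega)} = C\, C_{emb}\norm{w_N}{H^\alpha_N}$, where $C_{emb}$ is the continuous embedding constant from Proposition~\ref{prop:SobEmb} under the stated restriction on $p$ versus $\alpha$, and $C$ is the $N$-independent interpolation stability constant. Setting $C_{p,\alpha} = C\, C_{emb}$ gives the claim, with the required independence of $N$ and of $v_N$.

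The main obstacle is the left-hand side: establishing $\norm{w_N}{N,p} \le C\norm{\Pi_K w_N}{L^p(\Omega)}$ with a constant $C$ that does \emph{not} degrade as $N \to \infty$. The $L^2$ and $H^\alpha$ identifications are exact and painless thanks to quadrature exactness, but the $L^p$ grid norm for $p > 2$ requires the stability of trigonometric interpolation in $L^p$, which is a genuine (though classical) Marcinkiewicz–Zygmund-type inequality for the tensor-product Fourier grid rather than a trivial manipulation. I would either cite this directly from the spectral-methods literature or, if a self-contained argument is wanted, reduce to the one-dimensional case and use the Dirichlet-kernel representation of $\Pi_K$ together with the uniform boundedness of the interpolation projector from $C(\mathbb{T})$ and from $L^\infty(\mathbb{T})$ combined with interpolation of operators—but for the purposes of this paper, invoking the known result is the economical choice.
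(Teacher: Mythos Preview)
Your proposal is correct and follows essentially the same approach as the paper: lift the grid function to its trigonometric interpolant, use Parseval to identify the discrete $H^\alpha_N$ norm with the continuous $H^\alpha_\per(\Omega)$ norm exactly, invoke the continuous Sobolev embedding, and close the loop with an $N$-independent bound $\norm{w_N}{N,p} \le C\norm{\Pi_K w_N}{L^p(\Omega)}$. The paper dispatches this last step by citing \citep[Lemma 2.48]{jovanovic2013analysis} rather than the Marcinkiewicz--Zygmund route you sketch, but the logical structure is identical.
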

\begin{proof}
		Note that $\norm{v_N}{N,p}\le C\norm{v}{L^p(\Omega)}$ for all $v_N\in\HH_N$; see, for instance, \citep[Lemma 2.48]{jovanovic2013analysis}, where $v$ is the unique trigonometric polynomial of degree less than or equal to $K$ interpolating $v_N$ and $C>0$ depends only on the dimension of $\Omega$.  Also, the following Parseval's identity holds in the fractional setting $
		\norm{v_N}{H^\alpha_N}=\norm{v}{H^\alpha_{\per}(\Omega)}
		$. 
		In conjunction with the Sobolev embedding at the continuous level, \eqref{est:emb}, we have, for any $v_N\in\HH_N$, 
	\[
	\norm{v_N}{N,p}\le C\norm{v}{L^p(\Omega)} \le CC_{p,\alpha}\norm{v}{H^\alpha_{\per}(\Omega)}=CC_{p,\alpha}\norm{v_N}{H^\alpha_N}.
	\qedhere
	\]
\end{proof}

The following result addresses dimension-independence of the (inverse) condition number as well as the structure of $G_N$ that is needed to apply the theory we have developed in Section~\ref{sec:PAGD-conv}. Note that, in the following statement, the sublevel sets of $G_N$ provide a compatible way to describe bounded, convex sets when we consider multiple resolutions since, strictly speaking, for different values of $N$, functions in $\HH_N$ may not be directly comparable. 

\begin{thm}[properties of $G_N$]
	\label{thm:prpty-G_N}
	Let the space of grid functions $\HH_N$ be given by \eqref{obj:grid-fn} and the preconditioner $\LL_N$ by \eqref{def:dis-precon}. Then, the energy functional $G_N:\HH_N\goesto\RR$ defined by \eqref{obj:discrete-egy} is strongly convex and locally Lipschitz smooth with respect to the $\LL_N$--norm. Moreover, the strong convexity constant $\hat \mu_N$ is independent of $N$. Suppose, in addition, that $p$ satisfies the conditions of Proposition~\ref{prop:dis-Sob-emb} and that $f_N$ is defined in a stable manner when we pose the discrete problem \eqref{eq:disc-PDE}, i.e., there exists $C>0$ independent of $N$ such that
	\[
    \norm{f_N}{N,2}\le C\norm{f}{L^2(\Omega)}.
	\]
 Then, the local Lipschitz smoothness constant $\hat L_N$ is also independent of $N$ in the sense that, for each $M\in\RR$, $G_N$ is $\hat L_N$--Lipschitz smooth on the sublevel set
 \[
   \left\{v_N\in\HH_N \ \middle| \ G(v_N)\le M \right\}
 \]
 with $\hat L_N$ independent of $N$. Consequently, the (inverse) condition number $\hat \mu_N/\hat L_N$ with respect to the $\LL_N$--norm stays away from 0 as $N\goesto\infty$.
\end{thm}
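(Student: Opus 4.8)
The plan is to mirror, at the discrete level, the arguments that established Proposition~\ref{prop:propertiesG} for the continuous energy $G$, but being careful to track the dependence of all constants on $N$ and to invoke the \emph{discrete} Sobolev embedding (Proposition~\ref{prop:dis-Sob-emb}) in place of the continuous one. First I would compute the action of $G_N'$: for $v_N, w_N \in \HH_N$,
\begin{align*}
\pairing{G_N'(v_N)}{w_N} = ((-\lap_N)^\frac{\alpha}{2} v_N, (-\lap_N)^\frac{\alpha}{2} w_N)_N + t(v_N,w_N)_N + (|v_N|^{p-2}v_N, w_N)_N - (f_N, w_N)_N,
\end{align*}
where the only nontrivial point is differentiating the discrete $\ell^p$ term $\tfrac1p \norm{v_N}{N,p}^2$; since this is a finite sum over grid nodes times $h^2$, the chain rule applies nodewise, and we use the pointwise estimates \eqref{est:p-upper}--\eqref{est:p-lower} (valid for real scalars, hence applicable nodewise). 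For strong convexity, \eqref{est:p-lower} gives $(|v_N|^{p-2}v_N - |w_N|^{p-2}w_N, v_N - w_N)_N \ge 0$, so
\begin{align*}
\pairing{G_N'(v_N) - G_N'(w_N)}{v_N - w_N} \ge \norm{(-\lap_N)^\frac{\alpha}{2}(v_N - w_N)}{N}^2 + t\norm{v_N - w_N}{N}^2 \ge \hat\mu_N \norm{v_N - w_N}{\LL_N}^2,
\end{align*}
with $\hat\mu_N = \min\{1, t/\nu_N\}$ from the definition \eqref{obj:disc-L-ip} of the $\LL_N$--inner product; this is manifestly independent of $N$ once $\nu_N$ is chosen independent of $N$ (which I will assume is the case, as indicated by the text), and it requires no embedding. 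This also yields discrete coercivity of $G_N$ with respect to the $\LL_N$--norm.

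For the local Lipschitz smoothness, the chain is: sublevel set control $\Rightarrow$ uniform $\norm{\cdot}{N,p}$ bound $\Rightarrow$ Hölder plus $(|v|+|w|)^{p-2} \sim |v|^{p-2} + |w|^{p-2}$ $\Rightarrow$ discrete Sobolev embedding. Concretely, on $\{v_N \mid G_N(v_N) \le M\}$, I complete the square on the quadratic terms against $(f_N, v_N)_N$ using Cauchy--Schwarz and Young, exactly as in \eqref{est:Lp-nrm-bd}, to get
\[
M \ge \tfrac12 \norm{(-\lap_N)^\frac{\alpha}{2} v_N}{N}^2 + \tfrac1p \norm{v_N}{N,p}^2 + \tfrac{t}{4}\norm{v_N}{N}^2 - \tfrac{1}{2\varepsilon}\norm{f_N}{N,2}^2,
\]
and here the stability hypothesis $\norm{f_N}{N,2} \le C\norm{f}{L^2(\Omega)}$ makes the right-hand side bound independent of $N$; rearranging gives $\norm{v_N}{N,p} \le C_{f,t,p,M}$ with $C_{f,t,p,M}$ independent of $N$. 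Then, as in the proof of Proposition~\ref{prop:propertiesG}, for $v_N, w_N$ in this sublevel set,
\begin{align*}
\pairing{G_N'(v_N) - G_N'(w_N)}{v_N - w_N}
&\le \norm{(-\lap_N)^\frac{\alpha}{2}(v_N - w_N)}{N}^2 + t\norm{v_N - w_N}{N}^2 + 2C_{f,t,p,M}^{p-2} C_{p3} \norm{v_N - w_N}{N,p}^2 \\
&\le \hat L_N \norm{v_N - w_N}{\LL_N}^2,
\end{align*}
where the last step uses Proposition~\ref{prop:dis-Sob-emb} ($\norm{v_N - w_N}{N,p}^2 \le C_{p,\alpha}^2 \norm{v_N - w_N}{H^\alpha_N}^2$, with $C_{p,\alpha}$ independent of $N$) and the equivalence of $\norm{\cdot}{H^\alpha_N}$ and $\norm{\cdot}{\LL_N}$ (again with $N$-independent constants, since $\nu_N$ is $N$-independent), giving $\hat L_N = \max\{1, t/\nu_N, 2C_{f,t,p,M}^{p-2} C_{p3} C_{p,\alpha}^2 C_{\mathrm{eq}}\}$ independent of $N$. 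Finally, $\hat\mu_N/\hat L_N$ is a ratio of two $N$-independent positive constants, so it stays bounded away from $0$ as $N \goesto \infty$, which is the last assertion.

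The main obstacle, and the only place where genuine care (as opposed to bookkeeping) is needed, is ensuring that \emph{every} constant in the chain above is truly $N$-independent — in particular the equivalence constants between $\norm{\cdot}{H^\alpha_N}$, $\norm{\cdot}{\LL_N}$, and $\norm{\cdot}{N,2}$, and the constant $C$ in the discrete-to-continuous interpolation estimate $\norm{v_N}{N,p} \le C\norm{v}{L^p(\Omega)}$ used inside Proposition~\ref{prop:dis-Sob-emb}. These are all available from standard pseudo-spectral interpolation theory and Parseval's identity in the fractional setting (as cited), but the argument genuinely breaks if one is sloppy about $\nu_N$: if $\nu_N$ were allowed to grow or decay with $N$, then $\hat\mu_N = \min\{1, t/\nu_N\}$ or the $\LL_N$-vs-$H^\alpha_N$ equivalence would degenerate. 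I would therefore state explicitly that $\nu_N$ is chosen bounded above and below independently of $N$ (the natural and intended choice), and note that the interpolation constant $C$ depends only on the spatial dimension, not on $K$ or $N$ — this is precisely the content of the cited discrete Sobolev lemma and is what transfers the continuous embedding \eqref{est:emb} to the grid uniformly in $N$.
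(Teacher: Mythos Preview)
Your proposal is correct and follows precisely the route the paper takes: the paper's own proof simply says the argument is ``parallel to that of Proposition~\ref{prop:propertiesG}'', with the stability of $\norm{f_N}{N,2}$ used to pass from the discrete analogue of \eqref{est:Lp-nrm-bd} to that of \eqref{est:B-Lp-engy-bd}, and with the embedding constant $C_{emb}$ replaced by the discrete constant $C_{p,\alpha}$ from Proposition~\ref{prop:dis-Sob-emb}. Your added remarks about the need for $\nu_N$ to be bounded above and below independently of $N$ are a useful explicit caveat that the paper leaves implicit.
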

\begin{proof}
	The proof of the strong convexity is parallel to that of Proposition \ref{prop:propertiesG}. The proof of local Lipschitz smoothness is also parallel, but we need the assumed stability of $\norm{f_N}{N,2}$ to proceed from \eqref{est:Lp-nrm-bd} to \eqref{est:B-Lp-engy-bd}. Finally, to complete the proof, we simply replace the embedding constant $C_{emb}$ with its discrete counterpart $C_{p,\alpha}$ as given in Proposition \ref{prop:dis-Sob-emb}.  
\end{proof}

Since the (inverse) condition number $\muhat_N/\Lhat_N$ governs the rate of convergence of PAGD (Corollary \ref{cor:PAGD-conv}), the previous theorem guarantees that one can achieve the same rate of convergence even if we refine the number of grid points $N\goesto\infty$. However, this is true in terms of the number of iterations, but the wall clock time will take longer as the refinement is conducted.

Being in finite dimensions, $G_N$ is also strongly convex, and locally Lipschitz smooth with respect to \emph{any norm}, for instance $\norm{\, \cdot \, }{N}$. The constants in this case, however, are different and depend on the dimension of $\gridfn$, which obviously depends on the number of grid points, and thus on $N$. We label them $\mu_N$ and $L_N$ to distinguish them from the dimension-independent constants $\muhat_N$ and $\Lhat_N$ respectively.

\subsection{A problem with a manufactured solution}
In this first experiment, we solve \eqref{eq:disc-PDE} by minimizing the energy \eqref{obj:discrete-egy}. To compute the errors and energies, the following manufactured solution is used
\[
u_N(\x_\bfm)= \exp\left(\sin 2\pi\left(x_{m_1}-\frac{1}{4} \right) +\sin 4\pi\left(y_{m_2}-\frac{3}{8}\right)\right).
\]

We set $\alpha=0.5$, $p=4$, $t=1$, $N=64$, and found, experimentally, that the values $\nu_N=1.2$, $\mu_N=1$ are optimal, while we set $\muhat_N=5/6=\min \{ 1, t/\nu_N\}$ in view of \eqref{eq:muhat}. 
To specify step sizes, recall the step size rules that theoretically guarantee convergence (see Section~\ref{sec:algorithms}):
 $s=2/(L_N+\mu_N)$ for GD, $s=1/L_N$ for AGD, $s=2/(\Lhat_N+\muhat_N)$  for PGD, and $s=1/\Lhat_N$ for PAGD. Step sizes are set by these relations with $L_N=500$ and $\Lhat_N=20$, which are also experimentally proved to be optimal. However, it must be noted that this is just a way of setting step sizes. We do not really know neither whether the values for $L_N$ or $\Lhat_N$ are the Lipschitz constants of the corresponding energy functionals nor whether the aforementioned step size rules give the optimal results even if we knew the Lipschitz constants. In fact, our last experiment suggests that larger step sizes than what is theoretically proven seem to work.
 
\begin{figure}
	\centering
	\subfloat[Objective, $G_N(x_k)-G_N(u_N)$, plot of GD, AGD, PGD, and PAGD.]{\label{figsub:known-all-egy} \includegraphics[width=0.5\linewidth]{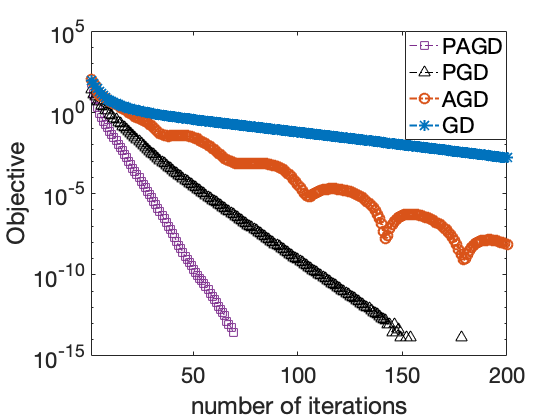}} 
\subfloat[$\LL_N$--norm of errors that are generated by GD, AGD, PGD, and PAGD.]{\label{figsub:known-all-err}\includegraphics[width=0.5\linewidth]{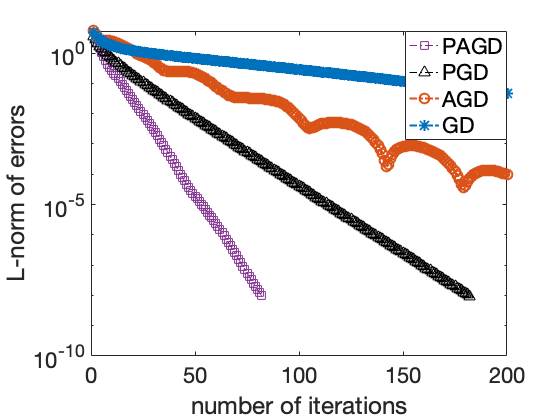}}
\\
	\subfloat[Potential, kinetic, and total energy plot of GD and AGD.]{\label{figsub:known-nonpre-egy} \includegraphics[width=0.5\linewidth]{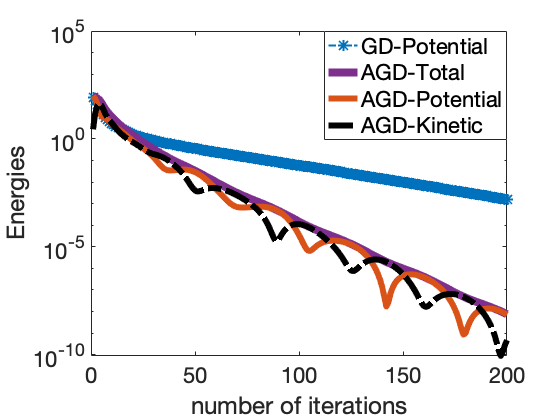}}
	\caption{Objective, error, and energy decay plots for GD, AGD, PGD, and PAGD. They are implemented to solve \eqref{eq:disc-PDE} by minimizing \eqref{obj:discrete-egy} ($\alpha=0.5$, $p=4$, $t=1$, $N=64$, $\nu_N=1.2$, $L_N=500$, $\mu_N=1$, $\Lhat_N=20$, $\muhat_N=5/6=\min \{ 1, t/\nu_N\}$, and step sizes are set via $s=2/(L_N+\mu_N)$ for GD, $s=1/L_N$ for AGD, $s=2/(\Lhat_N+\muhat_N)$  for PGD, and $s=1/\Lhat_N$ for PAGD). The vertical axes (logarithmic scale) show the value of the objective, $\LL_N$--norm of errors, or various energies while the horizontal axis (linear scale) shows the number of iterations.}
	\label{fig:Exp1}	
\end{figure}	

Figure~\ref{fig:Exp1} shows the performance of GD, AGD, PGD, and PAGD when used to solve \eqref{eq:disc-PDE} by minimizing \eqref{obj:discrete-egy},  where the data is as described above. The stopping criteria take the following parameters: the tolerance is $10^{-8}$, the upper tolerance is $10^{10}$, and the maximum number of iterations is 200.

Figure~\ref{fig:Exp1} \subref{figsub:known-all-egy} shows the decay of the objective, $G_N(x_k)-G_N(u_N)$ which is, up to a constant, the same as the decay of the potential energy, for all four schemes. Here $k$ is the number of iterations. 
Figure \ref{fig:Exp1} \subref{figsub:known-all-err} shows the decay of the $\LL_N$--norm of the errors. Notice that PAGD performs significantly better than all the other methods.   

Figure~\ref{fig:Exp1} \subref{figsub:known-nonpre-egy} shows the performance of GD and AGD. Since these schemes do not involve a preconditioner, the corresponding total energy is defined by
\begin{align}
E_N(x_k,v_k) = \oneover{\eta_N} (G_N(x_k)-G_N(u_N))+ \frac{\eta_N}{2} \norm{v_k-u_N}{N}^2,
\label{obj:disc-total-egy-non}
\end{align}
where $k$ is the number of iterations and $\eta_N=\sqrt{\mu_N}$. The first and second terms can be understood as potential and kinetic energy respectively.
Figure~\ref{fig:Exp1} \subref{figsub:known-nonpre-egy} shows the decay of various energies for nonpreconditioned schemes. This figure better illustrates our analysis of the previous section than the preconditioned ones since they converge slower. As expected, AGD performs substantially better than GD. The total energy of AGD decreases steadily and exponentially fast. Notice that the vertical axis is in logarithmic scale. This matches what is predicted by the theory in Theorem~\ref{thm:exp-conv}. Observe also that the potential and kinetic energies of AGD, by themselves, oscillate; see Remark~\ref{rmk:each-egy-osc}. The physical analogy for AGD described in Remark~\ref{rmk:ODE-phys-interp} is clear from this picture. A fraction of the potential energy is converted to kinetic energy and they fluctuate as the mechanical system converges to equilibrium.

\subsection{A problem where the solution is unknown}
In this second experiment we, again, solve \eqref{eq:disc-PDE} by minimizing the energy \eqref{obj:discrete-egy}. The discrete right hand side $f_N$ is given by
\begin{align}
f_N(\x_\bfm)=\exp\left( \sin 2\pi (x_{m_1}-0.25) +\sin 2\pi (y_{m_2}-0.25) \right).
\label{obj:forcing-exp}
\end{align}
The parameters of the PDE are set to $\alpha=0.5$, $p=10$, and $t=1$. Observe that for these values of $\alpha$ and $p$ we do not have that $H^\alpha_{\per}(\Omega)\emb L^p(\Omega)$. We found, experimentally, that the choice $\nu_N=0.9$ is optimal for the preconditioner. We also set $\mu_N=1$ and $\muhat_N=1=\min \{1, t/\nu_N\}$ in view of \eqref{eq:muhat} as before. Step sizes are set in the same way as in the previous experiment with $L_N=300$ or $3000$, and  $\Lhat_N=9$. The values of $\mu_N$, $L_N$, and $\Lhat_N$ were experimentally found to be optimal except for $L_N=3000$. That is, they yield the best convergence rate with all other parameters being fixed. A more detailed explanation about $L_N=3000$ will follow. Two different degrees of resolution are used to show the dimension dependence of nonpreconditioned schemes. The stopping criterion parameters are as before.

\begin{figure}
	\subfloat[$\infty$--norm of search direction ($N=64$, $L_N=300$).]{\label{subfig:L700-N64}\includegraphics[width=0.5\linewidth]{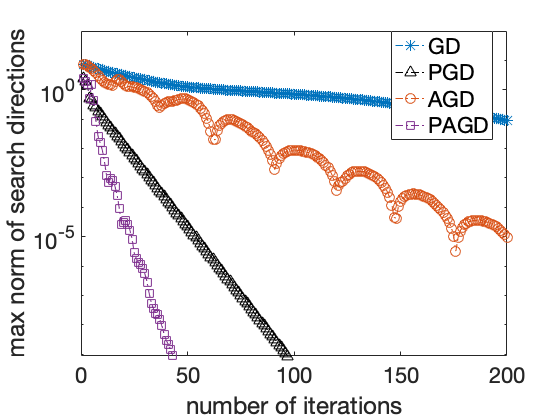}}
	\subfloat[$\infty$--norm of search direction ($N=512$, $L_N=300$).]{\label{subfig:L700-N512}\includegraphics[width=0.5\linewidth]{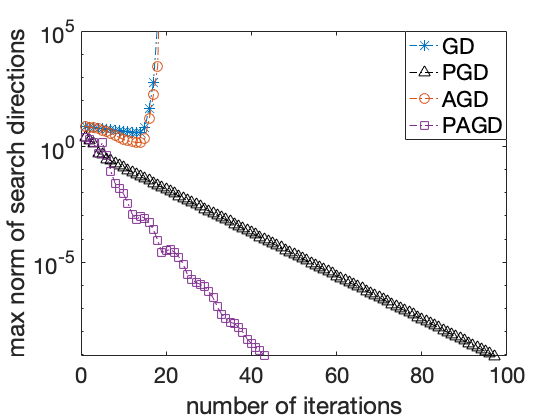}}
\\
	\subfloat[$\infty$--norm of search direction ($N=512$, $L_N=3000$).]{\label{subfig:L7000-N512}\includegraphics[width=0.5\linewidth]{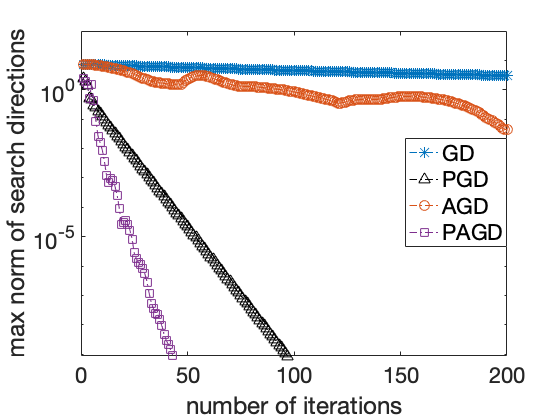}}
	\caption{$\infty$--norm plots of the search directions for GD, AGD, PGD, and PAGD. They are implemented to solve \eqref{eq:disc-PDE} by minimizing \eqref{obj:discrete-egy} with varying resolutions $N\in\{64, 512\}$ and varying step sizes for GD and AGD; $s=2/(L_N+\mu_N)$ for GD and $s=1/L_N$ for AGD with $L_N\in \{300, 3000\}$. The other parameters are set to $\alpha=0.5$, $p=10$, $t=1$, $\nu_N=0.9$, $\mu_N=1$, $\muhat_N=1=\min\{1, t/\nu_N\}$, $s=2/(\Lhat_N+\muhat_N)$  for PGD, and $s=1/\Lhat_N$ for PAGD with $\Lhat_N=9$. The horizontal axis (linear scale) represents the number of iterations. The vertical axis (logarithmic scale) represents $\infty$--norm of the search directions.}
	\label{fig:Exp2-maxnorm}
\end{figure}

Figure~\ref{fig:Exp2-maxnorm} shows the $\infty$--norm of the search directions for GD, AGD, PGD, and PAGD with varying degrees of resolution and with two different step sizes for GD and AGD, which are determined by the same step size rules as in the previous experiment with $L_N\in\{300,3000\}$. In Figure~\ref{fig:Exp2-maxnorm} \subref{subfig:L700-N64}, we observe a similar performance as in Figure~\ref{fig:Exp1}. Recall that we do not have Sobolev embedding. Thus, one can expect the Lipschitz constant $L_N$, hence the step size, to depend on the number of grid points. In fact, theory predicts that even $\Lhat_N$ depends on it. However, for $\Lhat_N$, such dependence is not observed within the range of $N$ that we have chosen. We see that the step size for convergence indeed depends on $N$ in Figure~\ref{fig:Exp2-maxnorm} \subref{subfig:L700-N512}. As we increase the resolution of the grid from $N=64$ to $N=512$, nonpreconditioned schemes become unstable. Figure~\ref{fig:Exp2-maxnorm} \subref{subfig:L7000-N512} shows that the stability of GD and AGD is recovered after $L_N$ is increased from $300$ to $3000$, which amounts to decreasing the step size to roughly a tenth of the old one. ($L_N=3000$ is not optimally chosen).

\begin{figure}
	\subfloat[Number of iteration to reach a tolerance ($L_N=300$, $\Lhat_N=9$).]{\label{subfig:L700}\includegraphics[width=0.5\linewidth]{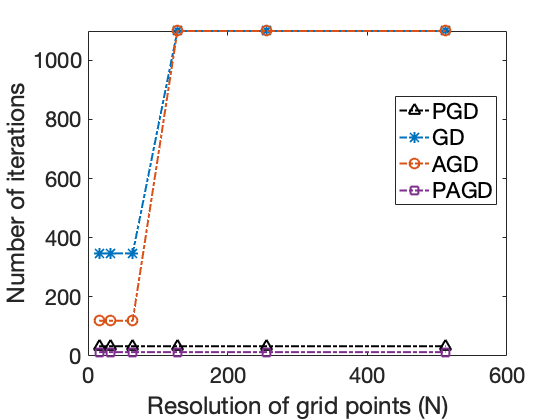}}
	\subfloat[Number of iteration to reach a tolerance ($L_N=3000$, $\Lhat_N=9$).]{\label{subfig:L7000}\includegraphics[width=0.5\linewidth]{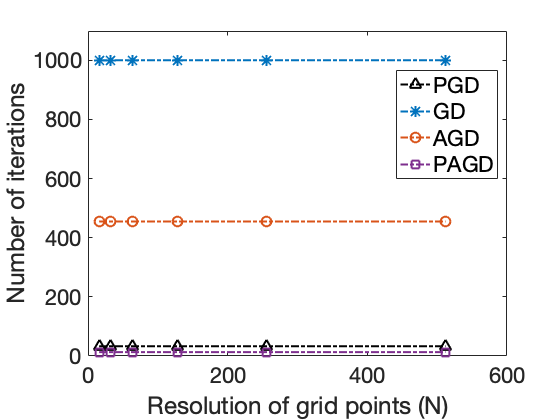}}
	
	\caption{Number of iterations for $\infty$--norm of the search directions to reach the tolerance $10^{-3}$ for GD, AGD, PGD, and PAGD. They are implemented to solve \eqref{eq:disc-PDE} by minimizing \eqref{obj:discrete-egy} with varying resolutions $N=16, 32, 64, 128, 256, 512$ and varying $L_N=300, 3000$ ($\alpha=0.5$, $p=10$, $t=1$, $\nu_N=0.9$, $\mu_N=1$, $L_N$ as indicated in the subfigures, $\muhat_N=1=\min\{1,t/\nu_N\}$, $\Lhat_N=9$). The horizontal axis represents the degrees of resolution, $N$. The vertical axis represents the minimum of the number of iterations for the $\infty$--norm of the search directions to reach the tolerance $10^{-3}$ (convergence) or 1000 iterations. The number of iterations being 1100 means that the $\infty$--norm of the search directions have reached the upper tolerance $10^8$ (blow up).}
	\label{fig:Exp2-N-dep}
\end{figure}

 Figure~\ref{fig:Exp2-N-dep} shows the dependence of $L_N$, hence the step size, on the number of grid points with the same experiment. However, here we use different tolerances and a different maximum number of iterations to best illustrate the dependence. For $N\in \{16, 32, 64, 128, 256, 512\}$, Figure~\ref{fig:Exp2-N-dep} records the number of iterations for $\infty$--norm of the search direction generated by each scheme to reach a tolerance $10^{-3}$  (``convergence'') or the maximum number of iterations, which is set to be 1000, if it does not reach the tolerance (``no convergence''). If the $\infty$--norm of the search direction reaches an upper tolerance $10^{8}$, the algorithm records the number of iteration taken as $1100$, which indicates ``blowing up.'' Figure~\ref{fig:Exp2-N-dep} \subref{subfig:L700} shows when the step sizes of the nonpreconditioned schemes correspond to $L_N=300$ and those of the preconditioned ones correspond to $\Lhat_N=9$. GD and AGD converge until $N=64$. However, they become unstable for $N\ge 128$. Figure~\ref{fig:Exp2-N-dep} \subref{subfig:L7000} shows the same experiment with smaller step sizes, which correspond to $L_N=3000$. In this case, we recover the stability of GD and AGD.

\subsection{A comparison between PGD and PAGD}
\label{sub:PGDvsPAGD}

\begin{table}[]
	\centering
	\begin{tabular}{c|c|c|c|c|c|c|}
		\cline{2-7}
		& \multicolumn{3}{c|}{PGD} & \multicolumn{3}{c|}{PAGD} \\ \hline
		\multicolumn{1}{|c|}{$\alpha$} & \# iterations   & $\nu_N$    & step size     & \# iterations   & $\nu_N$    & step size      \\ \hline
		\multicolumn{1}{|c|}{0.1}   & 64       & 1.0   & 0.20  & 38       & 0.9   & 0.14   \\ \hline
		\multicolumn{1}{|c|}{0.2}   & 50       & 1.1   & 0.25  & 32       & 1.0   & 0.18   \\ \hline
		\multicolumn{1}{|c|}{0.3}   & 39       & 1.2   & 0.31  & 29       & 1.1   & 0.22   \\ \hline
		\multicolumn{1}{|c|}{0.4}   & 29       & 2.6   & 0.57  & 26       & 1.2   & 0.26   \\ \hline
		\multicolumn{1}{|c|}{0.5}   & 22       & 2.8   & 0.66  & 24       & 1.3   & 0.30   \\ \hline
		\multicolumn{1}{|c|}{0.6}   & 16       & 4.1   & 0.97  & 20       & 5.5   & 0.83   \\ \hline
		\multicolumn{1}{|c|}{0.7}   & 13       & 3.4   & 0.90  & 17       & 5.2   & 0.91   \\ \hline
		\multicolumn{1}{|c|}{0.8}   & 11       & 4.6   & 1.04  & 15       & 4.2   & 0.88   \\ \hline
		\multicolumn{1}{|c|}{0.9}   & 12       & 3.8   & 0.89  & 12       & 5.0   & 0.96   \\ \hline
		\multicolumn{1}{|c|}{1.0}     & 10       & 4.0   & 0.95  & 12       & 4.3   & 0.92   \\ \hline
		\multicolumn{1}{|c|}{1.5}   & 9        & 4.5   & 0.97  & 11       & 4.5   & 0.97   \\ \hline
		\multicolumn{1}{|c|}{2.0}     & 8        & 4.8   & 1.03  & 10       & 4.5   & 0.96   \\ \hline
		\multicolumn{1}{|c|}{2.5}   & 8        & 4.1   & 0.88  & 9        & 4.2   & 0.90   \\ \hline
		\multicolumn{1}{|c|}{3.0}     & 8        & 4.1   & 0.88  & 9        & 4.2   & 0.90   \\ \hline
	\end{tabular}
	\caption{The minimal number of iterations needed for the $\infty$--norm of the search direction of PGD and PAGD to reach a tolerance of $10^{-9}$ and the values of $\nu_N$ and $s$ (step size) that led to the minimum iterations for a range of values of $\alpha$. They are implemented to solve \eqref{eq:disc-PDE} by minimizing the energy \eqref{obj:discrete-egy}. $N=64$, $\alpha\in\{0.1j\;|\;j=1,2,3,\cdots,10\} \cup\{1.5,2.0,2.5,3.0\}\subset(0,3]$, $p=6$, $t=1$, $\muhat_N=\min \{1, t/\nu_N\}$, and $f_N$ is given by \eqref{obj:forcing-exp}. For each value of $\alpha$, we consider $\nu_N\in\{0.1j\;|\;j=1,2,3,\cdots,100\} \subset (0,10]$ and $s\in\{0.01j\;|\;j=1,2,3,\cdots,200\} \subset (0,2]$. Among the possible 20,000 possible combinations of $\nu_N$ and $s$, we display the values that give the minimal number of iterations.}
	\label{tab:alpha-vs-iter}
\end{table}

In this final collection of experiments, we aim at comparing the performance of PGD and PAGD in different scenarios. To do so, we solve the discrete problem \eqref{eq:disc-PDE} by minimizing the energy \eqref{obj:discrete-egy} with the right hand side given by \eqref{obj:forcing-exp} as before. The problem parameters are set as $\alpha\in\{0.1j\;|\;j=1,2,3,\cdots,10\} \cup\{1.5,2.0,2.5,3.0\}\subset(0,3]$, $p=6$, and $t=1$. We set $N=64$ and $\muhat_N=\min \{1, t/\nu_N\}$. Then, for each value of $\alpha$ (column 1 of Table \ref{tab:alpha-vs-iter}), PGD and PAGD are applied with $\nu_N\in \{0.1j\;|\;j=1,2,3,\cdots,100\} \subset (0,10]$ and the step size $s\in\{0.01j\;|\;j=1,2,3,\cdots,200\} \subset(0,2]$. Observe that neither Algorithm \ref{alg:PGD} nor Algorithm \ref{alg:PAGD} require knowledge of $\Lhat_N$ a priori. Instead, we directly set the step size in this last experiment. Among these 20,000 possible values of $\nu_N$ and $s$, the minimal number of iterations for the $\infty$--norm of the search direction generated by PGD and PAGD to reach a tolerance of $10^{-9}$ (convergence) is recorded (column 2 and column 5 of Table \ref{tab:alpha-vs-iter}, respectively). A pair of values, $\nu_N$ and $s$, that led to the minimal number of iterations is also recorded (columns 3 and 4 of Table \ref{tab:alpha-vs-iter} for PGD and columns 6 and 7 of Table \ref{tab:alpha-vs-iter} for PAGD). There can be multiple such pairs. If this is the case, the pair $(\nu_N,s)$ that comes the first in the lexicographic order is recorded.

As we can see from Table \ref{tab:alpha-vs-iter}, for the nonlocal PDE \eqref{eq:disc-PDE} with small $\alpha$ ($\alpha=0.1, 0.2, 0.3, 0.4$), PAGD performs better than PGD when they are implemented with their own best pair of parameters $\nu_N$ and $s$ among those pairs that were considered. In particular, in the cases of $\alpha=0.1, 0.2, 0.3$, PAGD outperforms PGD while the best values of $\nu_N$ for the two schemes are similar, hence directly comparing their performances roughly make sense. An interesting thing, however, is that  one cannot say that PAGD is \emph{always} better than PGD. In fact, for the remaining values of $\alpha$, PGD takes fewer iterations to converge in the aforementioned sense than PAGD provided they are equipped with their ``best" parameters for each method. It must be noted that this result does not contradict our theory. The theory only tells us some upper bounds about the rate of convergence of the two schemes within a certain range of step size when they involve the same preconditioner. It does not explain what happens outside of that. The result provided here perhaps illustrates the latter case. In any case, we can see an improvement in the convergence of PAGD compared to PGD for ``harder" problems (\eqref{eq:disc-PDE} with small $\alpha$), where a stronger nonlocality is involved.

	\section*{Acknowledgments}
SMW acknowledges partial financial support from NSF-DMS 1719854.
AJS has been partially supported by NSF-DMS 1720123.

	\bibliographystyle{plain}
\bibliography{Preconditioned_Nesterov}

\begin{thebibliography}{31}
\providecommand{\natexlab}[1]{#1}
\providecommand{\url}[1]{\texttt{#1}}
\expandafter\ifx\csname urlstyle\endcsname\relax
  \providecommand{\doi}[1]{doi: #1}\else
  \providecommand{\doi}{doi: \begingroup \urlstyle{rm}\Url}\fi

\bibitem[Adams and Fournier(2003)]{adams2003sobolev}
R.~A. Adams and J.~J.~F. Fournier.
\newblock \emph{Sobolev spaces}, volume 140 of \emph{Pure and Applied
  Mathematics (Amsterdam)}.
\newblock Elsevier/Academic Press, Amsterdam, second edition, 2003.
\newblock ISBN 0-12-044143-8.

\bibitem[Allen-Zhu and Orecchia(2017)]{allen2014linear}
Z.~Allen-Zhu and L.~Orecchia.
\newblock Linear coupling: an ultimate unification of gradient and mirror
  descent.
\newblock In \emph{8th {I}nnovations in {T}heoretical {C}omputer {S}cience
  {C}onference}, volume~67 of \emph{LIPIcs. Leibniz Int. Proc. Inform.}, pages
  Art. No. 3, 22. Schloss Dagstuhl. Leibniz-Zent. Inform., Wadern, 2017.

\bibitem[Attouch et~al.(2000)Attouch, Goudou, and Redont]{Attouch2000}
H.~Attouch, X.~Goudou, and P.~Redont.
\newblock The heavy ball with friction method. {I}. {T}he continuous dynamical
  system: global exploration of the local minima of a real-valued function by
  asymptotic analysis of a dissipative dynamical system.
\newblock \emph{Commun. Contemp. Math.}, 2\penalty0 (1):\penalty0 1--34, 2000.
\newblock ISSN 0219-1997.
\newblock \doi{10.1142/S0219199700000025}.
\newblock URL \url{https://doi.org/10.1142/S0219199700000025}.

\bibitem[Barrett and Liu(1993)]{barrett93}
J.~W. Barrett and W.~B. Liu.
\newblock Finite element approximation of the {$p$}-{L}aplacian.
\newblock \emph{Math. Comp.}, 61\penalty0 (204):\penalty0 523--537, 1993.
\newblock ISSN 0025-5718.
\newblock \doi{10.2307/2153239}.
\newblock URL \url{https://doi.org/10.2307/2153239}.

\bibitem[Beck(2017)]{beck2017ch8}
A.~Beck.
\newblock \emph{First-order methods in optimization}, volume~25 of
  \emph{MOS-SIAM Series on Optimization}.
\newblock Society for Industrial and Applied Mathematics (SIAM), Philadelphia,
  PA; Mathematical Optimization Society, Philadelphia, PA, 2017.
\newblock ISBN 978-1-611974-98-0.
\newblock \doi{10.1137/1.9781611974997.ch1}.
\newblock URL \url{https://doi.org/10.1137/1.9781611974997.ch1}.

\bibitem[Benyamin et~al.(2020)Benyamin, Calder, Sundaramoorthi, and
  Yezzi]{Benyamin2018}
M.~Benyamin, J.~Calder, G.~Sundaramoorthi, and A.~Yezzi.
\newblock Accelerated variational {PDE}s for efficient solution of regularized
  inversion problems.
\newblock \emph{J. Math. Imaging Vision}, 62\penalty0 (1):\penalty0 10--36,
  2020.
\newblock ISSN 0924-9907.
\newblock \doi{10.1007/s10851-019-00910-2}.
\newblock URL \url{https://doi.org/10.1007/s10851-019-00910-2}.

\bibitem[Bertsekas(1999)]{bertsekas1999nonlinear}
D.~P. Bertsekas.
\newblock \emph{Nonlinear programming}.
\newblock Athena Scientific Optimization and Computation Series. Athena
  Scientific, Belmont, MA, second edition, 1999.
\newblock ISBN 1-886529-00-0.

\bibitem[Boyd and Vandenberghe(2004)]{boyd2004convex}
S.~Boyd and L.~Vandenberghe.
\newblock \emph{Convex optimization}.
\newblock Cambridge University Press, Cambridge, 2004.
\newblock ISBN 0-521-83378-7.
\newblock \doi{10.1017/CBO9780511804441}.
\newblock URL \url{https://doi.org/10.1017/CBO9780511804441}.

\bibitem[Calder and Yezzi(2019)]{Calder2019}
J.~Calder and A.~Yezzi.
\newblock P{DE} acceleration: a convergence rate analysis and applications to
  obstacle problems.
\newblock \emph{Res. Math. Sci.}, 6\penalty0 (4):\penalty0 Paper No. 35, 30,
  2019.
\newblock ISSN 2522-0144.
\newblock \doi{10.1007/s40687-019-0197-x}.
\newblock URL \url{https://doi.org/10.1007/s40687-019-0197-x}.

\bibitem[Canuto et~al.(2006)Canuto, Hussaini, Quarteroni, and
  Zang]{canuto2007spectral}
C.~Canuto, M.~Y. Hussaini, A.~Quarteroni, and T.~A. Zang.
\newblock \emph{Spectral methods}.
\newblock Scientific Computation. Springer-Verlag, Berlin, 2006.
\newblock ISBN 978-3-540-30725-9; 3-540-30725-7.
\newblock Fundamentals in single domains.

\bibitem[Chen et~al.(2020)Chen, Hu, and Wise]{Chen2018Convergence}
L.~Chen, X.~Hu, and S.~M. Wise.
\newblock Convergence analysis of the fast subspace descent method for convex
  optimization problems.
\newblock \emph{Math. Comp.}, 89\penalty0 (325):\penalty0 2249--2282, 2020.
\newblock ISSN 0025-5718.
\newblock \doi{10.1090/mcom/3526}.
\newblock URL \url{https://doi.org/10.1090/mcom/3526}.

\bibitem[Ciarlet(1989)]{ciarlet89}
P.~G. Ciarlet.
\newblock \emph{Introduction to numerical linear algebra and optimisation}.
\newblock Cambridge Texts in Applied Mathematics. Cambridge University Press,
  Cambridge, 1989.
\newblock ISBN 0-521-32788-1; 0-521-33984-7.
\newblock With the assistance of Bernadette Miara and Jean-Marie Thomas,
  Translated from the French by A. Buttigieg.

\bibitem[Ciarlet(2013)]{ciarlet_fa}
P.~G. Ciarlet.
\newblock \emph{Linear and nonlinear functional analysis with applications}.
\newblock Society for Industrial and Applied Mathematics, Philadelphia, PA,
  2013.
\newblock ISBN 978-1-611972-58-0.

\bibitem[Evans(2010)]{evans2010partial}
L.~C. Evans.
\newblock \emph{Partial differential equations}, volume~19 of \emph{Graduate
  Studies in Mathematics}.
\newblock American Mathematical Society, Providence, RI, second edition, 2010.
\newblock ISBN 978-0-8218-4974-3.
\newblock \doi{10.1090/gsm/019}.
\newblock URL \url{https://doi.org/10.1090/gsm/019}.

\bibitem[Feng et~al.(2017)Feng, Salgado, Wang, and
  Wise]{feng2017preconditioned}
W.~Feng, A.~J. Salgado, C.~Wang, and S.~M. Wise.
\newblock Preconditioned steepest descent methods for some nonlinear elliptic
  equations involving p-{L}aplacian terms.
\newblock \emph{J. Comput. Phys.}, 334:\penalty0 45--67, 2017.
\newblock ISSN 0021-9991.
\newblock \doi{10.1016/j.jcp.2016.12.046}.
\newblock URL \url{https://doi.org/10.1016/j.jcp.2016.12.046}.

\bibitem[Feng et~al.(2018)Feng, Guan, Lowengrub, Wang, Wise, and
  Chen]{Wise2018FCH}
W.~Feng, Z.~Guan, J.~Lowengrub, C.~Wang, S.~M. Wise, and Y.~Chen.
\newblock A uniquely solvable, energy stable numerical scheme for the
  functionalized {C}ahn-{H}illiard equation and its convergence analysis.
\newblock \emph{J. Sci. Comput.}, 76\penalty0 (3):\penalty0 1938--1967, 2018.
\newblock ISSN 0885-7474.
\newblock \doi{10.1007/s10915-018-0690-1}.
\newblock URL \url{https://doi.org/10.1007/s10915-018-0690-1}.

\bibitem[Goudou and Munier(2009)]{Goudou2009}
X.~Goudou and J.~Munier.
\newblock The gradient and heavy ball with friction dynamical systems: the
  quasiconvex case.
\newblock \emph{Math. Program.}, 116\penalty0 (1-2, Ser. B):\penalty0 173--191,
  2009.
\newblock ISSN 0025-5610.
\newblock \doi{10.1007/s10107-007-0109-5}.
\newblock URL \url{https://doi.org/10.1007/s10107-007-0109-5}.

\bibitem[Jovanovi\'{c} and S\"{u}li(2014)]{jovanovic2013analysis}
B.~S. Jovanovi\'{c} and E.~S\"{u}li.
\newblock \emph{Analysis of finite difference schemes}, volume~46 of
  \emph{Springer Series in Computational Mathematics}.
\newblock Springer, London, 2014.
\newblock ISBN 978-1-4471-5459-4; 978-1-4471-5460-0.
\newblock \doi{10.1007/978-1-4471-5460-0}.
\newblock URL \url{https://doi.org/10.1007/978-1-4471-5460-0}.
\newblock For linear partial differential equations with generalized solutions.

\bibitem[Laborde and Oberman(2020)]{Laborde2020}
M.~Laborde and A.~Oberman.
\newblock A {L}yapunov analysis for accelerated gradient methods: from
  deterministic to stochastic case.
\newblock In Silvia Chiappa and Roberto Calandra, editors, \emph{Proceedings of
  the Twenty Third International Conference on Artificial Intelligence and
  Statistics}, volume 108 of \emph{Proceedings of Machine Learning Research},
  pages 602--612, Online, 26--28 Aug 2020. PMLR.
\newblock URL \url{http://proceedings.mlr.press/v108/laborde20a.html}.

\bibitem[Luo and Chen(2020)]{Luo2019}
H.~Luo and L.~Chen.
\newblock From differential equation solvers to accelerated first-order methods
  for convex optimization, 2020.
\newblock arXiv:1909.03145.

\bibitem[Nesterov(1983)]{nesterov1983}
Yu.~E. Nesterov.
\newblock A method for solving the convex programming problem with convergence
  rate {$O(1/k^{2})$}.
\newblock \emph{Dokl. Akad. Nauk SSSR}, 269\penalty0 (3):\penalty0 543--547,
  1983.
\newblock ISSN 0002-3264.

\bibitem[Nesterov(2004)]{nesterov_2014}
Yu.~E. Nesterov.
\newblock \emph{Introductory lectures on convex optimization}, volume~87 of
  \emph{Applied Optimization}.
\newblock Kluwer Academic Publishers, Boston, MA, 2004.
\newblock ISBN 1-4020-7553-7.
\newblock \doi{10.1007/978-1-4419-8853-9}.
\newblock URL \url{https://doi.org/10.1007/978-1-4419-8853-9}.
\newblock A basic course.

\bibitem[Phelps(1993)]{Phelps1993convexfn}
R.~R. Phelps.
\newblock \emph{Convex functions, monotone operators and differentiability},
  volume 1364 of \emph{Lecture Notes in Mathematics}.
\newblock Springer-Verlag, Berlin, second edition, 1993.
\newblock ISBN 3-540-56715-1.

\bibitem[Poljak(1964)]{polyak1964}
B.~T. Poljak.
\newblock Some methods of speeding up the convergence of iterative methods.
\newblock \emph{\v{Z}. Vy\v{c}isl. Mat i Mat. Fiz.}, 4:\penalty0 791--803,
  1964.
\newblock ISSN 0044-4669.

\bibitem[Schaeffer and Hou(2016)]{Schaeffer2016}
H.~Schaeffer and T.~Y. Hou.
\newblock An accelerated method for nonlinear elliptic {PDE}.
\newblock \emph{J. Sci. Comput.}, 69\penalty0 (2):\penalty0 556--580, 2016.
\newblock ISSN 0885-7474.
\newblock \doi{10.1007/s10915-016-0215-8}.
\newblock URL \url{https://doi.org/10.1007/s10915-016-0215-8}.

\bibitem[Shen et~al.(2011)Shen, Tang, and Wang]{MR2867779}
J.~Shen, T.~Tang, and L.-L. Wang.
\newblock \emph{Spectral methods}, volume~41 of \emph{Springer Series in
  Computational Mathematics}.
\newblock Springer, Heidelberg, 2011.
\newblock ISBN 978-3-540-71040-0.
\newblock \doi{10.1007/978-3-540-71041-7}.
\newblock URL \url{https://doi.org/10.1007/978-3-540-71041-7}.
\newblock Algorithms, analysis and applications.

\bibitem[Shi et~al.(2018)Shi, Du, Jordan, and Su]{Shi2018}
B.~Shi, S.~S. Du, M.~I. Jordan, and W.~J. Su.
\newblock Understanding the acceleration phenomenon via high-resolution
  differential equations, 2018.
\newblock arXiv:1810.08907.

\bibitem[Siegel(2019)]{Siegel2019}
J.~W. Siegel.
\newblock Accelerated first-order methods: Differential equations and
  {L}yapunov functions, 2019.
\newblock arXiv:1903.05671.

\bibitem[Su et~al.(2014)Su, Boyd, and Candes]{su}
W.~Su, S.~Boyd, and E.~Candes.
\newblock A differential equation for modeling {N}esterov’s accelerated
  gradient method: {T}heory and insights.
\newblock In Z.~Ghahramani, M.~Welling, C.~Cortes, N.~D. Lawrence, and K.~Q.
  Weinberger, editors, \emph{Advances in Neural Information Processing Systems
  27}, pages 2510--2518. Curran Associates, Inc., 2014.

\bibitem[Wibisono et~al.(2016)Wibisono, Wilson, and Jordan]{wibisono2016}
A.~Wibisono, A.~C. Wilson, and M.~I. Jordan.
\newblock A variational perspective on accelerated methods in optimization.
\newblock \emph{Proc. Natl. Acad. Sci. USA}, 113\penalty0 (47):\penalty0
  E7351--E7358, 2016.
\newblock ISSN 0027-8424.
\newblock \doi{10.1073/pnas.1614734113}.
\newblock URL \url{https://doi.org/10.1073/pnas.1614734113}.

\bibitem[Wilson et~al.(2018)Wilson, Recht, and Jordan]{Wilson2018}
A.~C. Wilson, B.~Recht, and M.~I. Jordan.
\newblock A {L}yapunov analysis of momentum methods in optimization, 2018.
\newblock arXiv:611.02635.

\end{thebibliography}

	\appendix

	\section{An IVP as the Limit of the PAGD Method}
	\label{app-PAGD-to-IVP}	

\subsection{Derivation of the ODE}
\label{sub:DeriveODE}

Let us start with the same approach as in \citep{su}. We assume, as an \emph{ansatz}, that PAGD is a discretization of an ODE, which has a solution $X:[0,\infty)\goesto \HH$, which we often call a \emph{trajectory}. We also assume that $X$ is smooth enough, e.g., twice continuously differentiable in time. 
For a fixed $t\in(0,\infty)$, the assumed smoothness on $X$, together with the identification $t= \sz k$ and Taylor's formula in a normed vector space (e.g., \citep[Theorem 7.9-1]{ciarlet_fa}) implies:
\begin{align}
\frac{x_{k+1}-x_k}{\sz}&=\dot{X}(t)+\half \ddot{X}(t)\sz+\littleo{\sz}	\quad \text{ as } s\goesto 0,
\nonumber
\\
\frac{x_{k}-x_{k-1}}{\sz}&=\dot{X}(t)-\half \ddot{X}(t)\sz+\littleo{\sz} \quad \text{ as } s\goesto 0,
\nonumber
\\	
\sz \LLinv G'(y_k)&=\sz \LLinv G'(X(t))+\littleo{\sz} \quad \text{ as } s\goesto 0.
\label{G'-taylor}
\end{align}
The last identity follows from the continuity of $G'$, that of $\LLinv$, and \eqref{PAGD-y-upd}, from which we can deduce $y_k\goesto X(t)$ as $s\goesto 0$.
Plugging \eqref{PAGD-y-upd} into \eqref{PAGD-x-upd} and dividing by $\sz$, we have
$ 
\frac{x_{k+1}-x_k}{\sz}-	\lambda\frac{x_{k}-x_{k-1}}{\sz}+\sz \LLinv G'(y_k)=0
$. 
Substituting the above Taylor expansions, and then rearranging, we arrive at
\begin{align}
\half(1+\lambda) \ddot{X}(t) +\frac{1-\lambda}{\sz}\dot{X}(t)+ \LLinv G'(X(t))+\littleo{1}=0 \quad \text{ as } s\goesto 0.
\label{ODE-ansatz}
\end{align}

To make this estimate consistent, interpret $\lambda$ as a function of $s$ and further assume that $(1-\lambda)/\sz\goesto 2\eta$ as $\sz\goesto 0$ for some $\eta\in(0,\infty)$, which yields

\begin{equation}
\ddot{X}(t)+2\eta\dot{X}(t)+\LLinv G'(X(t))=0.
\label{the-ODE}
\end{equation}

\subsection{Derivation of the initial conditions}
\label{sub:ICs}

The initialization $y_0 = x_0$ and \eqref{PAGD-x-upd} with $k=0$ imply
\[
\frac{x_1-x_0}{\sz}=\sz \LLinv G'(x_0)	.
\]
Take the limit $s\goesto0$ and conclude $\dot{X}(0)=0$ since $G'$ and $\dot X$ are assumed to be continuous. Therefore, we arrive at the desired IVP \eqref{the-IVP}.

\begin{rmk}[momentum method] 
	\label{rmk:MM}
	A similar procedure can be carried out far more easily for the so-called \emph{momentum method} (MM). To see this, we recall that
	\[
	\ddot X(t)\approx\frac{x_{k+1}-2x_k+x_{k-1}}{s},
	\quad
	\dot X(t)\approx\frac{x_k-x_{k-1}}{\sz},
	\quad
	G'(X(t))\approx G'(x_k) .
	\]
	Then, the discrete version of the ODE \eqref{the-IVP} becomes
	\[
	x_{k+1}=x_k-sG'(x_k)+(1-2\eta \sz)(x_k-x_{k-1}),
	\]
	which is MM with the weight $1-2\eta \sz$; see \citep[p. 12 (9)]{polyak1964}. This weight is close to $\lambda$:
	\[
	\lambda=\frac{1-\eta\sz}{1+\eta\sz}=1-\frac{2\eta\sz}{1+\eta\sz}\approx 1-2\eta\sz.
	\] 	
	In this sense, MM seems more natural and amenable for analysis than AGD.
	\ermk\end{rmk}

The limiting behavior of MM can also be explained by the IVP \eqref{the-IVP}. Observe that the only essential difference between MM and PAGD is where $G'$ is evaluated, that is, $x_k$ and $y_k$ respectively. And in the limit $s\goesto0$, $x_k$ and $y_k$ are not distinguishable in this setting. However, PAGD exhibits less oscillation than MM since evaluating $G'$ at $y_k$ serves as ``foreseeing'' the uphill of the objective functional, if exists, along the trajectory and ``steering'' to avoid unnecessary oscillating behaviors. 	Recently, a higher order Taylor expansion turns out to help differentiate their performaces (see \citep{Shi2018}).

\section{PAGD as a discretization of the IVP}
\label{subsec:discretization}
Let us label the step size $\sz$, rather than $s$, in order to make the setting more in line with the PAGD algorithm.
Again, it is helpful to have in mind the correspondence: time $t\longleftrightarrow k\sz$ ($k=0,1,2,\cdots$) and position $X(t)\longleftrightarrow x_k$.  First, we will see $y_k$ corresponds to a ``drifted" position without the potential landscape over $[t,t+\sz]$. This can be modeled by $\ddot X(t)+2\eta \dot X(t)=0$, which leads to another energy law 
$
\half \norm{\dot X(t+\sz)}{\LL}^2	= \half \norm{\dot X(t)}{\LL}^2 -2\eta\int_t^{t+\sz}\norm{\dot X(\tau)}{\LL}^2\diff \tau.
$ 
Approximate the speed in the integrand by the average $\frac12(\norm{\dot X(t+\sz)}{\LL}+\norm{\dot X(t)}{\LL})$, then after a short calculation, one obtains $\norm{\dot X(t+\sz)}{\LL}=\lambda\norm{\dot X(t)}{\LL}$. Since the dynamics takes place in a single direction, this implies $\dot X(t+\sz)=\lambda\dot X(t)$. The approximations $\dot X(t)\approx\frac{x_k-x_{k-1}}{\sz}$ and $\dot X(t+\sz)=\frac{y_k-x_k}{\sz}$ lead us to \eqref{PAGD-y-upd}.

Next, we discretize the vector $V(t)$. Since we do not know the minimizer in practice, we remove it from the definition of $v_k$ and discretize $V(t)+x^*=X(t)+\frac{1}{\eta}\dot X(t)$. The approximations $X(t)\approx y_k$ and $\dot X(t)\approx \frac{y_k-x_k}{\sz}$ suggest 
\begin{equation}
	v_k = y_{k}	+\frac{1}{\theta}(y_k-x_{k}),
	\label{xyv-rel2}
\end{equation} 
which leads to the definition of $\{v_k\}_{k\geq1}$ \eqref{PAGD-v-upd} upon combining with the definition of $\{y_k\}$.

Finally, to get the main iterates, $\{x_{k}\}_{k\geq1}$, we discretize \eqref{theV-ODE} using the approximations $\dot V (t) \approx \frac{v_{k+1}-v_k}{\sz}$, $\dot X (t) \approx \frac{y_{k}-x_k}{\sz}$, and the evaluation of $G'$ at $y_k$, then it follows
$
\eta \frac{v_{k+1}-v_k}{\sz}+\eta\frac{y_k-x_k}{\sz}+\LLinv G'(y_k)=0
$.
Plugging in \eqref{PAGD-v-upd} and \eqref{xyv-rel2}, one obtains \eqref{PAGD-x-upd}, the definition of $\{x_k\}_{k\geq1}$.

\section{Literature comparison}
\label{sec:FancyTable}

We summarize our discussion on the existing literature works, and contrast them with our contributions, in Table \ref{tab:literature}.

\begin{center}
\begin{table}[!ht]
	\begin{tabular}{|c|c|c|c|c|c|c|c|}
		\hline
		Ref.& $s=0$ & $s>0$ & $L$&$\mathfrak{B}$ & $\mathcal L$ &Numerics &
		\begin{tabular}[c]{@{}c@{}}Numerical\\Analysis\end{tabular} 
\\ \hline
		\cite{Wilson2018} & Opt. & Opt. & Glob. & 
		\cellcolor[HTML]{FFCE93} $\times$ &
		\cellcolor[HTML]{FFCE93}\begin{tabular}[c]{@{}c@{}}Bregman\end{tabular} &
		$\times$ &
		\cellcolor[HTML]{FFCE93} $\times$
\\ \hline
		\cite{Attouch2000} &
		\cellcolor[HTML]{CBCEFB}$\times$ &
		\cellcolor[HTML]{CBCEFB}$\times$ &
		\cellcolor[HTML]{CBCEFB}Loc. &
		\cellcolor[HTML]{FFCE93}$\times$ &
		\cellcolor[HTML]{FFCE93}$\times$ &
		\checkmark &
		\cellcolor[HTML]{FFCE93}$\times$
\\ \hline
		\cite{Laborde2020} & Opt. & Opt. & Glob. &
		\cellcolor[HTML]{FFCE93}$\times$ &
		\cellcolor[HTML]{FFCE93}$\times$ &
		$\times$ &
		\cellcolor[HTML]{FFCE93}$\times$
\\ \hline
		\cite{Schaeffer2016} & $\times$ & $\times$ & Glob. & 
		\cellcolor[HTML]{FFCE93}$\times$ &
		\cellcolor[HTML]{FFCE93}$\times$ &
		\checkmark &
		\cellcolor[HTML]{FFCE93}$\times$
\\ \hline
		\cite{Goudou2009} & 
		\cellcolor[HTML]{CBCEFB}$\times$ &
		\cellcolor[HTML]{CBCEFB}$\times$ &
		\cellcolor[HTML]{CBCEFB}Loc. &
		\cellcolor[HTML]{FFCE93}$\times$ &
		\cellcolor[HTML]{FFCE93}$\times$ &
		$\times$ &
		\cellcolor[HTML]{FFCE93}$\times$
\\ \hline
		\cite{Calder2019}&
		Sub. & $\times$ & Glob. &
		\cellcolor[HTML]{FFCE93}$\times$ &
		\cellcolor[HTML]{FFCE93}$\times$ &
		\checkmark &
		\cellcolor[HTML]{FFCE93}$\times$
\\ \hline
		\cite{Luo2019}&
		Opt. &
		Opt. &
		Glob. &
		\cellcolor[HTML]{FFCE93}$\times$ &
		\cellcolor[HTML]{FFCE93}$\times$ &
		$\times$ &
		\cellcolor[HTML]{FFCE93}$\times$
\\ \hline
		\cite{Siegel2019}&
		Opt. &
		Opt. &
		Glob. &
		\cellcolor[HTML]{FFCE93}$\times$ &
		\cellcolor[HTML]{FFCE93}$\times$ &
		$\times$ &
		\cellcolor[HTML]{FFCE93}$\times$
\\ \hline
		\cite{Shi2018}&
		Sub. &
		Sub. &
		Glob. &
		\cellcolor[HTML]{FFCE93}$\times$ &
		\cellcolor[HTML]{FFCE93}$\times$ &
		\checkmark &
		\cellcolor[HTML]{FFCE93}$\times$
\\ \hline
		Ours &
		\cellcolor[HTML]{CBCEFB}Opt. &
		\cellcolor[HTML]{CBCEFB}Opt. &
		\cellcolor[HTML]{CBCEFB}Loc. &
		\cellcolor[HTML]{FFCE93}\checkmark &
		\cellcolor[HTML]{FFCE93}\checkmark &
		\checkmark &
		\cellcolor[HTML]{FFCE93}\checkmark
\\ \hline
	\end{tabular}
\caption{A comparison of recent works from a numerical PDE point of view. All works that provide convergence rates either in the continuous (column $s=0$) or discrete (column $s>0$) case assume the global Lipschitz condition (column $L$). No work addresses invariant sets (column $\mathfrak B$), incorporates preconditioning explicitly (column $\mathcal L$), nor it explains how concrete numerical examples fit the abstract theory (column Numerical Analysis).}
\label{tab:literature}
\end{table}
\end{center}

\end{document}